\theoremstyle{definition}
\newtheorem{thm}{Theorem}[section]
\newtheorem{lem}[thm]{Lemma}
\newtheorem{prop}[thm]{Proposition}
\newtheorem{defn}[thm]{Defninition}
\newtheorem{rem}[thm]{Remark}
\newtheorem{note}[thm]{Note}
\title{Geometric mean of probability measures and geodesics of Fisher information metric}
\author{Mitsuhiro Itoh\footnote{Institute of Mathematics, University of Tsukuba,
1-1-1 Tennodai, Tsukuba-shi, Ibaraki 305-8577, JAPAN\quad e-mail : itohm@math.tsukuba.ac.jp} and Hiroyasu Satoh\footnote{Liberal Arts and Sciences, Nippon Institute of Technology, 4-1 Gakuendai, Miyashiro-machi, Minamisaitama-gun, Saitama 345-8501 JAPAN\quad e-mail : hiroyasu@nit.ac.jp}}
\date{\today}
\begin{document}
\maketitle

\begin{abstract}
The space of all probability measures having positive density function on a connected compact smooth manifold $M$, denoted by $\mathcal{P}(M)$, carries the Fisher information metric $G$.
We define the geometric mean of probability measures by the aid of which we investigate information geometry of $\mathcal{P}(M)$, equipped with $G$.
We show that a geodesic segment joining arbitrary probability measures $\mu_1$ and $\mu_2$ is expressed by using the normalized geometric mean of its endpoints.
As an application, we show that any two points of $\mathcal{P}(M)$ can be joined by a unique geodesic.
Moreover, we prove that the function $\ell$ defined by $\ell(\mu_1, \mu_2):=2\arccos\int_M \sqrt{p_1\,p_2}\,d\lambda$, $\mu_i=p_i\,\lambda$, $i=1,2$ gives the Riemannian distance function on $\mathcal{P}(M)$.
It is shown that geodesics are all minimal.
\end{abstract}

\section{Introduction}

For positive numbers $a$ and $b$, $\sqrt{a\,b}$ is called the geometric mean of $a$ and $b$.
The geometric mean of probability measures is similarly defined as follows; for two probability measures of density functions $p_1$ and $p_2$, we define their geometric mean by $\sqrt{p_1\,p_2}$.
By normalizing it, we obtain a probability measure.

In this paper we study, from a viewpoint of the normalized geometric mean, information geometry of the space $\mathcal{P}(M)$ of probability measures on a manifold $M$, which is equipped with Fisher information metric $G$.
By the aid of the normalized geometric mean,
we give a formula describing geodesic segments and then exhibit an exact form of the distance function for the space of probability measures with respect to the metric $G$.

Let $M$ be a connected, compact smooth manifold with a smooth probability measure $\lambda$.
Let $\mathcal{P}(M)$ be the space of probability measures on $M$ which are absolutely continuous with respect to the measure $\lambda$ and have positive continuous density function:
\begin{equation}
\mathcal{P}(M) = \left\{ \mu \left\vert\, \mu\, {\rm is\, a\, measure\, on}\, M, \int_M d\mu=1, \mu \ll \lambda, \frac{d\mu}{d\lambda}\in C^0_+(M) \right.\right\}.
\end{equation}
Here $d\mu/d\lambda$ is the Radon-Nikodym derivative of $\mu$ with respect to $\lambda$ and $C^0_+(M)$ denotes the set of all positive continuous functions on $M$.
The geometric mean of $\mu_1= p_1\, \lambda$, $\mu_2= p_2\, \lambda \in \mathcal{P}(M)$ is defined by $\sqrt{p_1\,p_2}\,\lambda$.
By normalizing the geometric mean, we give the definition of the normalized geometric mean.

\begin{defn}\label{normalizedgeometricmean}
The normalized geometric mean is a map $\varphi : \mathcal{P}(M)\times \mathcal{P}(M) \rightarrow \mathcal{P}(M)$ defined by
\begin{equation}
\varphi(\mu_1,\mu_2) = \left(\int_{x\in M} \sqrt{\frac{d\mu_2}{d\mu_1}(x)}\ d\mu_1(x) \right)^{-1}\sqrt{\frac{d\mu_2}{d\mu_1}}\, \mu_1.
\end{equation}
\end{defn}

We remark that $\displaystyle{\sqrt{\frac{d\mu_2}{d\mu_1}}\, \mu_1 = \sqrt{p_1\ p_2}\hspace{0.5mm}\lambda }$ for $\mu_i= p_i\,\lambda$, $i=1,2$ and then $\varphi(\mu_1,\mu_2) = \varphi(\mu_2,\mu_1)$, $\varphi(\mu,\mu)=\mu$.

\begin{defn}\label{ellfunction}
Let $\ell : \mathcal{P}(M)\times \mathcal{P}(M) \rightarrow [0,\pi)$ be a function defined by
\begin{equation}\label{distancefunction}
\ell(\mu_1,\mu_2)
= 2\, {\rm arccos}\left(\int_{x\in M}\sqrt{\frac{d\mu_2}{d\mu_1}(x)}\ d\mu_1(x) \right).
\end{equation}
\end{defn}

The aim of this paper is to present geometric characterization of the map $\varphi$ and the function $\ell$ from information geometry of $\mathcal{P}(M)$.

We mention here the informations which are closely related to $\varphi$ and $\ell$.
The integration
$$
C_H(\mu_1, \mu_2):
=\int_{x\in M} \sqrt{\frac{d\mu_1}{d\lambda}(x)}\sqrt{\frac{d\mu_2}{d\lambda}(x)}\,d\lambda(x)
=\int_{x\in M} \sqrt{\frac{d\mu_2}{d\mu_1}(x)}\,d\mu_1(x)
$$
is called the \textit{Hellinger integral} or the \textit{Hellinger coefficient}, representing the amount that measures the separation of two probability measures.
The function $\ell$ defined at Definition  \ref{ellfunction} is then expressed as $\ell(\mu_1,\mu_2)=2\arccos C_H(\mu_1, \mu_2)$. 

The information given by
$$
d_H(\mu_1, \mu_2):=\left\{\int_M\left(\sqrt{\frac{d\mu_1}{d\lambda}}-\sqrt{\frac{d\mu_1}{d\lambda}} \right)^2d\lambda \right\}^{1/2}=2(1-C_H(\mu_1, \mu_2)),
$$
called the Hellinger distance \cite{Nik-EOM},
is characterized as the square of the $0$-divergence (see \cite[p.58]{Amari2016}).

The function $\ell$ provides a Riemannian distance function with respect to a certain Riemannian metric, Fisher information metric, as stated in Theorem \ref{maincor}.

We regard the space $\mathcal{P}(M)$ as an infinite dimensional manifold whose tangent space $T_{\mu}\mathcal{P}(M)$ at $\mu\in \mathcal{P}(M)$ is identified with the vector space
\begin{equation}
\left\{\tau\, \left\vert\, \mbox{$\tau$ is a signed measure on $M$},\, \int_M d\tau = 0, \frac{d\tau}{d\mu}\in C^0(M)\right.\right\}.
\end{equation}
T. Friedrich \cite{F1991} defines for each $\mu\in\mathcal{P}(M)$ an inner product $G_{\mu}$ of $\tau_1, \tau_2\in T_{\mu}\mathcal{P}(M)$ by
\begin{equation}\label{FisherIM}
G_{\mu}(\tau_1,\tau_2) = \int_M\ \frac{d\tau_1}{d\mu}\, \frac{d\tau_2}{d\mu}\,d\mu
\end{equation}
which is a natural extension of the Fisher information matrix for a statistical model in mathematical statistics and information theory (see \cite{AN2000}).
We call the map $\mu \mapsto G_{\mu}$ the {\it Fisher information metric} on $\mathcal{P}(M)$.
The metric $G$ is invariant under the push-forward transformation of probability measures as easily observed (see \cite[Satz 1]{F1991}).
 Namely, any homeomorphism of $M$ is an isometry with respect to the metric $G$ via the push-forward transformation of $\mathcal{P}(M)$.
Remark that the group of homeomorphisms of a compact manifold $M$ acts on $\mathcal{P}(M)$ transitively via the push-forward, that is, for any $\mu\in\mathcal{P}(M)$ there exists a homeomorphism $\Phi$ of $M$ such that $\Phi_{\sharp}\lambda= \mu$. Here $\Phi_{\sharp}$ means the push-forward.
Refer for this to \cite{Fa, OxUl}.
This fact tells us that the space $\mathcal{P}(M)$ consisting of probability measures of continuous density function admits a structure of a Riemannian homogeneous space.
Refer to \cite{Baueretal} for the uniqueness of the Fisher metric on the space of probability measures having smooth density function   under push-forward invariance of diffeomorphisms. Notice the space of probability measures of smooth density function is a dense subset of $\mathcal{P}(M)$.

An embedding $\rho : \mathcal{P}(M) \rightarrow L_2(M,\lambda);\, \mu = p \lambda \mapsto \sqrt{p}$ provides the space $\mathcal{P}(M)$ an $L_2$-topology.
Here $L_2(M,\lambda)$ is the $L_2$-space of integrable functions on $M$ of finite norm $\|\cdot\|_{L_2}$, where the norm is defined by $\| f\|_{L_2} = \left(\int_M \vert f\vert^2 d \lambda\right)^{1/2}$. Then, $\mathcal{P}(M)$ is embedded onto the subset $\rho(\mathcal{P}(M)) \subset \{f\in L_2(M,\lambda)\,|\, \| f\|_{L_2}=1\}$ of $L_2(M,\lambda)$.
We equip each $\mu=p \lambda\in \mathcal{P}(M)$ with an $\varepsilon$-neighborhood of $\mu$ in the $\|\sqrt{\cdot}-\sqrt{\cdot}\|_{L_2}$-topology as $\{ \mu'= p' \lambda\in \mathcal{P}(M) \, | \, \| \sqrt{p'} - \sqrt{p}\,\|_{L_2} < \varepsilon\}$ for $\varepsilon >0$.
Notice that $\mathcal{P}(M)$ admits also the $C^0$-topology with the norm $\| p\|_{C^0} := {\rm sup}_{x\in M} \vert p(x)\vert$ for $\mu = p \lambda$.
However, in this paper we employ mainly the $\|\sqrt{\cdot}-\sqrt{\cdot}\|_{L_2}$--topology.
The map $\varphi$  and the function $\ell$ are  continuous with respect to the product topology of $\mathcal{P}(M)\times \mathcal{P}(M)$ induced from $\|\sqrt{\cdot}-\sqrt{\cdot}\|_{L_2}$-topology, as shown in section \ref{continuity}.
See section \ref{relevanttopology} for an appropriate smooth structure on $\mathcal{P}(M)$, given in \cite{Pistone}.
The tangent space $T_{\mu}\mathcal{P}(M)$ is an infinite dimensional vector space with the inner product $G_{\mu}$. The vector space $T_{\mu}\mathcal{P}(M)$ is not a Hilbert space, since the completion of the space $C^0(M)$ is not itself $C^0(M)$ so that $\mathcal{P}(M)$ is not a Riemannian-Hilbert manifold.
Remark that the pullback of the $L_2$-inner product $(\cdot,\cdot)_{L_2}$, given by $(f, f_1)_{L_2}=\int_{x\in M} f(x) f_1(x) d\lambda(x)$,
via $\rho$ coincides with $\frac{1}{4}G(\cdot,\cdot)$.

\begin{rem}
The compactness of the manifold $M$ is assumed throughout this paper. When $M$ is non-compact,  the argument appeared in this paper is almost valid if a minor change is done,  as that $\mathcal{P}(M)$ is the space of all probability measures $\mu=p(x)\lambda$, $\mu \ll \lambda$ such that $\mu$  is connected with $\lambda$ by an open mixture arc (for the notion of open mixture arc see subsection \ref{openmixturearc} and \cite{CPistone, Santacrocexx}) with $p=p(x)\in C^0_+(M)$.
Then the $\|\sqrt{\cdot}-\sqrt{\cdot}\|_{L_2}$--topology is introduced on $\mathcal{P}(M)$, same as in the compact manifold case.
The tangent space $T_{\mu}\mathcal{P}(M)$ is the vector space of measures $\tau= q(x)\lambda$ of $q\in C^0(M)$ such that  there exists an $\varepsilon>0$ for which $\mu+ t\tau=(p+t q)\lambda$ defines a probability measure in $\mathcal{P}(M)$ for any $t\in (-\varepsilon,\varepsilon)$.

Sections \ref{geodesic} and \ref{continuity} may be valid even for a non-compact manifold $M$. 
We will give in future a relevant study about non-compact manifold case.
\end{rem}

Let $\nabla$ be the Levi-Civita connection of the metric $G$.
Then $\nabla$ is given by
\begin{equation}\label{LCconnFI}
\nabla_{\tau_1}\tau_2(\mu)
=-\frac{1}{2}\left(
\frac{d\tau_1}{d\mu}\frac{d\tau_2}{d\mu}-\int_M\frac{d\tau_1}{d\mu}\frac{d\tau_2}{d\mu}d\mu
\right)\mu
\end{equation}
for any $\tau_1, \tau_2\in T_{\mu}\mathcal{P}(M)$ (see \cite[p.276]{F1991}).
T. Friedrich computes the Riemannian curvature tensor of $G$ by using \eqref{LCconnFI} and shows that the space $\mathcal{P}(M)$ equipped with the metric $G$ has constant sectional curvature $+1/4$ (\cite[Satz 2]{F1991}).
He also obtains an explicit formula for a curve in $\mathcal{P}(M)$ to be geodesic with respect to $G$ for a given initial data. 
In fact, let $\gamma : I \rightarrow \mathcal{P}(M)$($I\subset \mathbb{R}$ is an open interval, $0\in I$) be a geodesic, parametrized by arc-length with an initial data: $\gamma(0)= p_0 \lambda$, $\dot{\gamma}(0) = \dot{p}_0\lambda$ of $\vert \dot{\gamma}(0)\vert_{\mu} = 1$.
Then the density function $p_t = p_t(x)$ of $\gamma(t)$ with respect to $\lambda$ has the form
\begin{equation}\label{formulageodesic}
p_t(x) = \frac{1}{1+\tan^2(\frac{t}{2})}\left\{p_0(x)+ 2 \tan \left(\frac{t}{2}\right) \dot{p}_0(x) + \tan^2 \left(\frac{t}{2}\right)\, \frac{\dot{p}_0^2(x)}{p_0(x)}\right\}.
\end{equation}
From this formula any geodesic of $\mathcal{P}(M)$ is seen to be periodic with period $2\pi$. It is true that $\gamma(t) = p_t \lambda$ is indeed a probability measure for any $t$.
However, it is not determined from \eqref{formulageodesic} whether $\gamma(t)= p_t \lambda$ belongs to $\mathcal{P}(M)$.
It is also not mentioned in \cite{F1991} whether $p_t\in C^0_+(M)$ at any $t$ for which $\gamma(t)$ is defined. However, this is completely solved for a geodesic segment, by the aids of the density free expression for geodesic together with the notion of normalized geometric mean.
 
Every geodesic is incomplete, as we see from \eqref{formulageodesic} $\gamma(\pm \pi) \not\in \mathcal{P}(M)$, because $\gamma(t)$ at $t=\pm\pi$ has the form $\displaystyle \gamma(\pm\pi) = \left(\frac{\dot{p}_0(x)}{p_0(x)} \right)^2\ \lambda$ and $\displaystyle \int_M \dot{p}_0(x)\,d\lambda(x) = 0 $ so that the continuous function $\dot{p}_0(x)$ admits necessarily a zero in $M$.
It is of interest whether an interval $I \subset (-\pi,\pi)$, on which the geodesic $\gamma$ is defined, can be extended to a maximal one.
 
We emphasize that by relaxing the continuity of density function for probability measures, the situation for geodesics is drastically changed, as will be seen in Proposition \ref{ellfinite}, for example, uniqueness of geodesic segment for given endpoints collapses.

In \cite{IS2015} we obtain from \eqref{formulageodesic} a density free description of a geodesic in $\mathcal{P}(M)$ by the aid of which we derive an explicit formula representing a geodesic segment $\gamma(t)$ for given two endpoints $\mu, \mu_1\in\mathcal{P}(M)$.
By using the normalized geometric mean, we obtain the following theorem stating uniqueness and existence of geodesic segment.

\begin{thm}\label{main1}
Let $\mu, \mu_1\in\mathcal{P}(M)$ be arbitrary distinct probability measures.
Then, there exists a unique geodesic $\gamma(t)$ with respect to $G$ parametrized by arc-length, joining $\mu$ and $\mu_1$, and being expressed in the form
\begin{equation}\label{expression-g}
\gamma(t)=a_1(t)\,\mu+a_2(t)\,\mu_1+a_3(t)\,\varphi(\mu, \mu_1),
\qquad t\in [0, l].
\end{equation}
Here $\gamma(l)=\mu_1$, $l=\ell(\mu, \mu_1)$ and $a_i(t)$, $i=1,2,3$ are the non-negative functions of $t$ satisfying
\begin{equation*}
a_1(t)+a_2(t)+a_3(t)=1,
\end{equation*}
which are given by
\begin{equation*}\label{geodesic3}
a_1(t)=\left(\frac{\sin(l-t)/2}{\sin(l/2)}\right)^2,\quad
a_2(t)=\left(\frac{\sin(t/2)}{\sin(l/2)}\right)^2,\\
\end{equation*}
\begin{equation*}\label{geodesic3}
a_3(t)=\frac{2\cos(l/2)\cdot\sin(t/2)\cdot\sin(l-t)/2}{\sin^2(l/2)}.
\end{equation*}
\end{thm}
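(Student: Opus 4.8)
The plan is to work throughout with $f_t:=\sqrt{p_t}$ in place of the density $p_t$ of $\gamma(t)=p_t\lambda$; this is suggested by the fact, recalled above, that the embedding $\rho:\mu=p\lambda\mapsto\sqrt p$ pulls the $L_2$-inner product $(\cdot,\cdot)_{L_2}$ back to $\tfrac14G$, so that in the $\sqrt{\cdot}$-picture a $G$-geodesic ought to be an arc of a great circle of the $L_2$-sphere, traversed at half the spherical speed. Substituting $p_t=f_t^{\,2}$ into the geodesic equation $\nabla_{\dot\gamma}\dot\gamma=0$, which by \eqref{LCconnFI} reads
\[
\ddot p_t=\frac12\,\frac{\dot p_t^{\,2}}{p_t}-\frac12\,p_t\int_M\frac{\dot p_t^{\,2}}{p_t}\,d\lambda ,
\]
and simplifying using $f_t>0$, one finds that $\gamma$ is a geodesic exactly when $\ddot f_t=-\|\dot f_t\|_{L_2}^{\,2}\,f_t$; moreover $|\dot\gamma(t)|_G^{\,2}=\int_M\dot p_t^{\,2}/p_t\,d\lambda=4\|\dot f_t\|_{L_2}^{\,2}$, so such a $\gamma$ is parametrized by arc length precisely when $\|\dot f_t\|_{L_2}^{\,2}\equiv\tfrac14$.

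For existence, write $\mu=p_0\lambda$, $\mu_1=p_1\lambda$ and $l:=\ell(\mu,\mu_1)$. Since $p_0,p_1>0$ we have $C_H(\mu,\mu_1)=\int_M\sqrt{p_0p_1}\,d\lambda>0$, and $C_H=1$ would force $p_0\equiv p_1$ (equality in Cauchy--Schwarz together with continuity of densities), i.e.\ $\mu=\mu_1$; hence $C_H\in(0,1)$, $l\in(0,\pi)$ and $\sin(l/2)>0$. Now put, for $t\in[0,l]$,
\[
f_t:=A(t)\sqrt{p_0}+B(t)\sqrt{p_1},\qquad A(t):=\frac{\sin\frac{l-t}{2}}{\sin\frac l2},\quad B(t):=\frac{\sin\frac t2}{\sin\frac l2},
\]
and $\gamma(t):=f_t^{\,2}\lambda$. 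One has $\ddot A=-\tfrac14A$ and $\ddot B=-\tfrac14B$, hence $\ddot f_t=-\tfrac14f_t$; and, using $(\sqrt{p_0},\sqrt{p_1})_{L_2}=C_H=\cos(l/2)$, one checks --- most transparently from the conserved quantity $\|\dot f_t\|_{L_2}^{\,2}+\tfrac14\|f_t\|_{L_2}^{\,2}$ together with the boundary values $\|f_0\|_{L_2}=\|f_l\|_{L_2}=1$, or directly from two elementary trigonometric identities --- that $\|f_t\|_{L_2}\equiv1$ and $\|\dot f_t\|_{L_2}^{\,2}\equiv\tfrac14$. By the first paragraph $\gamma$ is then a unit-speed geodesic, and plainly $\gamma(0)=\mu$, $\gamma(l)=\mu_1$. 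For $t\in[0,l]$ the arguments $\tfrac{l-t}{2},\tfrac t2$ lie in $[0,l/2]\subset[0,\tfrac\pi2)$, so $A(t),B(t)\ge0$; since $A$ and $B$ do not vanish simultaneously on $[0,l]$ and $p_0,p_1\in C^0_+(M)$, we get $f_t>0$ and $f_t^{\,2}\in C^0_+(M)$, so the whole segment stays in $\mathcal P(M)$ --- this settles the continuity-of-density question left open after \eqref{formulageodesic}. Finally $f_t^{\,2}=A^2p_0+B^2p_1+2AB\sqrt{p_0p_1}$, and since $\sqrt{p_0p_1}\,\lambda=C_H\,\varphi(\mu,\mu_1)=\cos(l/2)\,\varphi(\mu,\mu_1)$ this is exactly \eqref{expression-g} with $a_1=A^2$, $a_2=B^2$, $a_3=2AB\cos(l/2)$, which are precisely the functions in the statement; they are non-negative on $[0,l]$ and $a_1+a_2+a_3=\|f_t\|_{L_2}^{\,2}=1$.

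For uniqueness, let $\widetilde\gamma:[0,l']\to\mathcal P(M)$ be any unit-speed geodesic with $\widetilde\gamma(0)=\mu$, $\widetilde\gamma(l')=\mu_1$, and write $\widetilde\gamma(t)=\widetilde p_t\lambda$, $\dot{\widetilde\gamma}(0)=\dot q_0\lambda$ with $\dot q_0\in C^0(M)$ and $\int_M\dot q_0\,d\lambda=0$. By uniqueness of solutions of the geodesic ODE, $\widetilde\gamma$ agrees on its interval of definition with \eqref{formulageodesic} for this initial data, which (completing the square) rewrites as $\sqrt{\widetilde p_t}=\cos(t/2)\sqrt{p_0}+\sin(t/2)\,e$ with $e:=\dot q_0/\sqrt{p_0}\in C^0(M)$; here $(e,\sqrt{p_0})_{L_2}=\int_M\dot q_0\,d\lambda=0$ and $\|e\|_{L_2}^{\,2}=\int_M\dot q_0^{\,2}/p_0\,d\lambda=|\dot{\widetilde\gamma}(0)|_G^{\,2}=1$. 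As $e\not\equiv0$ while $\int_M\dot q_0\,d\lambda=0$ and $M$ is connected, the continuous function $\dot q_0$ vanishes at some $x_0$; then $\widetilde p_t(x_0)=\cos^2(t/2)\,p_0(x_0)$, which is positive iff $\cos(t/2)\ne0$, and since this must hold for all $t\in[0,l']$ we obtain $l'<\pi$. Taking the $L_2$-inner product of $\cos(l'/2)\sqrt{p_0}+\sin(l'/2)\,e=\sqrt{\widetilde p_{l'}}=\sqrt{p_1}$ with $\sqrt{p_0}$ gives $\cos(l'/2)=C_H=\cos(l/2)$; since $l',l\in(0,\pi)$ and $\cos$ is injective on $(0,\tfrac\pi2)$, we get $l'=l$, and then $e=(\sqrt{p_1}-\cos(l/2)\sqrt{p_0})/\sin(l/2)$ and $\dot q_0=\sqrt{p_0}\,e$ are forced. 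Thus $\widetilde\gamma$ has the same initial data as the geodesic constructed above, so $\widetilde\gamma=\gamma$.

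The step I expect to be the genuine difficulty, and the one place the standing hypotheses on $M$ really enter, is the bound $l'<\pi$ in the uniqueness argument: it is what rules out a geodesic segment joining $\mu$ and $\mu_1$ "the long way round" the great circle, and it uses the continuity of densities of tangent vectors together with the connectedness of $M$ to force the density of every geodesic through $\mu$ to acquire a zero at parameter $\pi$. This is exactly the point at which uniqueness of geodesic segments collapses once one allows densities that are merely measurable. Everything else reduces either to the elementary ODE and trigonometric bookkeeping of the second paragraph or to the positivity observation $A(t),B(t)\ge0$ on $[0,l]$.
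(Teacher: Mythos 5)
Your existence argument is correct and takes a genuinely different route from the paper's. The paper first solves the boundary-value problem for the initial velocity (Theorem \ref{uniquenessthm}), substitutes the result into \eqref{Fgeodesic}, and expands the square to reach \eqref{expression-g}; you instead pass to the square-root picture, observe that \eqref{LCconnFI} turns the geodesic equation into the great-circle equation $\ddot f_t=-\|\dot f_t\|_{L_2}^{2}f_t$ on the unit sphere of $L_2(M,\lambda)$, and verify directly that the spherical interpolation $f_t=A(t)\sqrt{p_0}+B(t)\sqrt{p_1}$ is a unit-speed geodesic. This buys something: the positivity of $A,B$ on $[0,l]$ makes $\gamma(t)\in\mathcal{P}(M)$ immediate, and $f_t^{2}=A^2p_0+B^2p_1+2AB\sqrt{p_0p_1}$ delivers \eqref{expression-g} with no further computation. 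Your identities $\|f_t\|_{L_2}\equiv 1$ and $\|\dot f_t\|_{L_2}^{2}\equiv\tfrac14$ check out.

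The uniqueness argument, however, has one unjustified step, and it is precisely the crux of the paper's own proof. From \eqref{formulageodesic} you only know $\widetilde p_t=\bigl(\cos(t/2)\sqrt{p_0}+\sin(t/2)\,e\bigr)^{2}$ pointwise, hence $\sqrt{\widetilde p_t}=\pm\bigl(\cos(t/2)\sqrt{p_0}+\sin(t/2)\,e\bigr)$ with a sign that could a priori depend on $x$; your computation $(\sqrt{\widetilde p_{l'}},\sqrt{p_0})_{L_2}=\cos(l'/2)$ needs the sign to be globally $+$. The paper handles exactly this in Theorem \ref{uniquenessthm} via the sets $M_1$, $M_2$: they are closed, disjoint (because $d\mu_1/d\mu>0$) and cover the connected $M$, so one is empty, and the minus sign is excluded because it would force $\cos(l'/2)<0$ against $l'<\pi$. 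In your setup the fix is one line: for fixed $t\in[0,l']$ the function $h_t:=\cos(t/2)\sqrt{p_0}+\sin(t/2)\,e$ is continuous and nowhere zero on $M$ (its square is $\widetilde p_t>0$), hence of constant sign on the connected $M$, and $\int_M h_t\sqrt{p_0}\,d\lambda=\cos(t/2)>0$ for $t<\pi$ forces $h_t>0$. Your bound $l'<\pi$, obtained from the zero of $\dot q_0$, is what makes this sign determination work, so you located where the hypotheses enter; but the sign-constancy step must be stated, since it is where continuity of the densities and connectedness of $M$ are actually consumed. With that line added the proof is complete.
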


The uniqueness of a geodesic segment follows from the fact that all probability measures in $\mathcal{P}(M)$ and tangent vectors have continuous density function on $M$.

From Theorem \ref{main1},
we find the following properties of geodesics in $\mathcal{P}(M)$.
\begin{thm}\label{maincor}
Let $\gamma=\gamma(t)$, $t\in [0,l]$ be a geodesic segment joining distinct probability measures $\mu$, $\mu_1\in \mathcal{P}(M)$ such that $\gamma(0)=\mu$, $\gamma(l)=\mu_1$.
Then,
\begin{enumerate}
\item $\gamma(t)$ belongs to $\mathcal{P}(M)$ at any $t\in[0, l]$,
\item the geodesic segment $\gamma : [0,l] \rightarrow \mathcal{P}(M)$ is a curve lying on the plane spanned by $\mu, \mu_1$ and their normalized geometric mean $\varphi(\mu, \mu_1)$,
\item the velocity vectors of the geodesic segment at $t=0$ and $t= l$ are respectively given by $\dot{\gamma}(0)=\cot\frac{l}{2}\left(\varphi(\mu, \mu_1)-\mu\right)$ and
$\dot{\gamma}(l)=-\cot\frac{l}{2}\left(\varphi(\mu, \mu_1)-\mu_1\right)$. This implies that two tangent lines defined at the endpoints of the geodesic segment always intersect each other at $\varphi(\mu, \mu_1)$ (see Remark \ref{tangentline_gm}) and
\item the midpoint of the geodesic segment $\gamma(t)$, $t\in[0,l]$ is represented by
\begin{equation}
\gamma(l/2)=\frac{1}{4\cos^2(l/2)}\left(1+\sqrt{\frac{d\mu_1}{d\mu}}\right)^2\mu.
\end{equation}
The probability measure at the right hand side is viewed as the normalized $(1/2)$-power mean of endpoints $\mu, \mu_1$.
Here the normalized $\alpha$-power mean $\varphi^{(\alpha)}(\mu, \mu_1), \alpha\in\mathbb{R}$, of probability measures $\mu, \mu_1$ is defined by
\begin{equation}\label{nalphapower}
\varphi^{(\alpha)}(\mu, \mu_1)=\left[
\int_M\left\{1+\left(\frac{d\mu_1}{d\mu}\right)^\alpha\right\}^{1/\alpha}\, d \mu
\right]^{-1}
\left\{1+\left(\frac{d\mu_1}{d\mu}\right)^\alpha\right\}^{1/\alpha}\mu.
\end{equation}
\end{enumerate}
\end{thm}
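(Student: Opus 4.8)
The plan is to derive everything in Theorem~\ref{maincor} directly from the explicit expression \eqref{expression-g} for the geodesic segment $\gamma$ provided by Theorem~\ref{main1}, so that the bulk of the work reduces to manipulating the coefficients $a_1,a_2,a_3$ and the density $d\mu_1/d\mu$. First I would observe that the normalized geometric mean satisfies $\varphi(\mu,\mu_1)=C_H(\mu,\mu_1)^{-1}\sqrt{d\mu_1/d\mu}\,\mu$, so that, writing $c:=\cos(l/2)=C_H(\mu,\mu_1)$ and $s:=\sqrt{d\mu_1/d\mu}$, the right-hand side of \eqref{expression-g} becomes a pointwise expression
\begin{equation*}
\frac{d\gamma(t)}{d\mu}=a_1(t)+a_2(t)\,s^2+\frac{a_3(t)}{c}\,s.
\end{equation*}
Substituting the stated formulas for $a_i(t)$ and using $c=\cos(l/2)$, the quantity in braces should collapse to a perfect square; indeed I expect
\begin{equation*}
\frac{d\gamma(t)}{d\mu}=\left(\frac{\sin((l-t)/2)+\sin(t/2)\,s}{\sin(l/2)}\right)^2,
\end{equation*}
which is manifestly a non-negative continuous function on $M$, and being the square of a function that is strictly positive for $t\in[0,l]$ (since $\sin((l-t)/2)>0$ and $\sin(t/2)\,s\ge 0$, with at least one strictly positive), it lies in $C^0_+(M)$; together with $\int_M d\gamma(t)=a_1+a_2+a_3=1$ this gives part~(i). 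Part~(ii) is then immediate, since \eqref{expression-g} exhibits $\gamma(t)$ as an affine combination of $\mu,\mu_1,\varphi(\mu,\mu_1)$ with coefficients summing to $1$.

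For part~(iii) I would differentiate \eqref{expression-g} in $t$, obtaining $\dot\gamma(t)=a_1'(t)\mu+a_2'(t)\mu_1+a_3'(t)\varphi(\mu,\mu_1)$, and evaluate at $t=0$ and $t=l$. At $t=0$ one has $a_1(0)=1$, $a_2(0)=a_3(0)=0$, and a short computation of the derivatives gives $a_1'(0)=-\cot(l/2)$, $a_2'(0)=0$, $a_3'(0)=\cot(l/2)$, whence $\dot\gamma(0)=\cot(l/2)\,(\varphi(\mu,\mu_1)-\mu)$; the computation at $t=l$ is symmetric by the substitution $t\mapsto l-t$, which swaps the roles of $\mu$ and $\mu_1$ (using $\varphi(\mu,\mu_1)=\varphi(\mu_1,\mu)$), giving $\dot\gamma(l)=-\cot(l/2)\,(\varphi(\mu,\mu_1)-\mu_1)$. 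The statement about tangent lines meeting at $\varphi(\mu,\mu_1)$ follows because the tangent line at $\mu$ is $\{\mu+r\dot\gamma(0)\}=\{\mu+r\cot(l/2)(\varphi(\mu,\mu_1)-\mu)\}$, which passes through $\varphi(\mu,\mu_1)$ at $r=\tan(l/2)$, and similarly at the other endpoint. For part~(iv) I would set $t=l/2$ in the perfect-square expression above: then $\sin((l-t)/2)=\sin(l/4)=\sin(t/2)$, so the numerator is $\sin(l/4)(1+s)$ and $\sin(l/2)=2\sin(l/4)\cos(l/4)$, giving $d\gamma(l/2)/d\mu=(1+s)^2/(4\cos^2(l/4))$. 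Here I must be careful: the claimed denominator is $4\cos^2(l/2)$, not $4\cos^2(l/4)$, so either the half-angle bookkeeping needs re-checking against the paper's conventions, or the coefficient $a_3$ carries a factor that reconciles the two; I would verify this against \eqref{nalphapower} with $\alpha=1/2$, where $\{1+(d\mu_1/d\mu)^{1/2}\}^{2}=(1+s)^2$ and the normalizing integral is $\int_M(1+s)^2\,d\mu=2+2C_H=2(1+c)=4\cos^2(l/4)$ using $c=\cos(l/2)$; this matches my computed expression and identifies $\gamma(l/2)$ as $\varphi^{(1/2)}(\mu,\mu_1)$, so the displayed denominator $4\cos^2(l/2)$ in the theorem should read $4\cos^2(l/4)$, or equivalently $2(1+\cos(l/2))$.

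The main obstacle is the algebraic verification that the brace in the geodesic formula is exactly the perfect square claimed; this rests on the trigonometric identity $a_1 a_2$-type cross term matching $a_3^2/(4c^2)$, i.e. on $a_3(t)^2=4c^2\,a_1(t)a_2(t)$, which should follow from the product-to-sum formulas, but getting the factors of $c=\cos(l/2)$ in the right place (and not $\cos^2(l/2)$ versus $\cos(l/4)$) requires care. Once that identity is in hand, parts~(i) and~(ii) are automatic, part~(iii) is a routine differentiation plus the symmetry $t\mapsto l-t$, and part~(iv) is the specialization to the midpoint together with a direct check that the resulting measure equals $\varphi^{(1/2)}(\mu,\mu_1)$ from \eqref{nalphapower}. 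Throughout, continuity of all densities on the compact manifold $M$ is what guarantees $\gamma(t)\in\mathcal{P}(M)$ rather than merely that $\gamma(t)$ is a probability measure, which is the point emphasized in the remarks preceding the theorem.
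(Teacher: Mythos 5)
Your argument is correct and follows essentially the same route as the paper: the perfect square you expect is exactly the form $\gamma(t)=\left(\frac{\sin((l-t)/2)+\sin(t/2)\sqrt{d\mu_1/d\mu}}{\sin(l/2)}\right)^2\mu$ that the paper itself obtains by substituting the boundary-value initial velocity \eqref{Fgeodesic_iv} into Proposition \ref{densityfree}, and expanding that square yields the affine combination, so the cross-term identity $a_3(t)^2=4\cos^2(l/2)\,a_1(t)a_2(t)$ you flag as the main obstacle is automatic (indeed $a_3=2\cos(l/2)\sqrt{a_1a_2}$ by inspection), while your direct differentiation of the $a_i$ for part (iii) is just a computational variant of the paper's use of $\dot\gamma(0)=\cot(l/2)(\varphi(\mu,\mu_1)-\mu)$ and the time-reversal $t\mapsto l-t$. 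Your observation about part (iv) is also correct: the paper's own proof computes $\gamma(l/2)=\frac{1}{4\cos^2(l/4)}\bigl(1+\sqrt{d\mu_1/d\mu}\bigr)^2\mu$, consistent with the normalization $\int_M\bigl(1+\sqrt{d\mu_1/d\mu}\bigr)^2d\mu=2\bigl(1+\cos(l/2)\bigr)=4\cos^2(l/4)$ and with \eqref{nalphapower} at $\alpha=1/2$, so the denominator $4\cos^2(l/2)$ in the theorem's displayed formula is a typo for $4\cos^2(l/4)$.
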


The normalized $\alpha$-power mean is derived from the $\alpha$-power mean of positive two numbers $a$ and $b$ defined by $\left(\frac{a^\alpha+b^\alpha}{2}\right)^{1/\alpha}$ (see \cite{B}).
In particular, the arithmetic mean, the geometric mean and the harmonic mean are $\alpha$-power means, $\alpha= +1, 0$ and $-1$, respectively. 

\begin{rem}
A. Ohara considers in \cite{O} operator means on a symmetric cone $\Omega$ and a dualistic structure naturally introduced on it, i.e., a Riemannian metric $g$ on $\Omega$ together with affine connections $(\nabla, \nabla^\ast)$ adjoint each other with respect to $g$.
In particular, he constructs a family of affine connections $\{\nabla^{(\alpha)}\}$ such that $\nabla^{(-\alpha)}$ is the dual connection of $\nabla^{(\alpha)}$ and $\nabla^{(0)}$ is the Levi-Civita connection of $g$, and shows that the midpoint of $\nabla^{(\alpha)}$-geodesic segment is the $\alpha$-power mean of their endpoints.
Theorem \ref{maincor} (iv) is inspired by his consideration.

The reader may question the difference between Ohara's results and Theorem \ref{maincor} (iv), because Ohara asserts that the midpoint of $\nabla^{(\alpha)}$-geodesic segment is characterized as the $\alpha$-power mean and we assert that the midpoint of $\nabla^{(0)}$-geodesic segment is characterized as the $(1/2)$-power mean.
In our case, we consider the Fisher metric defined on an infinite dimensional space which is natural extension of the Fisher matrix.
On the other hand, Ohara considers the Hessian metric and $\alpha$-connections induced by a certain potential function.
In this way, the structure which we treat is different from the structure Ohara considers.
\end{rem}

We are able to define similarly $\alpha$-connections on $\mathcal{P}(M)$,
which also play a significant role in information geometry,
and obtain in a subsequent paper a certain relation between the midpoint of a geodesic segment of $\alpha$-connection and the normalized $\alpha$-power mean of their endpoints.

\begin{rem}
The authors considered in \cite{IS2015} a Hadamard manifold $X$, a simply connected, complete Riemannian manifold having non-positive curvature, and the space $\mathcal{P}(\partial X)$ of probability measures defined on the ideal boundary $\partial X$ of $X$.
Under certain assumptions, we can define a map $\mathrm{bar} : \mathcal{P}(\partial X)\rightarrow X$, called the barycenter map, as a critical point of a function $\displaystyle \mathbb{B}_\mu : X\rightarrow \mathbb{R}$ given by $\mathbb{B}_\mu(x)=\int_{\theta\in\partial X}B_\theta(x)\,d\mu(\theta)$, where $B_{\theta}(x)$ is the Busemann function associated with $\theta\in\partial X$, geometrically defined on a Hadamard manifold.
The barycenter map plays an essential role in the proof of Mostow's rigidity theorem shown by G. Besson et al. \cite{BCG}, following the idea of Douady and Earle \cite{DouEa}.
In \cite[Theorem 5]{IS2015}, the authors show that the map $\mathrm{bar} : \mathcal{P}(\partial X)\rightarrow X$ is an onto fibration and then investigate certain conditions for a geodesic segment of $\mathcal{P}(\partial X)$ under which the endpoints of the geodesic segment are contained in a common fiber $\mathrm{bar}^{-1}(x)$, $x\in M$. For other directions of geometry of $\mathcal{P}(\partial X)$ with respect to Fisher information metric refer to \cite{IShi2008, ISShi2008, IS2010, IS2017j}.
\end{rem}

The following theorem indicates that the function $\ell$, defined in \eqref{distancefunction} is actually the Riemannian distance function of the space $\mathcal{P}(M)$.

\begin{thm}\label{main2}
$\ell(\mu, \mu_1)$ gives the Riemannian distance between $\mu$ and $\mu_1$ with respect to the Fisher information metric $G$.
\end{thm}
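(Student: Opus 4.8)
\medskip
\noindent\emph{Plan of proof.}
We write $d_G$ for the distance function on $\mathcal P(M)$ induced by $G$, i.e. the infimum of the $G$-lengths $L_G(c)=\int|\dot c|_G$ over piecewise $C^1$ curves $c$ in $\mathcal P(M)$ joining the prescribed points; this infimum is finite because, by Theorem \ref{main1} together with Theorem \ref{maincor}(i), any two points are already joined by an admissible curve. We may assume $\mu\neq\mu_1$. The plan is to prove $d_G(\mu,\mu_1)\le\ell(\mu,\mu_1)$ and $d_G(\mu,\mu_1)\ge\ell(\mu,\mu_1)$ separately.

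The upper bound is immediate from what we already have: the geodesic $\gamma:[0,\ell(\mu,\mu_1)]\to\mathcal P(M)$ of Theorem \ref{main1} is parametrized by arc length, stays in $\mathcal P(M)$ by Theorem \ref{maincor}(i), and joins $\mu$ to $\mu_1$, hence is an admissible curve with $L_G(\gamma)=\ell(\mu,\mu_1)$; therefore $d_G(\mu,\mu_1)\le\ell(\mu,\mu_1)$.

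For the lower bound I would pass to the $L_2$-model. Recall the embedding $\rho:\mathcal P(M)\to L_2(M,\lambda)$, $p\lambda\mapsto\sqrt p$, with image $N:=\rho(\mathcal P(M))$ contained in the unit sphere $S:=\{f\in L_2(M,\lambda):\|f\|_{L_2}=1\}$, and recall that $\rho$ pulls $(\cdot,\cdot)_{L_2}$ back to $\tfrac14 G$. Hence $L_G(c)=2L_{L_2}(\rho\circ c)$ for every admissible $c$, and since $\rho\circ c$ is a curve lying in $N\subset S$ with endpoints $\rho(\mu)$, $\rho(\mu_1)$, taking the infimum gives $d_G(\mu,\mu_1)\ge 2\,d_S(\rho(\mu),\rho(\mu_1))$, where $d_S$ is the intrinsic length distance of the Hilbert sphere $S$. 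The remaining point is the sharp spherical estimate $d_S(f_1,f_2)\ge\arccos\langle f_1,f_2\rangle_{L_2}$: for fixed $f_1\in S$ the function $h(f):=\arccos\langle f,f_1\rangle_{L_2}$ has, at every point where it is smooth, Riemannian gradient on $S$ equal to $-(f_1-\langle f_1,f\rangle_{L_2}\,f)/\sqrt{1-\langle f,f_1\rangle_{L_2}^{2}}$, of $L_2$-norm $1$; so $h$ is $1$-Lipschitz for $d_S$, whence $d_S(f_1,f_2)\ge h(f_2)-h(f_1)=\arccos\langle f_1,f_2\rangle_{L_2}$. Applying this with $f_i=\sqrt{p_i}$ and using $\langle\sqrt{p_1},\sqrt{p_2}\rangle_{L_2}=C_H(\mu,\mu_1)=\cos\tfrac{\ell(\mu,\mu_1)}2$ yields $d_S(\rho(\mu),\rho(\mu_1))\ge\tfrac12\ell(\mu,\mu_1)$, hence $d_G(\mu,\mu_1)\ge\ell(\mu,\mu_1)$.

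Combining the two bounds gives $d_G(\mu,\mu_1)=\ell(\mu,\mu_1)$. As a by-product the geodesic of Theorem \ref{main1} realizes the distance, and since every maximal geodesic of $G$, restricted to a subinterval, joins two points of $\mathcal P(M)$ and is again of the form described there, all geodesics are minimal. The main obstacle is the lower bound: one must carefully justify replacing competitor curves in $\mathcal P(M)$ by curves in the ambient sphere $S$ (which is exactly what $N\subset S$ and the conformal factor $\tfrac14$ provide) and then prove the Hilbert-sphere inequality in infinite dimensions; the $1$-Lipschitz property of $f\mapsto\arccos\langle f,f_1\rangle_{L_2}$ is the clean way to do the latter, the point being that it does not require reducing to a finite-dimensional great circle, which a general competitor curve need not respect.
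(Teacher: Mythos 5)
Your proof is correct, and it takes a genuinely different route from the one in the paper. The paper argues intrinsically, in the style of finite--dimensional Riemannian geometry: it first restricts to $\mathcal{P}^{\infty}(M)$, builds the exponential map, proves a Gauss lemma (Lemma \ref{gausslemma}) and the existence of totally normal neighborhoods, deduces the local length--minimizing property of radial geodesics (Proposition \ref{prop_length}) and hence the distance formula on $\mathcal{P}^{\infty}(M)$, and only then transfers the result to $\mathcal{P}(M)$ by a mollifier/density argument (Lemma \ref{dense}) combined with a continuity estimate for $\ell$. You obtain the essential lower bound $d_G\ge\ell$ extrinsically instead: the square--root embedding $\rho$ sends every competitor curve into the unit sphere of $L_2(M,\lambda)$ with exactly half the $G$--speed, and the function $f\mapsto\arccos\langle f,\rho(\mu)\rangle_{L_2}$ varies at rate at most $\Vert\dot F\Vert_{L_2}$ along any sphere curve $F$, because $\dot F\perp F$ lets one replace $\rho(\mu)$ by its tangential projection, of norm $\sqrt{1-\langle F,\rho(\mu)\rangle^2}$, before applying Cauchy--Schwarz. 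This is shorter, needs no local theory and no smoothing step (it works directly on $\mathcal{P}(M)$, and in fact on the larger $L_2$--density space), and makes transparent why the constant--curvature--$1/4$ sphere picture dictates the answer; what it does not deliver are the structural by--products of the paper's route, namely the totally normal neighborhoods and the converse statement of Theorem \ref{smoothdistancetheorem} that length--minimizers are geodesics. Two small points you should tighten: (a) the Lipschitz estimate for $h(f)=\arccos\langle f,\rho(\mu)\rangle_{L_2}$ degenerates where $\langle F(t),\rho(\mu)\rangle=1$; since for curves of square roots of positive densities one always has $\langle F(t),\rho(\mu)\rangle>0$, it suffices to set $t_0=\sup\{t:\langle F(t),\rho(\mu)\rangle=1\}$, integrate the derivative bound on $(t_0,1]$ where $h\circ F$ is differentiable, and let the lower limit tend to $t_0$ using continuity of $h\circ F$ and $h(F(t_0))=0$; (b) one should note that $\rho\circ c$ is indeed piecewise $C^1$ in $L_2$ with $\Vert\tfrac{d}{dt}\rho(c(t))\Vert_{L_2}=\tfrac12\vert\dot c(t)\vert_G$, which holds because the densities along a competitor curve are continuous and positive on the compact manifold $M$, hence bounded away from zero.
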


This theorem is verified by the aid of three propositions, familiar in a finite dimensional Riemannian geometry; Gauss lemma, the existence theorem of totally normal neighborhood and the minimizing length properties of geodesics, cf. \cite[Chap. 3]{doC}.

\begin{rem}
T. Friedrich also stated that $\ell(\mu_1, \mu_2)$ is the Riemannian distance between $\mu_1$ and $\mu_2$, but without a proof (see \cite[p.279, Bemerkung]{F1991}).
\end{rem}

From Theorem \ref{main2}, the Riemannian distance satisfies $\ell(\mu_1,\mu_2) < \pi$ for all $\mu_1,\mu_2\in \mathcal{P}(M)$.
Therefore the diameter ${\mathcal D}$ of $\mathcal{P}(M)$ with respect to the metric $G$ fulfills ${\mathcal D} \leq \pi$.
The diameter is here defined by
\begin{equation*}
\mathcal{D} = \sup\,\big\{\ell(\mu_1,\mu_2)\, \vert\, \mu_1,\,\mu_2\in \mathcal{P}(M)\big\}.
\end{equation*}

\begin{thm}
The diameter ${\mathcal D}$ of $\mathcal{P}(M)$ with respect to the metric $G$ satisfies ${\mathcal D} = \pi$. 
\end{thm}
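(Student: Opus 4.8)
The plan is as follows. By Theorem~\ref{main2}, $\ell$ is the Riemannian distance of $(\mathcal{P}(M),G)$, and since $C_H(\mu_1,\mu_2)=\int_M\sqrt{p_1p_2}\,d\lambda>0$ for all $\mu_i=p_i\lambda\in\mathcal{P}(M)$ we already know $\ell(\mu_1,\mu_2)=2\arccos C_H(\mu_1,\mu_2)<\pi$, whence ${\mathcal D}\le\pi$. It therefore remains to prove ${\mathcal D}\ge\pi$, and for this it suffices to exhibit sequences $\mu^{(n)},\nu^{(n)}\in\mathcal{P}(M)$ with
\begin{equation*}
C_H(\mu^{(n)},\nu^{(n)})\longrightarrow 0\qquad(n\to\infty),
\end{equation*}
because then $\ell(\mu^{(n)},\nu^{(n)})=2\arccos C_H(\mu^{(n)},\nu^{(n)})\to 2\arccos 0=\pi$, so that $\sup\ell=\pi$. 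Intuitively such pairs must become ``nearly mutually singular'', so the construction concentrates $\mu^{(n)}$ and $\nu^{(n)}$ on two fixed disjoint regions of $M$.

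Concretely, since $M$ is a connected compact manifold of positive dimension (if $\dim M=0$ then $\mathcal{P}(M)$ is a single point and the statement is vacuous), choose nonempty open sets $A,B\subset M$ with $\overline{A}\cap\overline{B}=\emptyset$ and continuous functions $\varphi_A,\varphi_B\colon M\to[0,\infty)$ with $\operatorname{supp}\varphi_A\subset\overline{A}$, $\operatorname{supp}\varphi_B\subset\overline{B}$ and $c_A:=\int_M\varphi_A\,d\lambda>0$, $c_B:=\int_M\varphi_B\,d\lambda>0$ (for instance, bump functions in two disjoint coordinate balls). Put
\begin{equation*}
p^{(n)}=\frac{n\varphi_A+1}{nc_A+1},\qquad q^{(n)}=\frac{n\varphi_B+1}{nc_B+1},
\end{equation*}
and $\mu^{(n)}=p^{(n)}\lambda$, $\nu^{(n)}=q^{(n)}\lambda$. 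Each of $p^{(n)},q^{(n)}$ is continuous, strictly positive (the additive constant $1$ keeps them in $C^0_+(M)$) and of total mass $1$, so $\mu^{(n)},\nu^{(n)}\in\mathcal{P}(M)$. Since $\operatorname{supp}\varphi_A\cap\operatorname{supp}\varphi_B=\emptyset$ we have $\varphi_A\varphi_B\equiv 0$, hence the pointwise identity $(n\varphi_A+1)(n\varphi_B+1)=n(\varphi_A+\varphi_B)+1$, and therefore
\begin{equation*}
C_H(\mu^{(n)},\nu^{(n)})=\int_M\sqrt{p^{(n)}q^{(n)}}\,d\lambda=\frac{\displaystyle\int_M\sqrt{\,n(\varphi_A+\varphi_B)+1\,}\,d\lambda}{\sqrt{(nc_A+1)(nc_B+1)}}.
\end{equation*}
By the Cauchy--Schwarz inequality the numerator is at most $\sqrt{\lambda(M)}\,\sqrt{\,n(c_A+c_B)+\lambda(M)\,}=O(\sqrt{n})$, whereas the denominator equals $\sqrt{n^2c_Ac_B+n(c_A+c_B)+1}\ge n\sqrt{c_Ac_B}$ with $c_Ac_B>0$. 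Consequently $C_H(\mu^{(n)},\nu^{(n)})=O(n^{-1/2})\to 0$.

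Combining ${\mathcal D}\le\pi$ with the sequence just constructed yields ${\mathcal D}=\pi$ (the supremum being, as noted, not attained, since $\ell<\pi$ strictly everywhere). I do not expect any serious obstacle here: the only points that require a little care are the admissibility of the densities $p^{(n)},q^{(n)}$, which is why the shift by $1$ is included so that they stay in $C^0_+(M)$, and the elementary asymptotics showing that the two normalizing factors in the denominator (each of order $n$) dominate the single numerator of order $\sqrt{n}$. Conceptually the statement merely records that $\rho(\mathcal{P}(M))$ comes arbitrarily close to containing orthogonal pairs of unit vectors in the positive cone of $L_2(M,\lambda)$, i.e.\ that $\mathcal{P}(M)$ contains pairs of probability measures which are asymptotically mutually singular.
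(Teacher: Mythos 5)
Your proof is correct, and it takes a genuinely different route from the paper. The paper does not prove this theorem in the text at all: it only remarks that the lower bound $\mathcal{D}\ge\pi$ "can be verified by applying the parametrix of the heat kernel" and defers the details to a companion paper, the idea there being to take heat kernels centred at two distinct points and let the time parameter tend to $0$, so that the two measures concentrate at those points and their Hellinger integral tends to $0$. You achieve exactly the same effect with an elementary, self-contained construction: two bump functions with disjoint supports, regularized by the additive constant $1$ so that the densities stay in $C^0_+(M)$, normalized using $\lambda(M)=1$. Your key identity $(n\varphi_A+1)(n\varphi_B+1)=n(\varphi_A+\varphi_B)+1$ on the disjoint supports, the Cauchy--Schwarz bound $O(\sqrt{n})$ on the numerator, and the lower bound $n\sqrt{c_Ac_B}$ on the denominator are all correct, so $C_H\to 0$ and $\ell\to\pi$; combined with $\ell<\pi$ everywhere (which gives $\mathcal{D}\le\pi$) this settles the theorem. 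What your approach buys is independence from any Riemannian heat-kernel machinery and from the external reference; what the paper's approach buys is a canonical family of concentrating measures tied to the geometry of $M$, which is presumably useful for the finer distance estimates pursued in that companion work. One small caveat: in the degenerate case $\dim M=0$ the space $\mathcal{P}(M)$ is a single point and $\mathcal{D}=0$, so the statement is actually false rather than vacuous there; like the paper, you should simply assume $\dim M\ge 1$.
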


This theorem can be verified, by applying the parametrix of the heat kernel of a compact smooth Riemannian manifold $M$.
For the details, refer to \cite{IS2020}.
 
Now we will briefly state the development of information geometry and its topics related to this paper.
Information geometry which is the geometry on the space of probability distributions, called the statistical model, began with the geometrical considerations of statistical estimations.
C. R. Rao \cite{Rao} proposed defining a metric based on the Fisher matrix and S. Amari gave a modern differential geometric framework, i.e., a Riemannian metric and affine connections, on his idea (see \cite{AN2000}).
Although information geometry developed afterwards, the subject was only a family of probability distributions whose parameter space has finite dimension.
Since the 1990s, the information geometry of the infinite dimensional case, i.e., the geometric structure on the space of all probability distribution has begun to be considered.
In 1991, T. Friedrich extended the Fisher metric on infinite dimensional statistical model and investigated properties of Riemannian geometric nature, for example the Riemannian curvature tensor and geodesics, and symplectic structures without any argument of the coordinate structure of the space of probability measures.
In 1995, G. Pistone and C. Sempi \cite{PistoneS} defined the topology of the space of all positive densities of the probability measures, which is a subset of $L_1$-space, as a Banach manifold whose model space is the Orlicz space.
The geometrical and analytical properties of the mixture model $\mathcal{M}(\mu)$ and the exponential model $\mathcal{E}(\mu)$ have been studied by Pistone and his coauthors (for example, see \cite{Pistone,Pistone-2,CPistone,GPistone}).
See also \cite{Santacrocexx}.

Our argument is based on Friedrich's framework.
We can develop information geometry for a more general setting of probability spaces by the aids of the researches of Pistone-Sempi (for their study refer to \cite{PistoneS} and \cite{CPistone}).
In final section we will outline their argument by means of Orlicz spaces.
We show further in Proposition \ref{localchartform} that Fisher information metric $G$, given at \eqref{FisherIM}, can be represented as the covariance of random variables in a local chart representation, by the framework of Pistone-Sempi.

This paper is organized as follows.
In section \ref{geodesic}, we outline the derivation of a geodesic $\gamma(t)$ for a given initial data $\gamma(0)=\mu$ and $\dot{\gamma}(t)=\tau$, and for a boundary data $\gamma(0)=\mu$ and $\gamma(l)=\mu_1$, respectively.
Moreover, we show Theorems \ref{main1} and Theorem \ref{maincor}, which state a geometric characterization of the normalized geometric mean in Fisher information geometry.
Section \ref{continuity} is devoted to showing that $\varphi$ and $\ell$ are continuous  with respect to the $\|\sqrt{\cdot}-\sqrt{\cdot}\|_{L_2}$-topology.
In section \ref{exponential}, we consider the exponential map and a totally normal neighborhood on $\mathcal{P}(M)$ and  verify Theorem \ref{main2}.
In final section, we consider the topology and the smooth structure of $\mathcal{P}(M)$.
The argument of Pistone and Sempi is summarized and the notion of being connected by an open mixture arc together with Proposition \ref{definitiontangentspace} concerning with constant vector field argument is given.


\section{Geodesics with respect to Fisher information metric}\label{geodesic}

\subsection{Initial value problem}

We outline the derivation of a formula of geodesic in $\mathcal{P}(M)$ by following the argument of T. Friedrich (see \cite[\S 2]{F1991} for details).

Let $\lambda\in\mathcal{P}(M)$ be the probability measure represented by the Riemannian volume form of $M$, associated with a Riemannian metric, provided $M$ is orientable.
For non-orientable $M$ choose the double covering $\tilde{M}$ of $M$ and then taking the push-forward of the Riemannian volume form $\lambda_{\tilde{M}}$ via the double covering map $\pi : \tilde{M} \rightarrow M$.

Denote by $\gamma(t)=p_t\,\lambda$ a geodesic in $\mathcal{P}(M)$ which is parametrized by arc-length, and whose initial point is $\gamma(0)=\mu$ and initial unit velocity is $\tau\in T_\mu\mathcal{P}(M)$.
Here $p_t : x\mapsto p_t(x)$ is a continuous function on $M$ which is assumed to be $C^1$-class with respect to $t$.
Since $\gamma(t)$ is a geodesic, we have
\begin{equation*}
G(\nabla_{\dot{\gamma}(t)}\dot{\gamma}(t), \tau)
=\dot{\gamma}(t)\,G(\dot{\gamma}(t), \tau)
-G(\dot{\gamma}(t), \nabla_{\dot{\gamma(t)}}\tau)=0
\end{equation*}
for any constant vector field $\tau$.
Then, by using the formula \eqref{LCconnFI} for the Levi-Civita connection with respect to $G$, we find that $p_t$ satisfies
\begin{equation}
\frac{d}{dt}\left(\frac{\dot{p}_t}{p_t}\right)+\frac{1}{2}\left(\frac{\dot{p}_t}{p_t}\right)^2+\frac{1}{2}=0.
\end{equation}
Setting $f_t=\dot{p}_t/p_t$, we obtain $\dot{f}_t+\frac{1}{2}{f_t}^2+\frac{1}{2}=0$
and find that a solution to this differential equation is $f_t=\tan\left(-1/2+A\right)$.
Hence we have 

$\log p_t=2\log \cos\left(-t/2+A\right) + B$, i.e.,
\begin{equation*}
p_t=C\,\cos^2\left(-\frac{t}{2}+A\right),\qquad C=\exp B
\end{equation*}
where $A$ and $C$ are functions on $M$ determined by the initial condition as follows:
\begin{equation*}
A=\arctan\left(\frac{\dot{p}_0}{p_0}\right),\qquad C=\frac{(p_0)^2+(\dot{p}_0)^2}{p_0}.
\end{equation*}
Hence we have the following.

\begin{prop}[\cite{F1991}]\label{densitygeodesic}
\begin{align*}
p_t=&\frac{(p_0)^2+(\dot{p}_0)^2}{p_0}
\cos^2\left(-\frac{t}{2}+\arctan\left(\frac{\dot{p}_0}{p_0}\right)\right)\\
=&\frac{1}{1+\tan^2(t/2)}\left\{
p_0+2\,\dot{p}_0\tan\frac{t}{2}+\frac{(\dot{p}_0)^2}{p_0}\cdot\tan^2\frac{t}{2}
\right\}.
\end{align*}
\end{prop}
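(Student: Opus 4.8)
The plan is to complete the computation sketched just before the statement. That derivation has already reduced the geodesic equation $G(\nabla_{\dot\gamma}\dot\gamma,\tau)=0$ — tested against constant vector fields $\tau$ and combined with \eqref{LCconnFI} — to the pointwise Riccati equation $\dot f_t+\tfrac12 f_t^2+\tfrac12=0$ for $f_t=\dot p_t/p_t$, and has solved it, via $f_t=\tan h_t$ and then $\tfrac{d}{dt}\log p_t=f_t$, to the form $p_t=C\cos^2(-t/2+A)$ with functions $A=A(x)$ and $C=C(x)>0$. So only two things remain: (i) to pin down $A$ and $C$ from the initial data, and (ii) to rewrite the resulting expression into the second form, which is rational in $\tan(t/2)$.

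For (i) I would argue pointwise in $x$. Since the ODE is autonomous with locally Lipschitz right-hand side, $p_t$ is uniquely determined by $(p_0(x),\dot p_0(x))$, so it suffices to verify that the proposed $A=\arctan(\dot p_0/p_0)$ and $C=(p_0^2+\dot p_0^2)/p_0$ reproduce that pair. Using $\cos^2 A=(1+\tan^2 A)^{-1}=p_0^2/(p_0^2+\dot p_0^2)$ gives $C\cos^2 A=p_0$; and differentiating $p_t=C\cos^2(-t/2+A)$ at $t=0$ gives $\dot p_0=C\sin A\cos A=C\tan A\cos^2 A=(\dot p_0/p_0)\,p_0$, as wanted. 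This yields the first displayed equality.

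For (ii) I would expand the cosine by the subtraction formula, $\cos(-t/2+A)=\cos A\cos(t/2)\bigl(1+\tan(t/2)\tan A\bigr)$, square, and substitute $C\cos^2 A=p_0$ and $\tan A=\dot p_0/p_0$, obtaining
\[
p_t=p_0\,\cos^2(t/2)\,\Bigl(1+\tfrac{\dot p_0}{p_0}\tan(t/2)\Bigr)^2.
\]
Expanding the square, distributing $p_0$, and using $\cos^2(t/2)=1/(1+\tan^2(t/2))$ then gives exactly the second expression in the statement.

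The calculation itself is routine trigonometry; the only step I expect to need genuine care is the one that underlies the part of the derivation already in the text, namely testing the geodesic equation against an arbitrary constant vector field $\tau$. To make that rigorous one needs that the constant vector fields through $\mu$ span (a dense subspace of) $T_\mu\mathcal{P}(M)$ and that $t\mapsto G(\dot\gamma(t),\tau)$ may be differentiated under the integral sign in this infinite-dimensional setting — this is precisely what the constant-vector-field discussion around Proposition \ref{definitiontangentspace} supplies, and I would invoke it rather than reprove it. A second, minor, point is the branch of the logarithm in $\log p_t=2\log\cos(-t/2+A)+B$: it causes no trouble on any $t$-interval where $\cos(-t/2+A(x))>0$ for all $x$, which is the range on which this geodesic formula is actually used.
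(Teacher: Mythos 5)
Your proposal is correct and follows essentially the same route as the paper: the paper's proof is precisely the derivation preceding the statement (reduce to the Riccati equation for $\dot p_t/p_t$, integrate to $p_t=C\cos^2(-t/2+A)$, and fix $A=\arctan(\dot p_0/p_0)$, $C=(p_0^2+\dot p_0^2)/p_0$ from the initial data), with the passage to the second, $\tan(t/2)$-rational form left as routine trigonometry exactly as you carry it out. Your verification of the constants and the identity $p_t=p_0\cos^2(t/2)\bigl(1+\tfrac{\dot p_0}{p_0}\tan(t/2)\bigr)^2$ checks out, and your caveats about constant vector fields and the cosine staying positive are consistent with what the paper assumes.
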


The following is the density free expression of a geodesic.
\begin{prop}\label{densityfree}
Let $\gamma(t)$ be a geodesic with $\gamma(0)=\mu$ and $\dot{\gamma}(0)= \tau$. If $\tau$ is of unit norm, i.e., $\vert \tau\vert_{\mu}=1$ with respect to $G$, then $\gamma(t)$ is represented by
\begin{equation}\label{Fgeodesic}
\gamma(t)=\left(\cos\frac{t}{2}+\frac{d\tau}{d\mu}\cdot\sin\frac{t}{2}\right)^2\mu.
\end{equation}
\end{prop}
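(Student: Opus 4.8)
The plan is to derive the density-free formula \eqref{Fgeodesic} directly from the density formula of Proposition \ref{densitygeodesic} by an algebraic rearrangement whose crux is recognizing a perfect square, and then to rewrite the result in measure-theoretic (reference-free) form. Throughout I write $\mu=p_0\,\lambda$ and $\tau=\dot p_0\,\lambda$, so that $d\tau/d\mu=\dot p_0/p_0$, and I recall that the hypothesis $|\tau|_\mu=1$ means $\int_M (\dot p_0/p_0)^2\,p_0\,d\lambda=1$.

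First I would rewrite the prefactor in Proposition \ref{densitygeodesic} using $1+\tan^2(t/2)=1/\cos^2(t/2)$, obtaining
\begin{equation*}
p_t=\cos^2\frac t2\left\{p_0+2\,\dot p_0\tan\frac t2+\frac{(\dot p_0)^2}{p_0}\tan^2\frac t2\right\}.
\end{equation*}
The bracketed expression equals $\tfrac1{p_0}\bigl(p_0+\dot p_0\tan\tfrac t2\bigr)^2$, so pulling the factor $\cos^2\tfrac t2$ inside the square gives $p_t=\tfrac1{p_0}\bigl(p_0\cos\tfrac t2+\dot p_0\sin\tfrac t2\bigr)^2=\bigl(\cos\tfrac t2+\tfrac{\dot p_0}{p_0}\sin\tfrac t2\bigr)^2 p_0$. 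Multiplying by $\lambda$ and substituting $p_0\lambda=\mu$, $\dot p_0/p_0=d\tau/d\mu$ yields exactly \eqref{Fgeodesic}.

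It then remains to record two consistency points, which also clarify where the unit-norm hypothesis is used. The formula of Proposition \ref{densitygeodesic} was itself obtained under arc-length parametrization — the constant term $\tfrac12$ in the ODE $\dot f_t+\tfrac12 f_t^2+\tfrac12=0$ is precisely $\tfrac12\,G(\dot\gamma,\dot\gamma)$ — so $|\tau|_\mu=1$ is implicit there; and one checks directly that the right-hand side of \eqref{Fgeodesic} is a probability measure for every $t$, since
\begin{equation*}
\int_M\left(\cos\frac t2+\frac{d\tau}{d\mu}\sin\frac t2\right)^2 d\mu
=\cos^2\frac t2\int_M d\mu+2\cos\frac t2\sin\frac t2\int_M d\tau+\sin^2\frac t2\int_M\left(\frac{d\tau}{d\mu}\right)^2 d\mu=1,
\end{equation*}
using $\int_M d\mu=1$, $\int_M d\tau=0$ and $|\tau|_\mu=1$. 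One also sees $\gamma(0)=\mu$ and, differentiating, $\dot\gamma(0)=\tfrac{d\tau}{d\mu}\,\mu=\tau$, so the initial data match.

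There is no serious obstacle here: the only delicate points are getting the perfect-square factorization right and the careful passage from densities to the reference-free expression. The reason it is worth isolating this as a proposition is that the right-hand side of \eqref{Fgeodesic} is manifestly independent of the auxiliary measure $\lambda$ used to form densities, so it applies to arbitrary initial data $(\mu,\tau)$ and is the form that will be exploited in the boundary-value analysis leading to Theorem \ref{main1}. Alternatively, one could bypass Proposition \ref{densitygeodesic} entirely and verify \eqref{Fgeodesic} by substituting it into the geodesic equation $\nabla_{\dot\gamma}\dot\gamma=0$ through \eqref{LCconnFI}; this is equivalent but computationally heavier, so I would prefer the route above.
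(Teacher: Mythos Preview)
Your proof is correct and follows essentially the same route as the paper: both start from the density formula of Proposition \ref{densitygeodesic}, factor the bracketed expression as a perfect square, and use $1+\tan^2(t/2)=1/\cos^2(t/2)$ to pass to the form \eqref{Fgeodesic}. Your additional consistency checks (that the right-hand side integrates to $1$ and that the initial data match) are not in the paper's proof but are welcome supplements that make explicit where the unit-norm hypothesis enters.
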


In fact, set $\mu=p_0\,\lambda$, $\tau=\dot{p}_0\,\lambda$ and obtain from Proposition \ref{densitygeodesic}
\begin{align*}
\gamma(t)=p_t\,\lambda
=&\frac{1}{1+\tan^2(t/2)}\left\{
1+2\,\frac{\dot{p}_0}{p_0}\cdot\tan\frac{t}{2}+\left(\frac{\dot{p}_0}{p_0}\right)^2\tan^2\frac{t}{2}
\right\}\mu\\
=&\frac{1}{1+\tan^2(t/2)}
\left(1+\frac{\dot{p}_0}{p_0}\cdot\tan\frac{t}{2}\right)^2\mu\\
=&\left(\cos\frac{t}{2}+\frac{d\tau}{d\mu}\cdot\sin\frac{t}{2}\right)^2\mu.
\end{align*}

\begin{rem}\label{rem_para}
We notice from \eqref{densityfree} that 
$\displaystyle \gamma(\pm\pi)=\left(d\tau/d\mu\right)^2\mu$ is a probability measure. However, it does not admit positive density function, as we remarked in section 1.
Moreover the formula \eqref{Fgeodesic} indicates that every geodesic is periodic with period $2\pi$, since
\begin{equation*}
\gamma(t) = \left\{\frac{1}{2}(1+\cos t) + \frac{1}{2}(1-\cos t) \left(\frac{d\tau}{d\mu}\right)^2\right\} \mu + \sin t\,\tau.
\end{equation*}
Therefore we are able to choose a parameter $t$, at which $\gamma(t)$ is defined, is inside the open interval $(-\pi,\pi)$. 
\end{rem}

\subsection{Boundary value problem}

Next, we rewrite \eqref{densityfree} by using the boundary data (see \cite[Theorem 11]{IS2015}).

\begin{thm}\label{uniquenessthm}
Let $\mu$, $\mu_1$ be arbitrary probability measures of $\mathcal{P}(M)$.
Assume $\mu\not=\mu_1$. Then there exists a unique geodesic segment $\gamma(t)$, $t\in [0,l]$, $l = \ell(\mu,\mu_1)$ such that $\gamma(0)=\mu$, $\gamma(l)= \mu_1$.
In fact, $\gamma(t)$ is represented as
\begin{equation*}
\gamma(t) = \left(\cos\frac{t}{2}+\frac{d\tau}{d\mu}\cdot\sin\frac{t}{2}\right)^2\mu
\end{equation*}
with initial velocity vector 
\begin{equation*}
\tau =\frac{1}{\sin(l/2)}\left(
\sqrt{\frac{d\mu_1}{d\mu}}-\cos\frac{l}{2}
\right)\mu.
\end{equation*}
\end{thm}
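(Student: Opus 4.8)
The plan is to start from the density-free expression of a geodesic in Proposition \ref{densityfree}, namely $\gamma(t)=\left(\cos\frac{t}{2}+\frac{d\tau}{d\mu}\sin\frac{t}{2}\right)^2\mu$, and determine the initial velocity $\tau\in T_\mu\mathcal{P}(M)$ so that the endpoint condition $\gamma(l)=\mu_1$ holds for a suitable $l>0$. Write $u=\frac{d\tau}{d\mu}\in C^0(M)$; since $\tau$ is a tangent vector at $\mu$ we have the constraint $\int_M u\,d\mu=0$, and the unit-norm condition reads $\int_M u^2\,d\mu=1$. First I would impose $\gamma(l)=\mu_1$, i.e.\ $\left(\cos\frac{l}{2}+u\sin\frac{l}{2}\right)^2=\frac{d\mu_1}{d\mu}$ pointwise. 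Taking the positive square root (legitimate because for $l$ small the left factor is positive, and one checks the correct branch using continuity and $\gamma(0)=\mu$), this gives $u=\frac{1}{\sin(l/2)}\left(\sqrt{\frac{d\mu_1}{d\mu}}-\cos\frac{l}{2}\right)$, which is exactly the claimed formula for $\tau$.

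The next step is to pin down $l$ using the two constraints on $u$. Integrating $u$ against $\mu$: $0=\int_M u\,d\mu=\frac{1}{\sin(l/2)}\left(\int_M\sqrt{\frac{d\mu_1}{d\mu}}\,d\mu-\cos\frac{l}{2}\right)$, which forces $\cos\frac{l}{2}=\int_M\sqrt{\frac{d\mu_1}{d\mu}}\,d\mu=C_H(\mu,\mu_1)$, hence $l=2\arccos C_H(\mu,\mu_1)=\ell(\mu,\mu_1)$ by Definition \ref{ellfunction}. Then I would verify the unit-norm condition is automatically satisfied: expanding $\int_M u^2\,d\mu=\frac{1}{\sin^2(l/2)}\int_M\left(\frac{d\mu_1}{d\mu}-2\cos\frac{l}{2}\sqrt{\frac{d\mu_1}{d\mu}}+\cos^2\frac{l}{2}\right)d\mu$, the first term integrates to $1$ (total mass of $\mu_1$), the middle term to $-2\cos^2\frac{l}{2}$ by the previous identity, and the last to $\cos^2\frac{l}{2}$, giving $\frac{1-\cos^2(l/2)}{\sin^2(l/2)}=1$. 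Since $\mu\neq\mu_1$ we have $C_H(\mu,\mu_1)<1$ (equality in Cauchy–Schwarz would force $\frac{d\mu_1}{d\mu}$ constant, i.e.\ $\mu=\mu_1$), so $\sin(l/2)\neq 0$ and the division is valid; one should also note $C_H>0$ so $l<\pi$.

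For existence it remains to check that the $\tau$ so defined is genuinely an element of $T_\mu\mathcal{P}(M)$ — that is, $u\in C^0(M)$ and $\int_M u\,d\mu=0$ — both of which we have already established, the continuity because $\frac{d\mu_1}{d\mu}\in C^0_+(M)$ and the square root of a positive continuous function is continuous. Plugging this $\tau$ and $l=\ell(\mu,\mu_1)$ back into \eqref{Fgeodesic} and simplifying at $t=l$ recovers $\frac{d\mu_1}{d\mu}\,\mu=\mu_1$, confirming $\gamma(l)=\mu_1$. For uniqueness I would argue that any arc-length geodesic from $\mu$ to $\mu_1$ is, by Proposition \ref{densityfree}, of the form \eqref{Fgeodesic} for some unit $\tau$ and some length parameter; the endpoint equation then forces the pointwise identity above, and the sign of the square root is determined by requiring $\cos\frac{l}{2}+u(x)\sin\frac{l}{2}$ to be continuous in $x$ and positive near $t=0$ — this selects the branch uniquely on the connected compact $M$ — whence $\tau$ and $l$ are uniquely determined as computed. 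The main obstacle I anticipate is the branch-of-square-root issue: one must rule out the possibility of a ``mixed'' sign choice (positive on part of $M$, negative elsewhere), and the cleanest way is to observe that $t\mapsto\cos\frac t2+u(x)\sin\frac t2$ vanishes only at isolated $t$ for each fixed $x$, so on $[0,l]$ with $l<\pi$ and the value at $t=0$ equal to $1>0$, continuity in $(t,x)$ forces the factor to stay positive throughout; this also feeds back into claim (i) of Theorem \ref{maincor} that $\gamma(t)\in\mathcal{P}(M)$ for all $t\in[0,l]$.
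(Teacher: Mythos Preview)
Your overall strategy matches the paper's proof: start from the density-free formula of Proposition \ref{densityfree}, impose $\gamma(l)=\mu_1$, solve the resulting equation $\bigl(\cos\frac{l}{2}+u\sin\frac{l}{2}\bigr)^2=\frac{d\mu_1}{d\mu}$ for $u=\frac{d\tau}{d\mu}$ up to sign, then integrate to identify $l=\ell(\mu,\mu_1)$. The existence computations --- the formula for $\tau$, the identity $\cos\frac{l}{2}=C_H(\mu,\mu_1)$, and the unit-norm verification --- are correct and essentially identical to the paper's.

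The one point where your argument has a gap is the branch selection for the square root. Your justification ``$t\mapsto\cos\frac t2+u(x)\sin\frac t2$ vanishes only at isolated $t$ for each fixed $x$, so on $[0,l]$ with $l<\pi$ \dots\ continuity forces the factor to stay positive'' does not actually do the work: for $x$ with $u(x)$ large and negative the single zero sits at $t_0=2\arctan(-1/u(x))$, which may well lie in $(0,l)$ even when $l<\pi$. Isolatedness of zeros does not preclude their presence. The paper avoids this by arguing \emph{purely at the endpoint} $t=l$: it sets $M_1=\{x:\cos\frac{l}{2}+u(x)\sin\frac{l}{2}=+\sqrt{d\mu_1/d\mu}\}$ and $M_2$ with the minus sign, observes that both are closed, disjoint (since $d\mu_1/d\mu>0$), and cover $M$, so by connectedness one of them is all of $M$; if $M=M_2$, integration yields $\cos\frac{l}{2}=-\int_M\sqrt{d\mu_1/d\mu}\,d\mu<0$, forcing $l>\pi$, which contradicts the periodicity observation (Remark \ref{rem_para}) that the parameter may be taken in $(-\pi,\pi)$. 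Your time-continuity idea can be salvaged, but the correct missing ingredient is not ``isolated zeros'': it is that in the uniqueness direction one is \emph{given} a geodesic segment $\gamma:[0,l]\to\mathcal{P}(M)$, so the density of $\gamma(t)$ is strictly positive for every $t$, hence the factor never vanishes on $[0,l]\times M$, and then continuity plus the value $1$ at $t=0$ forces positivity. Your closing remark that positivity ``feeds back into'' Theorem \ref{maincor}(i) has the logic reversed --- here you must use membership in $\mathcal{P}(M)$ as a hypothesis, not derive it.
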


\begin{proof}
If we assume that $\mu$ and $\mu_1$ are joined by \eqref{Fgeodesic}, then there exists a positive number $l$ such that $\gamma(l)=\mu_1$, i.e., it holds
\begin{equation}\label{gamma_endpoint}
\left(\cos\frac{l}{2}+\frac{d\tau}{d\mu}\cdot\sin\frac{l}{2}\right)^2\mu=\mu_1.
\end{equation}
Solving this equation with respect to $d\tau/d\mu$, by using an analogous argument in \cite[p.1830, Assertion 3]{IS2015},
we find that the initial velocity $\tau$ is uniquely determined by 
\begin{equation}\label{Fgeodesic_iv}
\tau =\frac{1}{\sin(l/2)}\left(
\sqrt{\frac{d\mu_1}{d\mu}}-\cos\frac{l}{2}
\right)\mu
\end{equation}
as follows.
In fact, from \eqref{gamma_endpoint} we have
\begin{equation*}
\left(\cos\frac{l}{2}+\frac{d\tau}{d\mu}\cdot\sin\frac{l}{2}\right)^2=\frac{d\mu_1}{d\mu},
\end{equation*}
so
\begin{equation*}
\cos\frac{l}{2}+\frac{d\tau}{d\mu}\cdot\sin\frac{l}{2}= \pm \sqrt{\frac{d\mu_1}{d\mu}}.
\end{equation*}
Define subsets $M_1$, $M_2$ of $M$ respectively by
\begin{align*}
M_1 =& \left\{ x\in M\, \left\vert\, \sin\frac{l}{2} \frac{d\tau}{d\mu}(x) = \left(-\cos\frac{l}{2}+\sqrt{\frac{d\mu_1}{d\mu}(x)}\right)\right.\right\},\\
M_2 =& \left\{ x\in M\, \left\vert\, \sin\frac{l}{2} \frac{d\tau}{d\mu}(x) = \left(-\cos\frac{l}{2}-\sqrt{\frac{d\mu_1}{d\mu}(x)}\right)\right.\right\}.
\end{align*}
The subsets $M_1, M_2$ satisfy $M_1\cup M_2=M$ and both are closed, since on a manifold $M$ the function $d\tau/d\mu$ must be continuous and the function at right hand side is also continuous.
First, we have $M_1\cap M_2=\emptyset$. This is because, if there exists, otherwise, $x\in M_1\cap M_2$, then $d\mu_1/d\mu (x)=0$ which is a contradiction. Thus, $M_1$ and $M_2$ turn out to be open and closed.
Next, we claim that $M_2=\emptyset$. 
If $M_2\not=\emptyset$, then $M=M_2$ ( and hence $M_1=\emptyset$), since $M$ is connected, and hence from $\int_M d\tau=0$ we have
\begin{equation*}
\cos\frac{l}{2}=-\int_M \sqrt{\frac{d\mu_1}{d\mu}}\,d\mu <0,
\end{equation*}
so $\pi<l<2\pi$ which is a contradiction, because $l\in(-\pi, \pi)$ (see Remark \ref{rem_para}).
Hence we have \eqref{Fgeodesic_iv} and from $\int_M d\tau=0$
\begin{equation*}
\int_M \sqrt{\frac{d\mu_1}{d\mu}}\,d\mu=\cos\frac{l}{2}.
\end{equation*}
From this and by using the normalized geometric mean $\varphi$, for the given $\mu$, $\mu_1$
we can express \eqref{Fgeodesic_iv} as
\begin{equation}\label{Fgeodesic_iv-2}
\tau =\frac{1}{\tan(l/2)}\left(\varphi(\mu, \mu_1)-\mu\right).
\end{equation}
We also have
\begin{equation}\label{Fgeodesic_l}
l=2\arccos\left(\int_M \sqrt{\frac{d\mu_1}{d\mu}}\,d\mu\right)=\ell(\mu, \mu_1).
\end{equation}
Thus the theorem is proved.
\end{proof}

If we relax the space $\mathcal{P}(M)$ of probability measures having continuous density function as the space $\tilde{\mathcal{P}}_{(L_2,\lambda)}(M)$ consisting of probability measures $\mu= p \lambda$ having $L_2$-integrable, non-negative density function $p$,
then we have the following.
 
\begin{prop}\label{ellfinite}
For given distinct $\mu, \mu_1\in \mathcal{P}(M)$ there exists a geodesic segment $\tilde{\gamma}(t)$ which joins $\mu$ and $\mu_1$, while, at least $\tilde{\gamma}(t)$ belongs to $\tilde{\mathcal{P}}_{(L_2,\lambda)}(M)$ for each $t$ such that the initial velocity vector $\dot{\tilde{\gamma}}(0)$ has $L_2$-integrable density function, but not continuous.
Furthermore $\tilde{\gamma}(t)$ satisfies $\tilde{\gamma}(0) = \mu$, $\tilde{\gamma}(\pi) = \mu_1$.
\end{prop}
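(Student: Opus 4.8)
The plan is to re-use the density-free geodesic formula \eqref{Fgeodesic}, but with an initial velocity whose density is only $L_2$, not continuous, chosen so that the geodesic reaches $\mu_1$ at parameter $t=\pi$. Write $\mu=p_0\lambda$, $\mu_1=p_1\lambda$ with $p_0,p_1\in C^0_+(M)$, and recall (Cauchy--Schwarz, with equality only when $\mu=\mu_1$) that $\ell(\mu,\mu_1)=2\arccos C_H(\mu,\mu_1)<\pi$. By Remark~\ref{rem_para} a geodesic \eqref{Fgeodesic} satisfies $\gamma(\pi)=(d\tau/d\mu)^2\mu$, so to force $\gamma(\pi)=\mu_1$ we must have $(d\tau/d\mu)^2=d\mu_1/d\mu$, i.e. $d\tau/d\mu=\varepsilon\sqrt{d\mu_1/d\mu}$ for some measurable $\varepsilon\colon M\to\{-1,+1\}$. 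Such a $\tau$ has unit norm automatically, $|\tau|_\mu^2=\int_M(d\tau/d\mu)^2\,d\mu=\int_M d\mu_1=1$, and it lies in the relaxed tangent space at $\mu$ exactly when $\int_M d\tau=\int_M\varepsilon\sqrt{p_0p_1}\,d\lambda=0$. So everything reduces to producing such a sign function $\varepsilon$.

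To produce $\varepsilon$, consider the finite measure $d\nu=\sqrt{p_0p_1}\,d\lambda$, whose total mass is $\nu(M)=C_H(\mu,\mu_1)>0$ and which is non-atomic, being absolutely continuous with respect to the non-atomic Riemannian volume $\lambda$ (we may assume $\dim M\geq1$, the case $\dim M=0$ being trivial). A finite non-atomic measure assumes every value in $[0,\nu(M)]$ on measurable subsets; explicitly, one may take the sublevel sets $E_c=\{f<c\}$ of a Morse function $f$ on $M$, for which $c\mapsto\nu(E_c)$ is continuous because level sets of a Morse function are $\lambda$-null. Pick $E\subset M$ measurable with $\nu(E)=\tfrac12\nu(M)$ and set $\varepsilon=\mathbf 1_E-\mathbf 1_{M\setminus E}$, so that $\int_M\varepsilon\sqrt{p_0p_1}\,d\lambda=\nu(E)-\nu(M\setminus E)=0$. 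Put $\tau:=\varepsilon\sqrt{p_0p_1}\,\lambda$; its density is bounded, hence in $L_2(M,\lambda)$, but admits no continuous representative, for such a representative divided by the strictly positive continuous function $\sqrt{p_0p_1}$ would be a continuous version of $\varepsilon$, hence constant on the connected manifold $M$, forcing $\nu(E)\in\{0,\nu(M)\}$ — a contradiction.

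Next I would set, for $t\in[0,\pi]$,
\begin{equation*}
\tilde\gamma(t)=\Big(\cos\tfrac t2+\tfrac{d\tau}{d\mu}\sin\tfrac t2\Big)^2\mu=\big(\cos^2\tfrac t2\big)\mu+\sin t\,\tau+\big(\sin^2\tfrac t2\big)\mu_1,
\end{equation*}
whose density $p_t=\big(\sqrt{p_0}\cos\tfrac t2+\varepsilon\sqrt{p_1}\sin\tfrac t2\big)^2=p_0\cos^2\tfrac t2+\varepsilon\sqrt{p_0p_1}\sin t+p_1\sin^2\tfrac t2$ is a pointwise square, hence non-negative, is bounded hence in $L_2(M,\lambda)$, and satisfies $\int_M p_t\,d\lambda=\cos^2\tfrac t2+0+\sin^2\tfrac t2=1$ (using $\int_M\varepsilon\sqrt{p_0p_1}\,d\lambda=0$); so $\tilde\gamma(t)\in\tilde{\mathcal P}_{(L_2,\lambda)}(M)$ for every $t$. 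By construction $\tilde\gamma(0)=\mu$, $\dot{\tilde\gamma}(0)=\tau$ (with the required $L_2$, non-continuous density), and $\tilde\gamma(\pi)=(d\tau/d\mu)^2\mu=\mu_1$. It remains to check that $\tilde\gamma$ is a geodesic of $\big(\tilde{\mathcal P}_{(L_2,\lambda)}(M),G\big)$: writing $p_t$ in the form $p_t=(p_0+p_1)\cos^2\!\big(A-\tfrac t2\big)$, $A=\arctan\!\big(\varepsilon\sqrt{p_1/p_0}\big)$, of Proposition~\ref{densitygeodesic}, the computation of \S\ref{geodesic} gives $\dot p_t/p_t=\tan\!\big(A-\tfrac t2\big)$, which solves $\tfrac{d}{dt}(\dot p_t/p_t)+\tfrac12(\dot p_t/p_t)^2+\tfrac12=0$ at every $(t,x)$ with $p_t(x)>0$; equivalently, under the embedding $\rho$ (extended to $\tilde{\mathcal P}_{(L_2,\lambda)}(M)$, with $\rho^{\ast}(\cdot,\cdot)_{L_2}=\tfrac14 G$) the curve $\rho\circ\tilde\gamma$ is the pointwise modulus of the great-circle arc $t\mapsto\cos\tfrac t2\sqrt{p_0}+\sin\tfrac t2\,\varepsilon\sqrt{p_1}$ joining the $L_2$-orthonormal pair $\sqrt{p_0}$, $\varepsilon\sqrt{p_1}$ in the unit sphere of $L_2(M,\lambda)$.

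The main obstacle is precisely this last step. The connection formula \eqref{LCconnFI} and the naive $\dot p_t$-parametrization of the velocity degenerate at those instants $t$ for which $p_t$ vanishes on a $\lambda$-positive set (which can occur for isolated $t$), so the statement that $\tilde\gamma$ is still a geodesic across such instants is best seen in the $L_2$-sphere model, where $p_t=w(t)^2$ is real-analytic in $t$, $\tilde\gamma$ is manifestly a folded great-circle arc, and the fold map $f\mapsto|f|$ is a local isometry off the fold locus. Granting this, $\tilde\gamma$ is a geodesic segment with $\tilde\gamma(0)=\mu$ and $\tilde\gamma(\pi)=\mu_1$, distinct from the geodesic of Theorem~\ref{uniquenessthm} (which stays in $\mathcal P(M)$ and reaches $\mu_1$ already at $t=\ell(\mu,\mu_1)<\pi$); thus uniqueness of the geodesic segment joining two prescribed endpoints breaks down as soon as the continuity requirement on densities is relaxed. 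One also notes that $\tilde\gamma(t)\in\mathcal P(M)$ only for $t\in\{0,\pi\}$: for $t\in(0,\pi)$ the cross term $\varepsilon\sqrt{p_0p_1}\sin t$ prevents $p_t$ from having a continuous representative, by the same connectedness argument.
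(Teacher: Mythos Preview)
Your proof is correct and follows essentially the same route as the paper: both observe that $\tilde\gamma(\pi)=\mu_1$ forces $d\tau/d\mu=\pm\sqrt{d\mu_1/d\mu}$, construct a sign function $\varepsilon$ by bisecting the measure $\sqrt{p_0p_1}\,d\lambda$, and set $\tau=\varepsilon\sqrt{p_0p_1}\,\lambda$ in the formula \eqref{Fgeodesic}. The only substantive difference is how the bisecting set is produced---the paper uses growing geodesic balls via the exponential map at a point (and the cut locus) together with the intermediate value theorem, whereas you invoke non-atomicity of $\lambda$ via sublevel sets of a Morse function; your version also supplies details the paper leaves implicit (the connectedness argument for non-continuity of the density of $\tau$, and the $L_2$-sphere picture for the geodesic property across degenerate instants).
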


\begin{proof}
We set $\mu= p \lambda$ and $\mu_1= p_1 \lambda$ with normalized geometric mean $\varphi(\mu,\mu_1)$ and set $\ell = \ell(\mu,\mu_1)$.
Here $p$, $p_1\in C^0_+(M)$.
Consider the geometric mean of $\mu$ and $\mu_1$, $\displaystyle \cos \frac{\ell}{2}\, \varphi(\mu,\mu_1)$, which is a measure given by $\sqrt{p(x) p_1(x)}\ \lambda$. 
Let $q_0(x)$ be the density function of $\varphi(\mu,\mu_1)$ with respect to $\lambda$, a positive continuous function on $M$.
Thus, $\displaystyle \cos\frac{\ell}{2} q_0(x)=\sqrt{p(x) p_1(x)}$.
We have $\displaystyle \int_M q_0(x) d\lambda =1$.
Choose a point $x_0\in M$ and let $C_{x_0}$ be the cut locus with respect to $x_0$.
Here, $\dim C_{x_0} \leq \dim M - 1$ so $C_{x_0}$ is a measure zero set with respect to $\lambda$.
For the notion and geometrical properties of cut locus refer to \cite{Sakai}.
Via the exponential map $\exp_{x_0}$, $M \setminus C_{x_0}$ is diffeomorphic to a domain $D$ of $T_{x_0}M$.
$D$ is bounded, since $M$ is compact so that there exists $R > 0$ such that $D \subset {\it B}_0(R)$, where ${\it B}_0(R)$ is the euclidean ball of radius $R$ in $T_{x_0}M$ with respect to the euclidean metric.
Let $\sigma$ be the Lebesgue's measure on $T_{x_0}M$ and identify $\sigma$ with $((\exp_{x_0})^{-1})^{\ast}\sigma$ on $M \setminus C_{x_0}$.
Then, the measure $\lambda$ restricted to $M\setminus C_{x_0}$ is represented by $\lambda\vert_{ M\setminus C_{x_0}}= f\,\sigma\vert_D$ for a positive smooth function $f$ on $D$.
The integral $\displaystyle{\int_M d \varphi(\mu,\mu_1)}$ reduces to
\begin{align*}
\int_M d \varphi(\mu,\mu_1) 
=& \int_{M\setminus C_{x_0}} q_0(x)\,d \lambda = \int_{u\in D} q_0(\exp_{x_0}u) f(u)\,d \sigma(u) \\
=& \int_{{\it B}_0(R)} \tilde{q}_0(u) \tilde{f}(u)\,d \sigma(u) = 1,
\end{align*}
where $\tilde{q}_0$ and $\tilde{f}$ are the functions on ${\it B}_0(R)$, the natural extension of $q_0(\exp_{x_0}u)$ and $f(u)$, respectively, as $\tilde{q}_0 \equiv 0$, $f \equiv 0$ on $B_0(R)\setminus D$. 

Consider the function $h$ of $r$\, given by $\displaystyle h(r) := \int_{{\it B}_0(r)} \tilde{q}_0(u) \tilde{f}(u) d \sigma(u)$ for $0 \leq r \leq R$.
It is not hard to see that $h$ is an increasing continuous function and $h(0) = 0$ holds.
By the mean value theorem for continuous functions there exists an $r_0 > 0$ such that $h(r_0)= 1/2$.
Define $\tau_1$ by
\begin{equation*}
\tau_1(x) = \left\{\begin{array}{ll}
\tilde{q}_0(u) \lambda(x); & x = \exp_{x_0}u,\, u \in B_0(r_0),\\
- \tilde{q}_0(u) \lambda(x); & x= \exp_{x_0}u,\, u\in B_0(R)\setminus B_0(r_0).\end{array}\right.
\end{equation*}
Notice
\begin{equation*}
\int_{\exp_{x_0} B_0(R)\setminus B_0(r_0)} d \tau_1 = - \int_{\exp_{x_0} B_0(R)\setminus B_0(r_0)} \tilde{q}_0(u) d\lambda(x) = -\left(1 - \frac{1}{2}\right) = - \frac{1}{2}.
\end{equation*}
Therefore, the measure $\displaystyle \cos \frac{\ell}{2}\,\tau_1$ belongs to the tangent space at $\mu$ and is of unit norm, since 
\begin{equation*}
\int_M \cos \frac{\ell}{2}\ d \tau_1 = \cos \frac{\ell}{2}\left(\int_{\exp_{x_0}{\it B}_0(r_0)} d \tau_1 + \int_{\exp_{x_0}{\it B}_0(R)\setminus{\it B}_0(r_0)} d \tau_1 \right)= \cos \frac{\ell}{2}\left( \frac{1}{2} - \frac{1}{2} \right) = 0
\end{equation*}
and $G_{\mu}(\cos \frac{\ell}{2}\,\tau_1,\cos \frac{\ell}{2}\,\tau_1)$ is given by
\begin{align*}
\cos^2 \frac{\ell}{2}\, \int_M \left(\frac{d\tau_1}{d\mu}\right)^2 d\mu
=& \cos^2 \frac{\ell}{2}\,\int_M \left(\frac{\pm q_0(x)}{p(x)}\right)^2 p(x)d\lambda\\
=& \int_M \frac{p(x)p_1(x)}{p(x)} d\lambda = \int_M p_1(x)d\lambda = \int_M d\mu_1 = 1.
\end{align*}
Set
\begin{equation*}
\tilde{\gamma}(t) = \left( \cos \frac{t}{2} + \sin \frac{t}{2}\, \cos\frac{\ell}{2}\,\frac{d\tau_1}{d\mu}\right)^2\, \mu.
\end{equation*}
Then $\tilde{\gamma}(t)$ gives a geodesic in the space $\tilde{\mathcal{P}}_{(L_2,\lambda)}(M)$.
It satisfies $\displaystyle \tilde{\gamma}(0)=\mu$ and $\displaystyle{\tilde{\gamma}(\pi) = \cos^2\frac{\ell}{2} \left(\frac{d \tau_1}{d \mu} \right)^2 \mu =\mu_1}$.
In fact, 
\begin{equation*}
\cos^2\frac{\ell}{2} \left(\frac{d \tau_1}{d \mu} \right)^2 \mu = \left(\frac{\pm q_0(x)}{p(x)}\right)^2 p(x)\lambda = \frac{q_0(x)^2}{p(x)} \lambda = \frac{p(x)p_1(x)}{p(x)}\lambda =\mu_1.
\end{equation*}
One finds easily $\displaystyle \cos\frac{\ell}{2}\tau_1\in L_2$ with respect to $\lambda$.
Thus the proposition is verified.
\end{proof}

\begin{rem}
It is not hard 
to see that $\tilde{\gamma}(t)$ gives also a geodesic in the space $\tilde{\mathcal{P}}_{(L_1,\lambda)}(M)$ with initial tangent vector having $L_1$-integrable density function.
\end{rem}

\subsection{Proofs of Theorems \ref{main1} and \ref{maincor}}

Now we return back to our main subject.
First, we prove Theorem \ref{main1}.
Substituting \eqref{Fgeodesic_iv} into \eqref{Fgeodesic}, we have
\begin{align}
\gamma(t)
=&\left\{\cos\frac{t}{2}+\sin\frac{t}{2}\cdot\frac{1}{\sin(l/2)}\left(
\sqrt{\frac{d\mu_1}{d\mu}}-\cos\frac{l}{2}
\right)\right\}^2\mu\notag\\
=&\left\{
\frac{\cos(t/2)\cdot\sin(l/2)-\sin(t/2)\cos(l/2)}{\sin(l/2)}
+\frac{\sin(t/2)}{\sin(l/2)}\sqrt{\frac{d\mu_1}{d\mu}}
\right\}^2\mu\notag\\
=&\left\{
\frac{\sin(l-t)/2}{\sin(l/2)}
+\frac{\sin(t/2)}{\sin(l/2)}\sqrt{\frac{d\mu_1}{d\mu}}
\right\}^2\mu\notag\\
=&\left(
\frac{\sin(l-t)/2}{\sin(l/2)}\right)^2\mu
+\frac{2\sin(t/2)\cdot\sin(l-t)/2}{\sin^2(l/2)}\sqrt{\frac{d\mu_1}{d\mu}}\,\mu
+\left(\frac{\sin(t/2)}{\sin(l/2)}\right)^2\mu_1.\label{expression-g4}
\end{align}
The second term in the last is represented as
\begin{equation*}
\frac{2 \sin (t/2) \cos (l/2) \sin(l-t)/2}{\sin^2 l/2} \varphi(\mu_1,\mu) = a_3(t) \varphi(\mu_1,\mu),
\end{equation*}
since from Definitions \ref{normalizedgeometricmean} and \ref{ellfunction} one has
\begin{equation*}
\sqrt{\frac{d\mu_1}{d\mu}}\ \mu = \cos \left(\frac{l}{2}\right)\, \varphi(\mu_1,\mu).
\end{equation*}
On the other hand the first and third terms are written as $a_1(t) \mu$ and $a_2(t) \mu_1$, respectively. Therefore we obtain the form \eqref{expression-g}.
Since $\gamma(0)=\mu$, $\gamma(\ell(\mu, \mu_1))=\mu_1$, easy computations show us 
\begin{equation*}
a_1(t)+a_2(t)+a_3(t)=\int_M a_1(t)\,d \mu + a_2(t) d\,\mu_1 + a_3(t)\,d \varphi(\mu_1,\mu) = \int_M d\ \gamma(t) = 1.
\end{equation*}

Moreover, it is obvious that $a_i(t)\ge 0,\ i=1,2,3$ for $0\le t\le l<\pi$.
Hence, we conclude that $\gamma(t)$ belongs to $\mathcal{P}(M)$ for any $t\in [0,l]$, which means that $\gamma$ is the geodesic being inside $\mathcal{P}(M)$ and joining $\mu, \mu_1\in \mathcal{P}(M)$.
Thus, we obtain Theorem \ref{maincor} (i) and (ii).

\begin{rem}\label{tangentline_gm}
Equation \eqref{Fgeodesic_iv-2} implies that the tangent line of $\gamma(t)$ at $\gamma(0)=\mu$, which is a curve in $\mathcal{P}(M)$, passes through the normalized geometric mean $\varphi(\mu, \mu_1)$.
Now, we consider the geodesic $\gamma_-(t)=\gamma(l-t)$ which has inverse direction of $\gamma$.
Then, $\gamma_-(l)=\mu$ and $\displaystyle{\dot{\gamma}_-(0)=\frac{1}{\tan\frac{l}{2}}\left(\varphi(\mu_1, \mu)-\mu_1\right) }$.
Hence, similarly as $\gamma$, the tangent line of $\gamma_-(t)$ at $\gamma_-(0)=\gamma(l)=\mu_1$ also passes through $\varphi(\mu, \mu_1)$.
Thus, we obtain Theorem \ref{maincor} (iii) and more generally the following.
\end{rem}

\begin{thm}\label{geometricm_gm}
Let $\mu$, $\mu_1$ be points on a geodesic $\gamma$.
Let $L_{\mu}$ and $L_{\mu_1}$ be the tangent lines, tangent to $\gamma$ at $\mu$ and $\mu_1$, respectively.
Then, the lines $L_{\mu}$ and $L_{\mu_1}$ intersect and their intersection point is the normalized geometric mean $\varphi(\mu,\mu_1)$ of $\mu$ and $\mu_1$.
 (See Figure \ref{geochara_gm-fig}).
\end{thm}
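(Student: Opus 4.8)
The plan is to pin down the two tangent lines explicitly and then observe that they share the point $\varphi(\mu,\mu_1)$; the whole proof is a bootstrapping of the initial-velocity formula \eqref{Fgeodesic_iv-2}. I would first reduce the statement to a geodesic segment joining $\mu$ and $\mu_1$. Reparametrize $\gamma$ by arc length and, reversing orientation if necessary, arrange $\gamma(0)=\mu$ and $\gamma(l')=\mu_1$ with $0<l'<2\pi$ (possible since $\gamma$ has period $2\pi$). Writing $\gamma(t)=(\cos(t/2)+g\sin(t/2))^2\mu$ as in Proposition~\ref{densityfree}, with $g=d\dot\gamma(0)/d\mu$ and $\int_M g\,d\mu=0$, the requirement that every $\gamma(t)$, $t\in[0,l']$, lie in $\mathcal{P}(M)$ forces $x\mapsto\cos(t/2)+g(x)\sin(t/2)$ to be nonvanishing on $[0,l']\times M$, hence---since this set is connected and the function is positive at $t=0$---positive throughout. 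Consequently the ``$+$'' branch of the square root must be taken at $t=l'$, and integrating $\cos(l'/2)+g\sin(l'/2)=\sqrt{d\mu_1/d\mu}$ against $\mu$ gives $\cos(l'/2)=C_H(\mu,\mu_1)>0$, whence $l'=\ell(\mu,\mu_1)=:l\in(0,\pi)$ and $\gamma|_{[0,l]}$ is exactly the geodesic segment of Theorems~\ref{main1} and \ref{uniquenessthm}. In particular \eqref{Fgeodesic_iv-2} reads
$$
\dot\gamma(0)=\frac{1}{\tan(l/2)}\bigl(\varphi(\mu,\mu_1)-\mu\bigr).
$$

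Next I would read off the tangent lines. Since $\tan(l/2)\neq0$, the vector $\dot\gamma(0)$ is a nonzero scalar multiple of $\varphi(\mu,\mu_1)-\mu$, so $L_\mu=\{\mu+s\,\dot\gamma(0)\mid s\in\mathbb{R}\}$ is the affine line through $\mu$ and $\varphi(\mu,\mu_1)$. Applying the same reasoning to the reversed geodesic $t\mapsto\gamma(l-t)$, which starts at $\mu_1$ and ends at $\mu$, its initial velocity is $\tan(l/2)^{-1}\bigl(\varphi(\mu_1,\mu)-\mu_1\bigr)$, and $\varphi(\mu_1,\mu)=\varphi(\mu,\mu_1)$ by symmetry of $\varphi$; hence $L_{\mu_1}$ is the affine line through $\mu_1$ and $\varphi(\mu,\mu_1)$. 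Both lines therefore contain $\varphi(\mu,\mu_1)$. To see that this is their only common point, note that $\mu$, $\mu_1$, $\varphi(\mu,\mu_1)$ are affinely independent: on the connected manifold $M$ the continuous function $\sqrt{d\mu_1/d\mu}$ cannot satisfy a nontrivial affine relation with $1$ and $d\mu_1/d\mu$ unless it is constant, i.e. unless $\mu=\mu_1$ (cf. Theorem~\ref{maincor}(ii)); thus $L_\mu\neq L_{\mu_1}$, two distinct affine lines have at most one common point, and so $L_\mu\cap L_{\mu_1}=\{\varphi(\mu,\mu_1)\}$, as claimed.

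The one delicate point is the reduction step---ruling out that the arc of $\gamma$ between $\mu$ and $\mu_1$ runs ``the long way around'' (parameter gap $\geq\pi$)---which is exactly what the positivity/sign argument above settles, and it is the step where compactness of $M$ and continuity of the densities enter. Everything downstream of that is a direct substitution into \eqref{Fgeodesic_iv-2} together with the identity $\varphi(\mu_1,\mu)=\varphi(\mu,\mu_1)$.
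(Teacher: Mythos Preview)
Your argument is correct and follows essentially the same path as the paper's (Remark~\ref{tangentline_gm}): apply \eqref{Fgeodesic_iv-2} at $\mu$, then apply it to the reversed geodesic $\gamma_-(t)=\gamma(l-t)$ at $\mu_1$, and invoke the symmetry $\varphi(\mu_1,\mu)=\varphi(\mu,\mu_1)$. You are more thorough---the paper does not spell out the reduction showing that the arc between $\mu$ and $\mu_1$ has length exactly $\ell(\mu,\mu_1)$, nor the affine-independence check for uniqueness of the intersection---but the core idea is identical.
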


\begin{figure}[htbp]
\includegraphics[width=8.5cm]{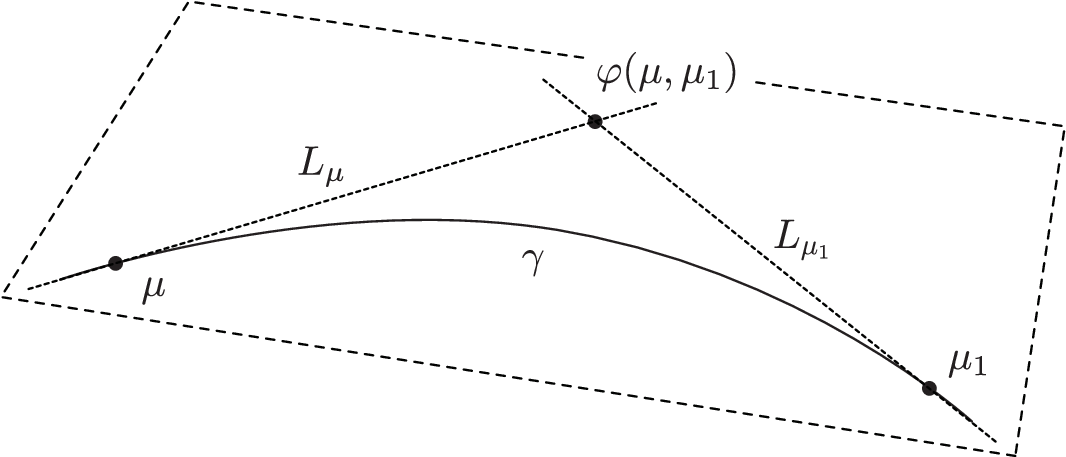}
\caption{a geometric characterization of $\varphi(\mu, \mu_1)$.}
\label{geochara_gm-fig}
\end{figure}

Substituting $t=l/2$ into \eqref{expression-g4}, we have
$$
\gamma(l/2)
=\left(
\frac{\sin(l/4)}{\sin(l/2)}\right)^2\left\{\mu
+2\sqrt{\frac{d\mu_1}{d\mu}}\,\mu+\mu_1\right\}
=\left(
\frac{1}{2\cos(l/4)}\right)^2\left\{1+\sqrt{\frac{d\mu_1}{d\mu}}\right\}^2\mu,
$$
from which we obtain Theorem \ref{maincor} (iv).

\begin{rem}
All the above arguments concerning with geodesics, the map $\varphi$ and the function $\ell$ are completely valid for the space $\mathcal{P}^{\infty}(M)$ of probability measures with smooth density function.
$\mathcal{P}^{\infty}(M)$  is dense in the space  $\mathcal{P}(M)$ (see Lemma \ref{dense}). 
\end{rem}

\section{Continuity of the map $\varphi$ and the function $\ell$}\label{continuity}

In this section we will show the following result.

\begin{prop}
Relative to the 
$\|\sqrt{\cdot}-\sqrt{\cdot}\|_{L_2}$--topology,
\begin{enumerate}
\item $\varphi : \mathcal{P}(M)\times \mathcal{P}(M) \rightarrow \mathcal{P}(M)$ is continuous and
\item $\ell : \mathcal{P}(M)\times \mathcal{P}(M) \rightarrow [0,\pi)$ is continuous.
\end{enumerate}
\end{prop}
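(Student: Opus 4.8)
\medskip\noindent
\emph{Proof sketch (plan).} The plan is to push everything over to $L_2(M,\lambda)$ through the embedding $\rho:\mu=p\lambda\mapsto\sqrt p$, which \emph{is} (by construction) a homeomorphism of $\mathcal P(M)$, endowed with the $\|\sqrt{\cdot}-\sqrt{\cdot}\|_{L_2}$-topology, onto $\rho(\mathcal P(M))\subset L_2(M,\lambda)$. Writing $f_i:=\rho(\mu_i)$, each $f_i$ is a nonnegative unit vector of $L_2$, the map $(\mu_1,\mu_2)\mapsto(f_1,f_2)$ is a homeomorphism onto its image, and the Hellinger coefficient becomes the $L_2$-inner product, $C_H(\mu_1,\mu_2)=(f_1,f_2)_{L_2}$. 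So it suffices to prove that $(f_1,f_2)\mapsto\rho(\varphi(\mu_1,\mu_2))$ is continuous into $L_2$, and that $(f_1,f_2)\mapsto\ell(\mu_1,\mu_2)$ is continuous.

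Part (ii) is the quick one. The inner product is jointly continuous on the set of unit vectors: if $f_i^{(n)}\to f_i$ in $L_2$ then, by Cauchy--Schwarz, $|(f_1^{(n)},f_2^{(n)})-(f_1,f_2)|\le\|f_1^{(n)}-f_1\|_{L_2}\|f_2^{(n)}\|_{L_2}+\|f_1\|_{L_2}\|f_2^{(n)}-f_2\|_{L_2}\to0$. Since $0<C_H(\mu_1,\mu_2)\le1$ and $\arccos:[-1,1]\to[0,\pi]$ is continuous, $\ell=2\arccos C_H$ is continuous with values in $[0,\pi)$.

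For part (i), the first observation is that $\rho(\varphi(\mu_1,\mu_2))=C_H(\mu_1,\mu_2)^{-1/2}\sqrt{f_1f_2}$: the density of $\varphi(\mu_1,\mu_2)$ is $C_H^{-1}\sqrt{p_1p_2}$, so its square root is $C_H^{-1/2}(p_1p_2)^{1/4}=C_H^{-1/2}\sqrt{f_1f_2}$. The key step is then a sub-lemma: the pointwise geometric mean $(f,g)\mapsto\sqrt{fg}$ is continuous from nonnegative elements of $L_2$ into $L_2$. This follows from the elementary inequality $(\sqrt a-\sqrt b)^2\le|a-b|$ for $a,b\ge0$: for nonnegative $f,g,\tilde f,\tilde g\in L_2$,
\[
\|\sqrt{fg}-\sqrt{\tilde f\tilde g}\|_{L_2}^2\le\int_M|fg-\tilde f\tilde g|\,d\lambda\le\int_M|f-\tilde f|\,g\,d\lambda+\int_M\tilde f\,|g-\tilde g|\,d\lambda\le\|f-\tilde f\|_{L_2}\|g\|_{L_2}+\|\tilde f\|_{L_2}\|g-\tilde g\|_{L_2},
\]
where we used $|fg-\tilde f\tilde g|\le|f-\tilde f|g+\tilde f|g-\tilde g|$ and Cauchy--Schwarz. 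Applying this with $(f,g)=(f_1,f_2)$ (unit vectors) shows $(f_1,f_2)\mapsto\sqrt{f_1f_2}\in L_2$ is continuous; by part (ii) the scalar $C_H(\mu_1,\mu_2)^{-1/2}$ is a continuous positive function of $(\mu_1,\mu_2)$; and the product of a convergent scalar sequence (with bounded limit) by an $L_2$-convergent sequence converges in $L_2$. Hence $\rho\circ\varphi$ is continuous, and therefore so is $\varphi$, since $\rho$ is a topological embedding.

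The points that deserve care -- and are really the only obstacles -- are, first, that the nonlinearity in $\varphi$ is a fourth root $(p_1p_2)^{1/4}$, which we tame by peeling off one square root at a time via $(\sqrt a-\sqrt b)^2\le|a-b|$, turning an $L_2$ estimate into an $L_1$ estimate that Cauchy--Schwarz controls; and second, that the normalizing factor $C_H^{-1/2}$ is not bounded below uniformly on $\mathcal P(M)\times\mathcal P(M)$, which is harmless because continuity is checked at a fixed pair $(\mu_1,\mu_2)$, where $C_H(\mu_1,\mu_2)>0$. Everything else is routine.
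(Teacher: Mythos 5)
Your proposal is correct and follows essentially the same route as the paper: both proofs reduce everything to $L_2$ via $\rho$, prove (ii) by bounding $|C_H(\mu_1,\mu_2)-C_H(\mu_1',\mu_2')|$ with Cauchy--Schwarz, and prove (i) by combining the inequality $(\sqrt a-\sqrt b)^2\le|a-b|$ with the decomposition $|fg-\tilde f\tilde g|\le|f-\tilde f|\,g+\tilde f\,|g-\tilde g|$. The only difference is organizational -- you factor the normalization $C_H^{-1/2}$ out as a separately controlled scalar, while the paper absorbs it into a single estimate on $\int_M|P-P'|\,d\lambda$ -- and your handling of the (locally bounded below) normalizer is sound.
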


\begin{proof}
We will show first (ii). Since the function $\rm{arccosine}$ is continuous, it suffices to verify that 
\begin{equation}
\cos \frac{\ell(\mu,\mu_1)}{2} = \int_M \sqrt{p(x) p_1(x)} d\lambda\qquad
(\mu=p(x)\lambda,\ \mu_1=p_1(x)\lambda)
\end{equation}
is continuous.
For this we find the following with another pair of measures $\mu'=p'(x)\lambda$, $\mu_1'=p_1'(x)\lambda$ of $\mathcal{P}(M)$, by applying the Cauchy-Schwarz inequality
\begin{align}\label{cosellfunction}
\left\vert \cos \frac{\ell(\mu,\mu_1)}{2}-\cos \frac{\ell(\mu',\mu_1')}{2}\right\vert
\leq& 
\int_M \left(\sqrt{p}\left\vert\sqrt{p_1}-\sqrt{p_1'}\right\vert + \sqrt{p_1'}\left\vert\sqrt{p}-\sqrt{p'}\right\vert \right)d\lambda \\ \nonumber
\leq& \left\|\sqrt{p}-\sqrt{p'}\right\|_{L_2} + \left\|\sqrt{p_1}-\sqrt{p'_1}\right\|_{L_2}.
\end{align}
From this it follows that $\cos \ell(\mu,\mu_1)/2$ is continuous.

We will next see that the map $\varphi$ is continuous.
As same as just above, let $\mu=p(x)\lambda$, $\mu_1=p_1(x)\lambda$, $\mu'=p'(x)\lambda$ and $\mu_1'=p_1'(x)\lambda$\, $\in \mathcal{P}(M)$.
We write $\varphi(\mu,\mu_1) = P(x)\lambda$ and $\varphi(\mu',\mu'_1) = P'(x)\lambda$, where 
\begin{equation}
P(x)= \frac{\sqrt{p(x)p_1(x)}}{\int_M \sqrt{p(x)p_1(x)}d\lambda},\quad
P'(x)= \frac{\sqrt{p'(x)p'_1(x)}}{\int_M \sqrt{p'(x)p'_1(x)}d\lambda}.
\end{equation}
We have then, by using the inequality $\left\vert \sqrt{a}-\sqrt{b}\right\vert^2 \leq \vert a - b\vert$ for any $a,b\geq 0$
\begin{equation}
\left\| \sqrt{P}- \sqrt{P'}\right\|_{L^2}^2 = \int_M\left(\sqrt{P}- \sqrt{P'}\right)^2 d\lambda \leq \int_M \left\vert P(x)- P'(x)\right\vert d \lambda.
\end{equation}
Here
\begin{align}
P(x)-P'(x)
=& \frac{\sqrt{p(x)p_1(x)}- \sqrt{p'(x)p_1'(x)}}{\int_M \sqrt{pp_1}d\lambda}+\frac{\int_M\left(\sqrt{p'p'_1}-\sqrt{pp_1}\right)d\lambda}{\int_M\sqrt{pp_1}d\lambda \int_M\sqrt{p'p'_1}d\lambda}\, \sqrt{p'(x)p_1'(x)},
\end{align}
so
\begin{multline}
\left\vert P(x)-P'(x)\right\vert
\leq\frac{\sqrt{p(x)}\left\vert\sqrt{p_1(x)}-\sqrt{p_1'(x)}\right\vert +\left\vert\sqrt{p(x)}-\sqrt{p'(x)}\right\vert\sqrt{p_1'(x)}}{\int_M\sqrt{pp_1}d\lambda} \\
+ \frac{\int_M\left\{\sqrt{p}\left\vert\sqrt{p_1}-\sqrt{p_1'}\right\vert+\left\vert\sqrt{p}-\sqrt{p'}\right\vert\sqrt{p_1'}\right\}d\lambda}{\int_M\sqrt{pp_1}d\lambda\, \int_M \sqrt{p'p'_1} d\lambda}\, \sqrt{p'(x)p_1'(x)}
\end{multline}
and hence
\begin{multline*}
\int_M\left\vert P(x)-P'(x)\right\vert d \lambda \\
\leq \frac{2}{\int_M \sqrt{pp_1}d\lambda}\,\int_M\left\{\sqrt{p(x)}\left\vert\sqrt{p_1(x)}-\sqrt{p_1'(x)}\right\vert + \left\vert\sqrt{p(x)}-\sqrt{p'(x)}\right\vert\sqrt{p_1'(x)}\right\} d\lambda.
\end{multline*}
From the Cauchy-Schwarz inequality one gets
\begin{equation}
\| \sqrt{P}-\sqrt{P'}\|_{L^2}^2 \leq \frac{2}{\int_M \sqrt{pp_1} d\lambda}\left(\left\|\sqrt{p}-\sqrt{p'}\right\|_{L_2}+ \left\|\sqrt{p_1}-\sqrt{p_1'}\right\|_{L_2}\right)
\end{equation}
which indicates that $\varphi$ is continuous.
\end{proof}

\section{Riemannian distance function of $(\mathcal{P}(M), G)$}
\label{exponential}

In this section we will exhibit that $\ell(\mu, \mu_1)$ is precisely the Riemannian distance of $\mu$ and $\mu_1$ in $\mathcal{P}(M)$.
For this purpose we first restrict our argument to $\mathcal{P}^{\infty}(M)$, 
the space of probability measures with smooth density function.
We define the exponential map over $\mathcal{P}^{\infty}(M)$.
We prove, then, Gauss lemma, the existence of a totally normal neighborhood in $\mathcal{P}^{\infty}(M)$ with respect to the Fisher metric $G$ and show that $\ell(\mu, \mu_1)$ gives the Riemannian distance in $\mathcal{P}^{\infty}(M)$ for $\mu, \mu_1\in \mathcal{P}^{\infty}(M)$.
We prove secondly that $\mathcal{P}^{\infty}(M)$ is dense in $\mathcal{P}(M)$ with respect to the $C^0$-norm (Lemma \ref{dense}) so that the Riemannian distance of $\mu,\mu_1\in \mathcal{P}^{\infty}(M)$ in the space $\mathcal{P}(M)$ is actually given by the function $\ell(\mu,\mu_1)$ by the aid of reductio ad absurdum.
Finally we verify that  $\ell(\mu,\mu_1)$ is properly the Riemannian distance of $\mu,\mu_1$ in the space $\mathcal{P}(M)$.

For the sake of convenience we provide $\mathcal{P}^{\infty}(M)$ an $H^a_1$-topology, $a > n$, $n = \dim M$. We equip the compact manifold $M$ with a Riemannian metric  whose Riemannian volume form coincides with the measure $\lambda$. 
The Sobolev norm $\|\cdot\|_{H^a_1}$ 
is defined by $\| f\|_{H^a_1}:= \| f\|_{L_a}+ \| \nabla f \|_{L_a}$, $f\in C^{\infty}(M)$.
From the Sobolev embedding theorem there exists a constant $C(a)>0$ such that for all $f\in C^{\infty}(M)$ $\displaystyle \|f\|_{C^0}(:=\sup_{x\in M} \vert f(x)\vert) \leq C(a) \|f\|_{H^a_1}$.
See \cite[\S 7 and 2.22\, (11)]{Aubin}.
Notice that the $\|\sqrt{\cdot}-\sqrt{\cdot}\|_{(L_2,\lambda)}$-norm is related to the $H^1_a$-norm from  H\"older inequality as
 \begin{eqnarray*}
 \|\sqrt{p}-\sqrt{p_1}\|_{(L_2,\lambda)} \leq \|p-p_1\|_{L_1}^{1/2}\leq \|p-p_1\|_{L_a}^{1/2}\leq \|p-p_1\|_{H^1_a}^{1/2}.
 \end{eqnarray*}
 

\subsection{Exponential map on $\mathcal{P}^{\infty}(M)$}

Let $\mu\in\mathcal{P}^{\infty}(M)$.
Let $\tau\in T_\mu\mathcal{P}^{\infty}(M)$ be a tangent vector at $\mu\in\mathcal{P}^{\infty}(M)$ and suppose that there exists a geodesic $\gamma : [0,1] \rightarrow \mathcal{P}^{\infty}(M)$ satisfying $\gamma(0) = \mu, \dot{\gamma}(0) = \tau$.
Then $\gamma(1)\in\mathcal{P}^{\infty}(M)$ will be customarily denoted by $\exp_\mu \tau$.
The geodesic $\gamma$ can thus be written by
\begin{equation*}
\gamma(t) = \exp_\mu t\tau.
\end{equation*}

\begin{lem}\label{Fgeodesic_4}
For any $\mu_1\in\mathcal{P}^{\infty}(M)$, $\mu_1 \ne \mu$,
there exists a geodesic $\gamma : [0,1] \rightarrow \mathcal{P}^{\infty}(M)$  satisfying $\gamma(0) = \mu, \gamma(1) = \mu_1$ by setting
\begin{equation}\label{geodesic_4}
\gamma(t)=\exp_\mu t\tau
=\left(\cos l\hspace{0.5mm}\frac{t}{2}+\frac{1}{l}\cdot\sin l\hspace{0.5mm}\frac{t}{2}\cdot\frac{d\tau}{d\mu}\right)^2\mu,
\end{equation}
where $\tau\in T_\mu\mathcal{P}^{\infty}(M)$ defined by $\tau = l\tilde{\tau}, l = \ell(\mu, \mu_1)$ and $\tilde{\tau}\in T_\mu\mathcal{P}^{\infty}(M)$ is a unit tangent vector defined by
\begin{equation}\label{geodesic_4_iv}
\tilde{\tau} =\frac{1}{\tan\frac{l}{2}}\left(\varphi(\mu, \mu_1)-\mu
\right).
\end{equation}
\end{lem}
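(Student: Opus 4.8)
The plan is to obtain Lemma \ref{Fgeodesic_4} essentially by rescaling the arc-length parametrization from Theorem \ref{uniquenessthm}. By that theorem, for distinct $\mu, \mu_1\in\mathcal{P}^\infty(M)$ there is a unique unit-speed geodesic $\sigma : [0,l] \rightarrow \mathcal{P}(M)$ with $\sigma(0)=\mu$, $\sigma(l)=\mu_1$, given by $\sigma(s)=\left(\cos\frac{s}{2}+\frac{d\tilde\tau}{d\mu}\sin\frac{s}{2}\right)^2\mu$ where $l=\ell(\mu,\mu_1)$ and, via \eqref{Fgeodesic_iv-2}, $\tilde\tau = \frac{1}{\tan(l/2)}(\varphi(\mu,\mu_1)-\mu)$. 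First I would set $\gamma(t):=\sigma(lt)$ for $t\in[0,1]$; reparametrizing a geodesic affinely yields a geodesic, so $\gamma$ is a geodesic with $\gamma(0)=\mu$, $\gamma(1)=\mu_1$, and $\dot\gamma(0)=l\,\dot\sigma(0)=l\,\tilde\tau=:\tau$, which is exactly \eqref{geodesic_4_iv}. Substituting $s=lt$ into the formula for $\sigma$ gives $\gamma(t)=\left(\cos\frac{lt}{2}+\frac{d\tilde\tau}{d\mu}\sin\frac{lt}{2}\right)^2\mu$, and writing $\tilde\tau = \tau/l$ turns this into \eqref{geodesic_4}.

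The remaining point is to check that $\gamma$ stays inside $\mathcal{P}^\infty(M)$, i.e.\ that the density $p_t$ of $\gamma(t)$ is smooth and strictly positive for every $t\in[0,1]$. Positivity follows from Theorem \ref{maincor}(i): the convex-combination expression \eqref{expression-g} shows $\gamma(t)=a_1(t)\mu+a_2(t)\mu_1+a_3(t)\varphi(\mu,\mu_1)$ with $a_i(t)\ge 0$ and $a_1(t)+a_2(t)+a_3(t)=1$ on $[0,l]$, and $a_1(0)=1$, $a_2(l)=1$, so on the relevant parameter range at least one coefficient is strictly positive at each point while the densities $p,p_1$ and the density of $\varphi(\mu,\mu_1)$ are all strictly positive; hence $p_t>0$ everywhere. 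For smoothness I would note that $p, p_1\in C^\infty(M)$ implies $\sqrt{d\mu_1/d\mu}=\sqrt{p_1/p}\in C^\infty(M)$ (as $p>0$), so $d\tilde\tau/d\mu$ is smooth, and therefore $p_t = \left(\cos\frac{lt}{2}+\frac{d\tilde\tau}{d\mu}\sin\frac{lt}{2}\right)^2 p$ is smooth in $x$ for each fixed $t$; thus $\gamma(t)\in\mathcal{P}^\infty(M)$.

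Finally I would record that $\tau\in T_\mu\mathcal{P}^\infty(M)$: it has smooth density (product of $p$ with the smooth function $\frac{l}{\tan(l/2)}(\sqrt{p_1/p}-1)$) and satisfies $\int_M d\tau = 0$, the latter because $\int_M \sqrt{p_1/p}\,d\mu = \cos(l/2)$ by \eqref{Fgeodesic_l}, which forces the mean of $d\tilde\tau/d\mu$ to vanish. Combining these observations gives the lemma. I do not anticipate a genuine obstacle here: the content is a direct consequence of Theorem \ref{uniquenessthm} and Theorem \ref{maincor}, and the only care needed is the bookkeeping that smoothness and positivity of $p,p_1$ propagate through the explicit formula — the affine reparametrization $s=lt$ sends the interval $[0,l]$ onto $[0,1]$, so no values outside the already-controlled range are introduced.
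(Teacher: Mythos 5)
Your proposal is correct and follows essentially the same route as the paper: the paper also takes the unit-speed geodesic $\tilde\gamma(t)=\bigl(\cos\frac{t}{2}+\sin\frac{t}{2}\frac{d\tilde\tau}{d\mu}\bigr)^2\mu$ from Proposition \ref{densityfree} with initial velocity \eqref{geodesic_4_iv} and reparametrizes via $t=ls$, $\tau=l\tilde\tau$ to obtain \eqref{geodesic_4}. Your extra checks that positivity and smoothness of the density propagate through the formula (so that $\gamma(t)$ stays in $\mathcal{P}^{\infty}(M)$) are left implicit in the paper but are consistent with Theorem \ref{maincor}(i) and the explicit expression.
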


\begin{proof}
From Proposition \ref{densityfree},
\begin{equation*}
\tilde{\gamma}(t) =\left(\cos\frac{t}{2} + \sin\frac{t}{2}\frac{d\tilde{\tau}}{d\mu}\right)^2\mu
\end{equation*}
with $\tilde{\tau}$ of \eqref{geodesic_4_iv} gives us a geodesic, parametrized by arc-length, satisfying $\tilde{\gamma}(0) = \mu$, $\dot{\tilde{\gamma}}(0) = \tilde{\tau}$ and $\tilde{\gamma}(l) = \mu_1$.

Put $\tau = l\tilde{\tau}$ and $t = ls$ and set $\gamma(s) = \tilde{\gamma}(ls)$.
Then, $\gamma(s)$ is a geodesic defined over $[0, 1]$, which has the form \eqref{geodesic_4}.
It is straightforward to see that $\gamma(0) = \mu, \gamma(1) = \mu_1$ and $\dot{\gamma}(0) = \tau$.
\end{proof}

Let $\mu \in \mathcal{P}^{\infty}(M)$ be a probability measure of positive smooth density function.
We fix $\mu$ for a moment.
Let $\varepsilon$ be a real number satisfying $0 < \varepsilon < \pi$ and let $B(\mu;\varepsilon)$ be a set of probability measures $\mu_1\in \mathcal{P}^{\infty}(M)$ satisfying $\ell(\mu,\mu_1) < \varepsilon$:
\begin{equation}\label{neighborhood}
B(\mu;\varepsilon) := \{\mu_1\in \mathcal{P}^{\infty}(M)\,|\, \ell(\mu,\mu_1) < \varepsilon\}.
\end{equation}

Let $0<\varepsilon_1<\pi$ and set
\begin{equation*}
\mathscr{B}(\mu; \varepsilon_1) :=
\left\{\tau \in T_\mu\mathcal{P}^{\infty}(M)\,\left|\,
|\tau|_{\mu} < \varepsilon_1,\ \inf_{x\in M}\frac{d\tau}{d\mu}(x)>-|\tau|_{\mu}\cot\frac{|\tau|_{\mu}}{2}
\right.
\right\}.
\end{equation*}

Note that when $\vert\tau\vert_{\mu}= 0$ we put $\displaystyle{\vert\tau\vert_{\mu}\, \cot \frac{\vert\tau\vert_{\mu}}{2} = 2}$. 
Take a Riemannian metric $g$ on $M$ whose Riemannian volume form $dv_g$ coincides with the measure $\mu$.
Then $\displaystyle\vert \tau\vert_{\mu} \leq \left\|\frac{d\tau}{d\mu}\right\|_{H^a_1}$ so that the map $\tau \mapsto \vert\tau\vert_{\mu}$ is continuous with respect to the $H^a_1$--topology.
Moreover, the inequality $\displaystyle\inf_{x\in M}\frac{d\tau}{d\mu}(x)>-|\tau|_{\mu}\cot\dfrac{|\tau|_{\mu}}{2}$ is also an open Sobolev norm condition in the following way.
Set $f = \dfrac{d\tau}{d\mu}$ and $f_- := \dfrac{f-\vert f\vert}{2}$.
Then $f_-(x) \leq 0$ for all $x\in M$ and $f_- \in C^0(M)$ so that the inequality is equivalent to the $C^0$-norm inequality: $\displaystyle{\|f_-\|_{C^0} < \vert\tau\vert_{\mu}\cot\frac{|\tau|_{\mu}}{2}
}$.
By using the mollifiers whose definition will be given at the proof of Lemma \ref{dense} one has a family of smooth functions $f_{-,s} \in C^{\infty}(M)$, $s>0$ for $f_-$ such that $\|f_{-,s}-f_-\|_{C^0} \rightarrow 0$ as $s \rightarrow 0$.
Therefore, the inequality with respect to the $H^a_1$-norm
\begin{equation*}
\|f_{-,s}\|_{H^a_1} < \frac{1}{C(a)} \vert\tau\vert_{\mu}\, \cot \frac{\vert\tau\vert_{\mu}}{2}
\end{equation*}
implies the required inequality, by the aid of the Sobolev embedding theorem,
since
\begin{equation}
\|f_-\|_{C^0} \leq \|f_{-,s}-f_-\|_{C^0}+ \|f_{-,s}\|_{C^0} <
\|f_{-,s}-f_-\|_{C^0}+ \vert\tau\vert_{\mu}\, \cot \frac{\vert\tau\vert_{\mu}}{2}
\end{equation}
in which the term $\|f_{-,s}-f_-\|_{C^0}$ is taken small as possible.

\begin{prop}\label{BtoB}
The exponential map $\exp_\mu : \mathscr{B}(\mu; \varepsilon) \rightarrow B(\mu;\varepsilon)$ defined by
\begin{equation*}
\exp_\mu\,\tau=\left(\cos\frac{|\tau|_{\mu}}{2}+\frac{1}{|\tau|_{\mu}}\sin\frac{|\tau|_{\mu}}{2}\cdot\frac{d\tau}{d\mu}\right)^2\mu
\end{equation*}
is a bijection.
\end{prop}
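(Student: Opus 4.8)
The plan is to show that $\exp_\mu$ maps $\mathscr{B}(\mu;\varepsilon)$ into $B(\mu;\varepsilon)$, that it is injective there, and that it is surjective onto $B(\mu;\varepsilon)$. For the target condition, take $\tau\in\mathscr{B}(\mu;\varepsilon)$ and write $l=|\tau|_\mu<\varepsilon$ and $f=d\tau/d\mu$; then $\exp_\mu\tau=\bigl(\cos\tfrac{l}{2}+\tfrac{1}{l}\sin\tfrac{l}{2}\,f\bigr)^2\mu$. The first task is to check that this is genuinely an element of $\mathcal{P}^\infty(M)$, i.e. that its density is positive: the density is positive precisely when $\cos\tfrac{l}{2}+\tfrac{1}{l}\sin\tfrac{l}{2}\,f(x)\neq 0$ for all $x$, and since $\int_M f\,d\mu=0$ the density is not identically zero, so positivity of $\cos\tfrac{l}{2}+\tfrac{1}{l}\sin\tfrac{l}{2}\,f$ at one point forces it everywhere (continuity plus connectedness, exactly as in the proof of Theorem \ref{uniquenessthm}). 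The inequality $\inf_x f(x) > -l\cot\tfrac{l}{2}$ in the definition of $\mathscr{B}(\mu;\varepsilon)$ is exactly the statement $\cos\tfrac{l}{2}+\tfrac{1}{l}\sin\tfrac{l}{2}\,f(x)>0$ (divide by $\tfrac{1}{l}\sin\tfrac{l}{2}>0$ for $0<l<\pi$, and handle $l=0$ by the stated convention), so positivity holds. Then, using $\int_M f\,d\mu=0$ and $\int_M f^2\,d\mu=l^2$, a direct computation of $\int_M\sqrt{d(\exp_\mu\tau)/d\mu}\;d\mu=\cos\tfrac{l}{2}$ gives $\ell(\mu,\exp_\mu\tau)=l<\varepsilon$, so indeed $\exp_\mu\tau\in B(\mu;\varepsilon)$, and moreover $|\tau|_\mu=\ell(\mu,\exp_\mu\tau)$.

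For injectivity, suppose $\exp_\mu\tau=\exp_\mu\tau'$ with $l=|\tau|_\mu$, $l'=|\tau'|_\mu$. Taking square roots of the densities, and using that both square roots $\cos\tfrac{l}{2}+\tfrac{1}{l}\sin\tfrac{l}{2}\,f$ and $\cos\tfrac{l'}{2}+\tfrac{1}{l'}\sin\tfrac{l'}{2}\,f'$ are positive, we get the pointwise identity $\cos\tfrac{l}{2}+\tfrac{1}{l}\sin\tfrac{l}{2}\,f=\cos\tfrac{l'}{2}+\tfrac{1}{l'}\sin\tfrac{l'}{2}\,f'$. Integrating against $\mu$ and using $\int f\,d\mu=\int f'\,d\mu=0$ yields $\cos\tfrac{l}{2}=\cos\tfrac{l'}{2}$, hence $l=l'$ (both in $(0,\pi)$, or both zero), and then $f=f'$, i.e. $\tau=\tau'$. (The degenerate case $\tau=0$ forces the common value $\mu$, handled the same way.)

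For surjectivity, given $\mu_1\in B(\mu;\varepsilon)$, so $l:=\ell(\mu,\mu_1)<\varepsilon<\pi$, Lemma \ref{Fgeodesic_4} produces a tangent vector $\tau=l\tilde\tau\in T_\mu\mathcal{P}^\infty(M)$ with $\tilde\tau=\tfrac{1}{\tan(l/2)}(\varphi(\mu,\mu_1)-\mu)$ and $\exp_\mu\tau=\mu_1$; it remains to verify $\tau\in\mathscr{B}(\mu;\varepsilon)$. We have $|\tau|_\mu=l<\varepsilon$ (since $\tilde\tau$ is a unit vector). For the second condition, $d\tau/d\mu=\tfrac{l}{\tan(l/2)}\bigl(\tfrac{q_0}{p}-1\bigr)$ where $q_0/p=d\varphi(\mu,\mu_1)/d\mu\ge 0$; a short rearrangement, using $d\varphi(\mu,\mu_1)/d\mu=\tfrac{1}{\cos(l/2)}\sqrt{d\mu_1/d\mu}$, turns the required inequality $\inf_x d\tau/d\mu>-l\cot(l/2)$ into $\inf_x\sqrt{d\mu_1/d\mu}(x)>0$, which holds because $\mu_1\in\mathcal{P}^\infty(M)$ has positive (hence, on compact $M$, bounded-below) density. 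Hence $\tau\in\mathscr{B}(\mu;\varepsilon)$ and $\exp_\mu$ is onto $B(\mu;\varepsilon)$.

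I expect the main obstacle to be bookkeeping rather than conceptual: getting the open-condition equivalences exactly right — in particular verifying that $\mathscr{B}(\mu;\varepsilon)$ is precisely the preimage set on which the formula for $\exp_\mu$ lands in $\mathcal{P}^\infty(M)$ with $\ell$-distance $<\varepsilon$, and correctly handling the boundary/degenerate cases $|\tau|_\mu\to 0$ and the sign of $\sin\tfrac{l}{2}$. The positivity-by-connectedness argument (mirroring the $M_1,M_2$ dichotomy in the proof of Theorem \ref{uniquenessthm}) is the one genuinely non-formal ingredient, and it is already available from the earlier part of the paper.
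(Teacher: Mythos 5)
Your proposal is correct and follows essentially the same route as the paper: image containment via $\int_M\sqrt{d(\exp_\mu\tau)/d\mu}\,d\mu=\cos\tfrac{|\tau|_\mu}{2}$ (so $|\tau|_\mu=\ell(\mu,\exp_\mu\tau)$), injectivity by comparing square roots of the densities, and surjectivity via Lemma \ref{Fgeodesic_4}. In fact you supply two details the paper leaves implicit — that the defining inequality of $\mathscr{B}(\mu;\varepsilon)$ is exactly pointwise positivity of $\cos\tfrac{l}{2}+\tfrac{1}{l}\sin\tfrac{l}{2}\,\tfrac{d\tau}{d\mu}$, and that the surjectivity witness $\tau=l\tilde\tau$ actually satisfies $\inf_x\tfrac{d\tau}{d\mu}>-l\cot\tfrac{l}{2}$ because $\inf_x\sqrt{d\mu_1/d\mu}>0$ on compact $M$.
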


\begin{proof}
First we will show that $\exp_\mu \tau$ which we denoted by $\mu_1$ belongs to $B(\mu; \varepsilon)$ for any $\tau \in\mathscr{B}(\mu; \varepsilon)$.
Since 
\begin{equation*}
\sqrt{\frac{d\mu_1}{d\mu}} =\left(
\cos\frac{|\tau|_\mu}{2}+\frac{1}{|\tau|_\mu}\sin\frac{|\tau|_\mu}{2}\cdot\frac{d\tau}{d\mu}\right),
\end{equation*}
we have
\begin{equation*}
\begin{split}
\int_M\sqrt{\frac{d\mu_1}{d\mu}}\,d\mu
=&\int_M\left(
\cos\frac{|\tau|_\mu}{2}+\frac{1}{|\tau|_\mu}\sin\frac{|\tau|_\mu}{2}\cdot\frac{d\tau}{d\mu}\right)d\mu\\
=&\cos\frac{|\tau|_\mu}{2}\int_Md\mu
+\frac{1}{|\tau|_\mu}\sin\frac{|\tau|_\mu}{2}\int_Md\tau
=\cos\frac{|\tau|_\mu}{2}.
\end{split}
\end{equation*}
Then, $\displaystyle\cos\frac{|\tau|_\mu}{2} =\cos\frac{\ell(\mu_1,\mu)}{2}$ from \eqref{distancefunction} and hence $|\tau|_\mu = \ell(\mu_1,\mu)$ and thus $\mu_1 \in B(\mu; \varepsilon)$.

Next we will show that the map $\exp_\mu$ is injective over $\mathscr{B}(\mu; \varepsilon)\backslash\{0\}$.
Let $\tau, \tau' \in \mathscr{B}(\mu; \varepsilon)\backslash\{0\}$.
Assume that $\exp_\mu \tau =\exp_\mu \tau'$ which we denote by $\mu_1$.
Then from the above argument, we have $\ell(\mu_1, \mu) = |\tau|_\mu = |\tau'|_\mu$.
Moreover, from
\begin{align*}
\mu_1
=&\left(
\cos\frac{|\tau|_\mu}{2}+\frac{1}{|\tau|_\mu}\sin\frac{|\tau|_\mu}{2}\cdot\frac{d\tau}{d\mu}\right)^2\mu
=\left(
\cos\frac{|\tau'|_\mu}{2}+\frac{1}{|\tau'|_\mu}\sin\frac{|\tau'|_\mu}{2}\cdot\frac{d\tau'}{d\mu}\right)^2\mu,
\end{align*}
it follows similarly as in the proof of Theorem \ref{uniquenessthm} that $d\tau/d\mu= d\tau'/d\mu$ on $M$ and hence $\tau = \tau'$, which means the injectivity of the map $\exp_\mu$.

The surjectivity is obtained by taking $\mu_1$ in $B(\mu,\varepsilon)$ and also $\tau = l\tilde{\tau}\in T_\mu\mathcal{P}^{\infty}(M)$, where $\tilde{\tau}=\frac{1}{\tan(l/2)}\left(\varphi(\mu, \mu_1)-\mu\right)$ is a unit tangent vector at $\mu$ and $l=\ell(\mu,\mu_1)$.
Then, from Lemma \ref{Fgeodesic_4} $\mu_1$ is described as $\mu_1 =\exp_\mu \tau$, which implies the surjectivity of $\exp_{\mu}$.
\end{proof}

\begin{rem}\label{smoothness}
From the above proposition, especially from its actual form the map $\exp_\mu$ is smooth over $\mathscr{B}(\mu; \varepsilon)\backslash\{0\}$ together with smooth inverse map $\exp^{-1}_\mu$. For the smoothness refer to \cite[II]{Lang}.
\end{rem}

\subsection{A totally normal neighborhood}

\begin{lem}\label{lem_L2compere}
Let $\mu = p(x)\,\lambda$ and $\mu_1 = p_1(x)\,\lambda$ be probability measures in $\mathcal{P}^{\infty}(M)$.
Then,
\begin{equation}\label{L2compere}
\ell(\mu,\mu_1) < \varepsilon\ \Longleftrightarrow \left\|\sqrt{p_1} - \sqrt{p}\right\|_{L_2} < \sqrt{2}\sqrt{1 -\cos\frac{\varepsilon}{2}},
\end{equation}
and hence, $B(\mu; \varepsilon)$ is written as
\begin{equation}\label{nbd}
B(\mu;\varepsilon)=\left\{\mu_1 = p_1(x)\,\lambda\,
\left|\,\left\|\sqrt{p_1} - \sqrt{p}\right\|_{L_2}<\sqrt{2}\sqrt{1-\cos\frac{\varepsilon}{2}}\,\right.\right\}.
\end{equation}
\end{lem}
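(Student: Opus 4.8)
The plan is to establish the equivalence \eqref{L2compere} by a direct computation relating the $L_2$-norm $\|\sqrt{p_1}-\sqrt{p}\|_{L_2}$ to the Hellinger integral $C_H(\mu,\mu_1)=\int_M\sqrt{p\,p_1}\,d\lambda$, and then invoke Definition \ref{ellfunction} together with the monotonicity of $\arccos$. First I would expand the square:
\begin{equation*}
\left\|\sqrt{p_1}-\sqrt{p}\right\|_{L_2}^2=\int_M\left(\sqrt{p_1}-\sqrt{p}\right)^2 d\lambda=\int_M p_1\,d\lambda+\int_M p\,d\lambda-2\int_M\sqrt{p\,p_1}\,d\lambda.
\end{equation*}
Since $\mu,\mu_1\in\mathcal{P}^{\infty}(M)$, both $\int_M p\,d\lambda$ and $\int_M p_1\,d\lambda$ equal $1$, so the right-hand side equals $2-2C_H(\mu,\mu_1)=2\bigl(1-C_H(\mu,\mu_1)\bigr)$. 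Hence $\|\sqrt{p_1}-\sqrt{p}\|_{L_2}=\sqrt{2}\sqrt{1-C_H(\mu,\mu_1)}$, an exact identity.

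Next I would use Definition \ref{ellfunction}, which gives $C_H(\mu,\mu_1)=\cos\frac{\ell(\mu,\mu_1)}{2}$, so that
\begin{equation*}
\left\|\sqrt{p_1}-\sqrt{p}\right\|_{L_2}=\sqrt{2}\sqrt{1-\cos\frac{\ell(\mu,\mu_1)}{2}}.
\end{equation*}
Now observe that on the relevant range $\ell(\mu,\mu_1)\in[0,\pi)$, so $\ell/2\in[0,\pi/2)$, the function $s\mapsto\sqrt{2}\sqrt{1-\cos(s/2)}$ is strictly increasing in $s\in[0,\pi)$ (since $\cos$ is strictly decreasing there and the square root is increasing). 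Therefore $\ell(\mu,\mu_1)<\varepsilon$ holds if and only if $\sqrt{2}\sqrt{1-\cos\frac{\ell(\mu,\mu_1)}{2}}<\sqrt{2}\sqrt{1-\cos\frac{\varepsilon}{2}}$, which by the displayed identity is exactly $\|\sqrt{p_1}-\sqrt{p}\|_{L_2}<\sqrt{2}\sqrt{1-\cos\frac{\varepsilon}{2}}$. This proves \eqref{L2compere}, and the description \eqref{nbd} of $B(\mu;\varepsilon)$ follows immediately from its definition in \eqref{neighborhood}.

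I do not anticipate a genuine obstacle here: the statement is essentially the classical identity $d_H^2=2(1-C_H)$ combined with the definition of $\ell$ and a monotonicity remark. The only point requiring a word of care is the strict monotonicity of $s\mapsto\sqrt{1-\cos(s/2)}$ on $[0,\pi)$, needed to pass from the equality of norms to the equivalence of the two strict inequalities; this is immediate since $\varepsilon<\pi$ keeps $\varepsilon/2$ strictly below $\pi/2$ where $1-\cos(\cdot/2)$ stays in $[0,1)$ and is strictly increasing.
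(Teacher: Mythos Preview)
Your proof is correct and follows essentially the same route as the paper: both expand $\|\sqrt{p_1}-\sqrt{p}\|_{L_2}^2$ to obtain the identity $\|\sqrt{p_1}-\sqrt{p}\|_{L_2}^2=2-2\cos\frac{\ell(\mu,\mu_1)}{2}$ and then use the strict monotonicity of cosine on $[0,\pi/2)$ to pass to the equivalence of inequalities. The only cosmetic difference is that the paper carries out the equivalence via the chain $\cos\frac{\varepsilon}{2}<1-\tfrac{1}{2}\|\sqrt{p_1}-\sqrt{p}\|_{L_2}^2\le 1$, whereas you apply monotonicity of $s\mapsto\sqrt{2}\sqrt{1-\cos(s/2)}$ directly.
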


\begin{rem}
From \eqref{nbd} $B(\mu;\varepsilon)$ can be regarded as a neighborhood of $\mathcal{P}^{\infty}(M)$ with respect to the $\|\sqrt{\cdot}-\sqrt{\cdot}\|_{L^2}$-norm around $\mu = p \lambda$.
Therefore we consider each $B(\mu;\varepsilon)$ as a neighborhood of $\mathcal{P}^{\infty}(M)$ around $\mu$.
\end{rem}

\begin{proof}
Denote $\ell(\mu,\mu_1)$ by $\ell$ by abbreviation.
Then, the left hand side of \eqref{L2compere} is equivalent to $0 \le \ell/2 < \varepsilon/2$ and hence to $\cos(\varepsilon/2) <\cos(\ell/2) \le 1$.
On the other hand, we have the following identity:
\begin{equation}
\left\|\sqrt{p_1} - \sqrt{p}\right\|_{L_2}^2=2-2\cos\frac{\ell}{2}
\end{equation}
which is derived from
\begin{equation}
\left\|\sqrt{p_1} - \sqrt{p}\right\|_{L_2}^2
 = \int_M\left(\sqrt{p_1} - \sqrt{p}\right)^2\,d\lambda
 =2-2\int_M\sqrt{p_1 p}\,d\lambda,
\end{equation}
where $\displaystyle \int_M\sqrt{p_1 p}\,d\lambda$ is represented by
\begin{equation*}
\int_M\sqrt{\frac{p_1}{p}}\,p\,d\lambda=\int_M\sqrt{\frac{d\mu_1}{d\mu}}\,d\mu
=\cos\frac{\ell(\mu, \mu_1)}{2}.
\end{equation*}

Then, $\cos(\varepsilon/2)<\cos(l/2)\le 1$ is equivalent to
\begin{align*}
\cos\frac{\varepsilon}{2} < 1-\frac{1}{2}\left\|\sqrt{p_1} - \sqrt{p}\right\|_{L_2}^2
 \le 1
\Longleftrightarrow\ &
1-\cos\frac{\varepsilon}{2}>\frac{1}{2}\left\|\sqrt{p_1} - \sqrt{p}\right\|_{L_2}^2
\ge 0\\
\Longleftrightarrow\ &
2\left(1-\cos\frac{\varepsilon}{2}\right)>\left\|\sqrt{p_1} - \sqrt{p}\right\|_{L_2}^2
\ge 0
\end{align*}
from which it holds \eqref{L2compere}.
Notice that $0 \le \varepsilon/2 < \pi/2$.
\end{proof}

Let $\mu_1 = p_1(x)\,\lambda, \mu_2 = p_2(x)\,\lambda \in B(\mu; \varepsilon)$ be arbitrary probability measures.
From Lemma \ref{lem_L2compere} we have
\begin{equation*}
\left\|\sqrt{p_i} - \sqrt{p}\right\|_{L_2} < \sqrt{2}\sqrt{1-\cos\frac{\varepsilon}{2}},\qquad i=1,2.
\end{equation*}
From the triangle inequality with respect to the $L_2$-norm, we have then
\begin{equation}\label{ineq_tri1}
\left\| \sqrt{p_2} - \sqrt{p_1}
\right\|_{L_2} \le\left\|\sqrt{p_2} - \sqrt{p}\right\|_{L_2} +\left\|\sqrt{p_1} - \sqrt{p}\right\|_{L_2}<2\sqrt{2}\sqrt{1-\cos\frac{\varepsilon}{2}}.
\end{equation}

\begin{lem}
Let $t$ be a real number satisfying $0 < t < \pi/2$.
Then, we have
\begin{equation}\label{ineq_tri2}
\sqrt{2}\sqrt{1-\cos t}\le \sqrt{1-\cos 2t}.
\end{equation}
\end{lem}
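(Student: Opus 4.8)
The plan is to square both sides of \eqref{ineq_tri2}---which is legitimate since both sides are nonnegative for $0<t<\pi/2$---and thereby reduce the claim to the elementary inequality $2(1-\cos t)\le 1-\cos 2t$. First I would apply the double-angle identity $\cos 2t=2\cos^{2}t-1$, so that the right-hand side becomes $2-2\cos^{2}t=2(1-\cos t)(1+\cos t)$. Since $0<t<\pi/2$ forces $0<\cos t<1$, the factor $1-\cos t$ is strictly positive, and dividing through by $2(1-\cos t)$ converts the desired inequality into $1\le 1+\cos t$, i.e. $\cos t\ge 0$. This last statement is immediate from $0<t<\pi/2$, which finishes the argument.

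A slightly more transparent route, and perhaps more suggestive for the way the estimate is used together with \eqref{ineq_tri1}, is to pass to half-angles: on $(0,\pi)$ one has $\sqrt{2}\sqrt{1-\cos t}=2\sin(t/2)$ and $\sqrt{1-\cos 2t}=\sqrt{2}\,\sin t=2\sqrt{2}\,\sin(t/2)\cos(t/2)$, so \eqref{ineq_tri2} is equivalent to $1\le\sqrt{2}\cos(t/2)$, that is $\cos(t/2)\ge\cos(\pi/4)$. Since $0<t/2<\pi/4$ and cosine is decreasing on $[0,\pi/2]$, this holds, with equality only in the limit $t\to\pi/2$.

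I do not expect any genuine obstacle here. The only points deserving a word of care are that the squaring step is reversible because both sides of \eqref{ineq_tri2} are nonnegative on $(0,\pi/2)$, and that the reduction by division is valid precisely because $1-\cos t>0$ on the \emph{open} interval. In fact the computation shows the inequality is strict throughout $(0,\pi/2)$, degenerating to equality only in the boundary limit $t\to\pi/2$.
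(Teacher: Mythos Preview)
Your proof is correct and follows essentially the same route as the paper: both arguments reduce the inequality, via the double-angle identity $1-\cos 2t=2(1-\cos^2 t)=2(1-\cos t)(1+\cos t)$, to the observation that $1+\cos t>1$ on $(0,\pi/2)$. The only cosmetic difference is that you square first and then factor, while the paper factors directly under the square root; your half-angle variant is an equivalent repackaging of the same computation.
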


\begin{proof}
From the obvious equality $1-\cos 2t = 2(1-\cos^2 t)$, we have
\begin{equation*}
\sqrt{1-\cos 2t} = \sqrt{2}\sqrt{1-\cos^2 t}.
\end{equation*}
Since $1 -\cos t > 0$ and $1 +\cos t > 1$ for $0 < t < \pi/2$, we have
\begin{equation*}
\sqrt{2}\sqrt{1-\cos t} < \sqrt{2} \sqrt{1-\cos t}\sqrt{1+\cos t}=\sqrt{2}\sqrt{1-\cos^2 t}
\end{equation*}
which is equal to $\sqrt{1 -\cos 2t}$.
\end{proof}

Now, let $B(\mu; \varepsilon)$ be a neighborhood around $\mu$, defined at \eqref{neighborhood} with $\varepsilon < \pi/4$ and take arbitrary probability measures $\mu_i= p_i\,\lambda\in B(\mu; \varepsilon), i=1,2$.
Then, from \eqref{ineq_tri1} and \eqref{ineq_tri2}, we have
\begin{align*}
\left\|\sqrt{p_2} - \sqrt{p_1}\right\|_{L_2}
< & 2\sqrt{2}\sqrt{1-\cos\frac{\varepsilon}{2}}
\le 2\sqrt{1-\cos\varepsilon}\\
\le & \sqrt{2}\sqrt{1-\cos 2\varepsilon}= \sqrt{2}\sqrt{1-\cos\frac{4\varepsilon}{2}}.
\end{align*}

\begin{lem}\label{lem-ineq_1}
Let $B(\mu; \varepsilon)$ be a neighborhood with $\varepsilon < \pi/4$.
Then, for any $\mu_1, \mu_2 \in B(\mu; \varepsilon)$,
\begin{equation}\label{ineq_l}
\ell(\mu_1, \mu_2) < 4\varepsilon.
\end{equation}
\end{lem}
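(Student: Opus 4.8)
The plan is to chain together the $L_2$-estimates already assembled in the paragraphs preceding the statement and then invoke Lemma \ref{lem_L2compere} with the roles of $\mu,\mu_1$ played by $\mu_1,\mu_2$ and with $\varepsilon$ replaced by $4\varepsilon$. First I would record that, writing $\mu_1 = p_1\lambda$ and $\mu_2 = p_2\lambda$, Lemma \ref{lem_L2compere} applied to this pair gives the equivalence
\begin{equation*}
\ell(\mu_1,\mu_2) < 4\varepsilon
\quad\Longleftrightarrow\quad
\left\|\sqrt{p_2}-\sqrt{p_1}\right\|_{L_2} < \sqrt{2}\sqrt{1-\cos\frac{4\varepsilon}{2}},
\end{equation*}
which is legitimate precisely because $4\varepsilon < \pi$, i.e. because $\varepsilon < \pi/4$; this is the only place the hypothesis on $\varepsilon$ enters in an essential way.

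Next I would observe that the right-hand inequality is exactly what the computation displayed just before the lemma delivers. Indeed, from $\mu_i\in B(\mu;\varepsilon)$ and Lemma \ref{lem_L2compere} one has $\left\|\sqrt{p_i}-\sqrt{p}\right\|_{L_2} < \sqrt{2}\sqrt{1-\cos(\varepsilon/2)}$ for $i=1,2$; the triangle inequality for the $L_2$-norm then yields \eqref{ineq_tri1}, and two successive applications of the elementary inequality \eqref{ineq_tri2} (valid since $\varepsilon/2 < \pi/2$, again a consequence of $\varepsilon < \pi/4$) bound $\left\|\sqrt{p_2}-\sqrt{p_1}\right\|_{L_2}$ by $2\sqrt{1-\cos\varepsilon}$ and then by $\sqrt{2}\sqrt{1-\cos 2\varepsilon} = \sqrt{2}\sqrt{1-\cos(4\varepsilon/2)}$. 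Feeding this estimate into the equivalence above gives $\ell(\mu_1,\mu_2) < 4\varepsilon$, which is the assertion.

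Since all of the analytic content — Lemma \ref{lem_L2compere}, the $L_2$ triangle inequality, and the inequality \eqref{ineq_tri2} — is already in hand, I do not expect any genuine obstacle here; the proof is essentially a one-line assembly of the preceding displays. The only points deserving a word of care are purely bookkeeping: checking that the chain of inequalities is strict throughout (the strictness is already present in \eqref{ineq_tri1}, inherited from $\mu_i\in B(\mu;\varepsilon)$), and confirming that the two auxiliary hypotheses invoked, namely $\varepsilon/2 < \pi/2$ for \eqref{ineq_tri2} and $4\varepsilon < \pi$ for the instance of Lemma \ref{lem_L2compere} with parameter $4\varepsilon$, both follow from $\varepsilon < \pi/4$.
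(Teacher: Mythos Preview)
Your proposal is correct and follows essentially the same route as the paper: translate membership in $B(\mu;\varepsilon)$ into the $L_2$ bound via Lemma~\ref{lem_L2compere}, use the triangle inequality \eqref{ineq_tri1} and two applications of \eqref{ineq_tri2} to reach $\left\|\sqrt{p_2}-\sqrt{p_1}\right\|_{L_2} < \sqrt{2}\sqrt{1-\cos(4\varepsilon/2)}$, and then read off $\ell(\mu_1,\mu_2)<4\varepsilon$ from Lemma~\ref{lem_L2compere} applied with parameter $4\varepsilon$. The only minor slip is that the second use of \eqref{ineq_tri2} takes $t=\varepsilon$, so the hypothesis needed there is $\varepsilon<\pi/2$ (not $\varepsilon/2<\pi/2$), but this too follows from $\varepsilon<\pi/4$.
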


\begin{proof}
For $\mu_1, \mu_2 \in B(\mu; \varepsilon)$ one has $\ell(\mu_i, \mu) < \varepsilon$, $i = 1, 2$, equivalently
\begin{equation*}
\left\| \sqrt{p_i} - \sqrt{p}\right\|_{L_2} < \sqrt{2}\sqrt{1-\cos\frac{\varepsilon}{2}},\qquad i=1,2
\end{equation*}
and from the above argument
\begin{equation*}
\left\|\sqrt{p_2} - \sqrt{p_1}\right\|_{L_2} < \sqrt{2}\sqrt{1-\cos\frac{4\varepsilon}{2}}
\end{equation*}
which means \eqref{ineq_l}.
\end{proof}

\begin{prop}
Let $\mu \in\mathcal{P}^{\infty}(M)$ be an arbitrary probability measure and $\varepsilon$ be a real number satisfying $0 < \varepsilon < \pi$.
Let $W = B(\mu; \varepsilon/4)$ be a neighborhood defined at \eqref{neighborhood}.
For any $\mu_1 \in W$, let $B(\mu_1; \varepsilon)$ be a neighborhood around $\mu_1$.
Then,
\begin{enumerate}
\item $W \subset B(\mu_1; \varepsilon)$ and
\item $\exp_{\mu_1}$ is a diffeomorphism between $\mathscr{B}(\mu_1; \varepsilon)$ and $B(\mu_1; \varepsilon)$.
\end{enumerate}
\end{prop}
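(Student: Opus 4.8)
The plan is to deduce both statements from Lemma~\ref{lem-ineq_1} together with Proposition~\ref{BtoB}. For part (i), take $\mu_1\in W=B(\mu;\varepsilon/4)$. I want to show that an arbitrary $\mu_2\in W$ lies in $B(\mu_1;\varepsilon)$, i.e. that $\ell(\mu_1,\mu_2)<\varepsilon$. This is precisely the content of Lemma~\ref{lem-ineq_1} applied with the radius $\varepsilon/4$ in place of $\varepsilon$: since $\varepsilon/4<\pi/4$, the lemma gives $\ell(\mu_1,\mu_2)<4\cdot(\varepsilon/4)=\varepsilon$ for all $\mu_1,\mu_2\in B(\mu;\varepsilon/4)$. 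Hence $\mu_2\in B(\mu_1;\varepsilon)$, and since $\mu_2\in W$ was arbitrary, $W\subset B(\mu_1;\varepsilon)$.

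For part (ii), the point $\mu_1$ is itself a probability measure in $\mathcal{P}^{\infty}(M)$ and $0<\varepsilon<\pi$, so Proposition~\ref{BtoB} applies verbatim with $\mu$ replaced by $\mu_1$: it asserts that $\exp_{\mu_1}:\mathscr{B}(\mu_1;\varepsilon)\to B(\mu_1;\varepsilon)$ is a bijection. By Remark~\ref{smoothness}, this bijection is smooth with smooth inverse on $\mathscr{B}(\mu_1;\varepsilon)\setminus\{0\}$ — the explicit formula for $\exp_{\mu_1}$ and for its inverse (extracted from Lemma~\ref{Fgeodesic_4} and Theorem~\ref{uniquenessthm}) shows both are smooth maps between the relevant open subsets in the $H^a_1$-topology — so $\exp_{\mu_1}$ is a diffeomorphism. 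One should note that the origin is handled by the usual convention $|\tau|_\mu\cot(|\tau|_\mu/2)=2$, under which the formula for $\exp_{\mu_1}$ is continuous (indeed smooth) through $0$ as well.

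I do not expect a genuine obstacle here: the proposition is essentially the statement that the collection $\{B(\mu;\varepsilon/4)\}$ provides totally normal neighborhoods, and both ingredients — the metric-comparison inequality $\ell(\mu_1,\mu_2)<4\varepsilon$ on $B(\mu;\varepsilon)$ and the bijectivity/smoothness of $\exp$ — have already been established. The only point requiring a word of care is that Lemma~\ref{lem-ineq_1} is stated for neighborhoods of radius less than $\pi/4$, so one must invoke it with radius $\varepsilon/4$ (legitimate since $\varepsilon<\pi$), not with radius $\varepsilon$; and that the smoothness in Remark~\ref{smoothness} is asserted relative to the $H^a_1$-topology introduced at the start of the section, which is the ambient topology on $\mathcal{P}^{\infty}(M)$ here.
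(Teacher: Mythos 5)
Your proof is correct and follows the paper's own argument exactly: part (i) is Lemma \ref{lem-ineq_1} applied to the ball of radius $\varepsilon/4<\pi/4$, and part (ii) is Proposition \ref{BtoB} combined with Remark \ref{smoothness} at the point $\mu_1$. Your added remark about which radius to feed into Lemma \ref{lem-ineq_1} is a sensible clarification of a step the paper leaves implicit.
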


The neighborhood $W$ is called a \textit{totally normal neighborhood} of $\mu$.

\begin{proof}Notice $\ell(\mu,\mu_1) < \varepsilon/4$. 
If $\mu_2 \in W$, then $\ell(\mu, \mu_2) < \varepsilon/4$.
From Lemma \ref{lem-ineq_1} we have $\ell(\mu_1,\mu_2) < 4\cdot(\varepsilon/4) = \varepsilon$ and hence $\mu_2 \in B(\mu_1; \varepsilon)$.
Since $\mu_2 \in W$ is arbitrary, we see $W\subset B(\mu_1; \varepsilon)$.

Assertion (ii) is shown from Proposition \ref{BtoB} together with Remark \ref{smoothness}, since $0 < \varepsilon < \pi$.
\end{proof}

\begin{lem}[Gauss Lemma]\label{gausslemma}
Denote by $f(t,\tau)$ the image of the exponential map $\exp_\mu t \tau$,
$t > 0$ and $\tau \in T_\mu\mathcal{P}(M)$ of unit norm $\vert \tau\vert_{\mu} =$ $G_\mu(\tau,\tau)^{1/2} = 1$.
Then
\begin{equation*}
G_{f(t,\tau)}\left(\frac{\partial f}{\partial t}, \frac{\partial f}{\partial \tau}_{\ast}(\delta\tau )\right)=0,
\end{equation*}
where $\frac{\partial f}{\partial t}$ is the differential of $f$ with respect to $t$ and $\frac{\partial f}{\partial \tau}_{\ast}$ is the differential map from $T_{\tau}S_{\mu}$ to $T_{f(t,\tau)}{\mathcal P}(M)$. 
Here $S_\mu :=\{\sigma\in T_\mu\mathcal{P}^{\infty}(M)\,|\,G_\mu(\sigma, \sigma) = 1\}$ and $\delta\tau$ is a tangent vector at $\tau$ to $S_{\mu}$.
\end{lem}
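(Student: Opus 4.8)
The plan is to bypass the abstract variation-of-geodesics argument and compute directly from the explicit form of the exponential map established in Proposition~\ref{BtoB}. Abbreviate $c_t:=\cos(t/2)$, $s_t:=\sin(t/2)$ and $u:=d\tau/d\mu$. Since $|\tau|_{\mu}=1$ one has $f(t,\tau)=(c_t+u\,s_t)^2\mu$, and for the range of $t$ with $t\tau$ in the domain $\mathscr{B}(\mu;\varepsilon)$ of $\exp_\mu$ the density $p_t:=(c_t+u\,s_t)^2$ is positive and smooth on $M$; hence $f(t,\tau)\in\mathcal{P}^{\infty}(M)$ and every Radon--Nikodym derivative below is a well-defined continuous function. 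First I would differentiate the formula in $t$, obtaining
\begin{equation*}
\frac{\partial f}{\partial t}=\bigl(c_t+u\,s_t\bigr)\bigl(-s_t+u\,c_t\bigr)\mu,\qquad
\frac{d(\partial_t f)}{d f(t,\tau)}=\frac{-s_t+u\,c_t}{c_t+u\,s_t},
\end{equation*}
and noting $\int_M d(\partial_t f)=\tfrac{d}{dt}\int_M df(t,\tau)=0$, so that $\partial_t f\in T_{f(t,\tau)}\mathcal{P}^{\infty}(M)$.

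Next, for $\delta\tau\in T_\tau S_\mu$ I would pick a smooth curve $s\mapsto\tau_s$ in $S_\mu$ with $\tau_0=\tau$, $\dot\tau_0=\delta\tau$, put $v:=d(\delta\tau)/d\mu$, and differentiate $f(t,\tau_s)$ at $s=0$, getting
\begin{equation*}
\frac{\partial f}{\partial\tau}_{\ast}(\delta\tau)=2\,s_t\,(c_t+u\,s_t)\,v\,\mu,\qquad
\frac{d}{d f(t,\tau)}\left[\frac{\partial f}{\partial\tau}_{\ast}(\delta\tau)\right]=\frac{2\,s_t\,v}{c_t+u\,s_t}.
\end{equation*}
Substituting these two densities into the Fisher metric \eqref{FisherIM} and using $df(t,\tau)=(c_t+u\,s_t)^2\,d\mu$, the two denominators cancel against this weight and one is left with
\begin{equation*}
G_{f(t,\tau)}\left(\frac{\partial f}{\partial t},\frac{\partial f}{\partial\tau}_{\ast}(\delta\tau)\right)
=2\,s_t\int_M\bigl(-s_t+u\,c_t\bigr)\,v\,d\mu
=2\,s_t\left(-s_t\int_M v\,d\mu+c_t\int_M u\,v\,d\mu\right).
\end{equation*}
The first integral vanishes because $\int_M v\,d\mu=\int_M d(\delta\tau)=0$, and the second equals $c_t\,G_\mu(\tau,\delta\tau)=0$, since $\delta\tau$, being tangent to the unit sphere $S_\mu$ at $\tau$, is $G_\mu$-orthogonal to the radial direction $\tau$. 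Hence the inner product is $0$ for every admissible $t$, which is the assertion.

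The one point that needs care is the differentiation with respect to the sphere variable: one must know that $s\mapsto f(t,\tau_s)$ is a differentiable curve in $\mathcal{P}^{\infty}(M)$ (in the $H^a_1$-structure) and that the chain rule applies to $\bigl(c_t+(d\tau_s/d\mu)\,s_t\bigr)^2$. This follows from the smoothness of $\exp_\mu$ recorded in Remark~\ref{smoothness}, together with the fact that $\tau_s$ may be taken smooth in the Sobolev topology; the remainder is just the identity $\int_M d(\delta\tau)=0$ and the definition of $T_\tau S_\mu$. If one prefers to avoid the explicit formula, the classical proof goes through verbatim: along each radial geodesic $\partial_t f$ has constant unit norm and $\partial_\tau f$ vanishes at $t=0$, while $\tfrac{\partial}{\partial t}G(\partial_t f,\partial_\tau f)=G(\nabla_{\partial_t}\partial_t f,\partial_\tau f)+\tfrac12\,\partial_\tau G(\partial_t f,\partial_t f)=0$ by torsion-freeness of $\nabla$ and unit speed; but since here $\nabla$ is available only through the pointwise formula \eqref{LCconnFI}, the direct computation above is cleaner and self-contained.
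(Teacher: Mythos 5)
Your computation is correct and is essentially identical to the paper's own proof: both differentiate the explicit formula $f(t,\tau)=(\cos\frac{t}{2}+\frac{d\tau}{d\mu}\sin\frac{t}{2})^2\mu$ in $t$ and in the sphere direction, cancel the density $(\cos\frac{t}{2}+\frac{d\tau}{d\mu}\sin\frac{t}{2})^2$ against the denominators of the Radon--Nikodym derivatives, and conclude from $\int_M d(\delta\tau)=0$ together with $G_\mu(\tau,\delta\tau)=0$. The extra remarks on smoothness and the alternative classical route are fine but not needed beyond what the paper already records.
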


\begin{proof}
While this lemma is a routine in Riemannian geometry,
we verify it directly.
Since $\displaystyle f(t, \tau)=\left(\cos\frac{t}{2}+\sin\frac{t}{2}\frac{d\tau}{d\mu}\right)^2\,\mu$,
we have
\begin{align*}
\frac{\partial f}{\partial t}=& \left(\cos\frac{t}{2}+\sin\frac{t}{2}\frac{d\tau}{d\mu}\right)\left(-\sin\frac{t}{2}+\cos\frac{t}{2}\frac{d\tau}{d\mu}\right)\,\mu,\\
\frac{\partial f}{\partial \tau}_{\ast}(\delta \tau)=&2\left(\cos\frac{t}{2}+\sin\frac{t}{2}\frac{d\tau}{d\mu}\right)\sin\frac{t}{2}\cdot\frac{d(\delta\tau)}{d\mu}\,\mu.
\end{align*}

Now we will see $G_{f(t,\tau)}(\partial f/\partial t, \partial f/\partial \tau_{\ast}(\delta \tau))=0$.
Since
\begin{align*}
\frac{d(\partial f/\partial t)}{d\, f(t,\tau)}
=& \frac{\left(\cos\frac{t}{2} + \frac{d\tau}{d\mu}\,\sin\frac{t}{2}\right)\left(-\sin\frac{t}{2} + \frac{d\tau}{d\mu}\,\cos\frac{t}{2}\right)}{\left(\cos\frac{t}{2} + \frac{d\tau}{d\mu}\,\sin\frac{t}{2}\right)^2}
= \frac{-\sin\frac{t}{2} + \frac{d\tau}{d\mu}\,\cos\frac{t}{2}}{\cos\frac{t}{2} + \frac{d\tau}{d\mu}\,\sin\frac{t}{2}}
\end{align*}
and similarly
\begin{equation*}
\frac{d(\partial f/\partial \tau_{\ast}(\delta\tau))}{d\, f(t,\tau)}
= \frac{2\frac{d(\delta \tau)}{d\mu} \sin\frac{t}{2}}{\cos\frac{t}{2} + \frac{d\tau}{d\mu}\,\sin\frac{t}{2}}
\end{equation*}
and thus
\begin{equation*}
G_{f(t,\tau)}\left(\frac{\partial f}{\partial t}, \frac{\partial f}{\partial \tau}_{\ast}(\delta\tau)\right)
= \int_M \frac{2\left(-\sin\frac{t}{2} + \frac{d\tau}{d\mu}\,\cos\frac{t}{2}\right)\,\frac{d(\delta \tau)}{d\mu} \sin\frac{t}{2}}
{\left(\cos\frac{t}{2} + \frac{d\tau}{d\mu}\,\sin\frac{t}{2}\right)^2}
\cdot \left(\cos\frac{t}{2} + \frac{d\tau}{d\mu}\,\sin\frac{t}{2}\right)^2 d\mu
\end{equation*}
which is reduced to zero, since
\begin{multline*}
\int_M 2\left(-\sin\frac{t}{2}+\cos\frac{t}{2}\cdot\frac{d\tau}{d\mu}\right)\sin\frac{t}{2}\cdot\frac{d(\delta\tau)}{d\mu}\,d\mu\\
=- 2\sin^2\frac{t}{2}\int_M\frac{d(\delta\tau)}{d\mu}d\mu
+2\sin\frac{t}{2}\,\cos\frac{t}{2}\,G_\mu(\tau, \delta\tau)=0,
\end{multline*}
where $G_{\mu}(\tau, \delta\tau)=0$ is derived from the derivation of $G_{\mu}(\tau, \tau)=1$ along the direction $\delta\tau$.
Thus, the lemma is proved.
\end{proof}

\begin{prop}\label{prop_length}
Let $\mu \in \mathcal{P}^{\infty}(M)$ and $\varepsilon \in (0,\pi)$.
Let $\mathscr{B}(\mu; \varepsilon)$ be an $\varepsilon$-open neighborhood in $T_\mu\mathcal{P}^{\infty}(M)$ such that $B(\mu; \varepsilon) =\exp_\mu (\mathscr{B}(\mu; \varepsilon))$.
Let $\gamma : [0, 1] \rightarrow B(\mu; \varepsilon)$ be a geodesic segment satisfying $\gamma(0) = \mu$.

If $c : [0,1] \rightarrow \mathcal{P}^{\infty}(M)$ be any piecewise $C^1$-curve joining $\gamma(0)$ and $\gamma(1)$, then the length of $\gamma$ and $c$ satisfies
\begin{equation*}
\mathscr{L}(\gamma) \le \mathscr{L}(c)
\end{equation*}
and if equality holds, then $\gamma([0, 1]) = c([0, 1])$,
that is, the image by $\gamma$ of $[0,1]$ coincides with the image by $c$ of $[0,1]$.
\end{prop}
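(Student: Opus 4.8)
The plan is to carry over, essentially verbatim, the classical polar-coordinate proof of the minimizing property of short geodesics (cf.\ \cite[Ch.~3]{doC}), all of whose ingredients are now available: the diffeomorphism $\exp_\mu\colon\mathscr{B}(\mu;\varepsilon)\to B(\mu;\varepsilon)$ (Proposition \ref{BtoB}, Remark \ref{smoothness}), the identity $\ell(\mu,\exp_\mu v)=|v|_\mu$ established in the proof of Proposition \ref{BtoB}, and the Gauss Lemma \ref{gausslemma}. The first step is to record that $\mathscr{B}(\mu;\varepsilon)$ is star-shaped about the origin: for $\tau\in\mathscr{B}(\mu;\varepsilon)$, $\tau\ne0$, and $0<t\le1$ one has $|t\tau|_\mu=t|\tau|_\mu<\varepsilon$, and since $\cot$ is decreasing on $(0,\pi)$ and $t|\tau|_\mu/2\le|\tau|_\mu/2<\pi/2$,
\begin{equation*}
\inf_{x\in M}\frac{d(t\tau)}{d\mu}(x)=t\inf_{x\in M}\frac{d\tau}{d\mu}(x)>-t|\tau|_\mu\cot\frac{|\tau|_\mu}{2}\ \ge\ -|t\tau|_\mu\cot\frac{|t\tau|_\mu}{2},
\end{equation*}
so $t\tau\in\mathscr{B}(\mu;\varepsilon)$ (the origin lies in $\mathscr{B}(\mu;\varepsilon)$ by the stated convention). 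Composing $\exp_\mu$ with the diffeomorphism $(0,\infty)\times S_\mu\ni(r,\sigma)\mapsto r\sigma$ then gives geodesic polar coordinates around $\mu$: every $q\in B(\mu;\varepsilon)\setminus\{\mu\}$ is uniquely $q=f(r,\sigma):=\exp_\mu(r\sigma)$ with $\sigma\in S_\mu$ and $r=\ell(\mu,q)\in(0,\varepsilon)$, and $(r,\sigma)\mapsto f(r,\sigma)$ is a diffeomorphism from a radially defined open set onto $B(\mu;\varepsilon)\setminus\{\mu\}$. Since each radial curve $r\mapsto f(r,\sigma)$ is a unit-speed geodesic, $|\partial f/\partial r|_{f(r,\sigma)}=1$, and the Gauss Lemma gives $G_{f(r,\sigma)}\bigl(\partial f/\partial r,\ \partial f/\partial\sigma_\ast(\delta\sigma)\bigr)=0$ for every $\delta\sigma\in T_\sigma S_\mu$.

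Next comes the length comparison. Put $\mu_1=\gamma(1)$ and $r_1=\ell(\mu,\mu_1)$; because $\gamma$ is a radial geodesic from $\mu$ we have $\gamma(t)=\exp_\mu(t\dot\gamma(0))$ with $|\dot\gamma(0)|_\mu=r_1$, so $\mathscr{L}(\gamma)=r_1$. Fix $\rho\in(r_1,\varepsilon)$ and let $c\colon[0,1]\to\mathcal{P}^{\infty}(M)$ be a piecewise $C^1$ curve from $\mu$ to $\mu_1$. Replacing $c$ by its restriction to $[a_0,1]$, where $a_0=\sup\{s:c(s)=\mu\}$, does not increase length and makes $c(s)\ne\mu$ for $s\in(0,1]$. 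If $c$ leaves the set $\{q:\ell(\mu,q)\le\rho\}$, then at the first time $s_1$ with $\ell(\mu,c(s_1))=\rho$ the sub-curve $c|_{[0,s_1]}$ lies in $B(\mu;\varepsilon)$, and the estimate below applied to it gives $\mathscr{L}(c)\ge\mathscr{L}(c|_{[0,s_1]})\ge\rho>r_1$. Otherwise $c([0,1])\subset B(\mu;\varepsilon)$, and for $s\in(0,1]$ we may write $c(s)=f(r(s),\sigma(s))$ with $r(s)=\ell(\mu,c(s))$ and $\sigma(s)\in S_\mu$ both piecewise $C^1$ (as $\exp_\mu^{-1}$ is smooth and $c(s)\ne\mu$). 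Then $c'(s)=r'(s)\,\partial f/\partial r+\partial f/\partial\sigma_\ast(\sigma'(s))$, the two summands are $G_{c(s)}$-orthogonal by the Gauss Lemma, and $|\partial f/\partial r|_{c(s)}=1$, so
\begin{equation*}
|c'(s)|_{c(s)}^2=|r'(s)|^2+\left|\frac{\partial f}{\partial\sigma}_\ast(\sigma'(s))\right|_{c(s)}^2\ \ge\ r'(s)^2 .
\end{equation*}
Integrating and letting the lower limit tend to $0$ (using $r(s)\to\ell(\mu,\mu)=0$ as $s\to0^+$),
\begin{equation*}
\mathscr{L}(c)=\int_0^1|c'(s)|_{c(s)}\,ds\ \ge\ \int_0^1|r'(s)|\,ds\ \ge\ \int_0^1 r'(s)\,ds=r_1=\mathscr{L}(\gamma).
\end{equation*}

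For the equality case, suppose $\mathscr{L}(c)=\mathscr{L}(\gamma)=r_1$. Then the truncation incurred no loss of length, so $c|_{[0,a_0]}$ has zero length and hence is the constant curve $\mu$, and $c$ cannot leave $\{q:\ell(\mu,q)\le\rho\}$; moreover every inequality above is an equality. From $|r'|=r'$ a.e.\ and $\int_0^1 r'=r_1$ the function $r$ is a nondecreasing continuous surjection onto $[0,r_1]$; from $\partial f/\partial\sigma_\ast(\sigma'(s))=0$ a.e.\ together with the injectivity of $\partial f/\partial\sigma_\ast$ (valid since $d(\exp_\mu)_{r\sigma}$ is an isomorphism and $r\ne0$) we get $\sigma'(s)=0$ a.e., so $\sigma\equiv\sigma_0$ for a fixed $\sigma_0\in S_\mu$. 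Hence $c(s)=\exp_\mu(r(s)\sigma_0)$ for all $s$, with $\exp_\mu(r_1\sigma_0)=\mu_1=\exp_\mu(\dot\gamma(0))$ forcing $r_1\sigma_0=\dot\gamma(0)$ by injectivity of $\exp_\mu$ on $\mathscr{B}(\mu;\varepsilon)$. Thus $c$ is a monotone reparametrization of $\gamma$ and $c([0,1])=\gamma([0,1])$.

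I expect no essential obstacle beyond bookkeeping: the two reductions above (a curve returning to $\mu$, or straying out of $B(\mu;\varepsilon)$), the piecewise-$C^1$ regularity of the polar functions $r(s),\sigma(s)$, and the limit $s\to0^+$ in the integral are all routine. The only place the specific geometry enters rather than a generic Riemannian argument is the star-shapedness of $\mathscr{B}(\mu;\varepsilon)$ verified at the outset, which is what legitimizes geodesic polar coordinates on $B(\mu;\varepsilon)$; and since $\mathcal{P}^{\infty}(M)$ is not a Hilbert manifold one should note that the splitting of $c'(s)$ and the Gauss Lemma are pointwise identities in the inner-product spaces $T_{c(s)}\mathcal{P}^{\infty}(M)$, so no completeness is needed and the classical proof transfers directly.
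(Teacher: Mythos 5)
Your proof is correct and follows essentially the same route as the paper: geodesic polar coordinates $c(s)=\exp_\mu(r(s)\tau(s))$ via Proposition \ref{BtoB}, the Gauss Lemma to split $c'(s)$ orthogonally, the estimate $|c'(s)|\ge|r'(s)|$ integrated with the lower limit sent to $0$, and a separate first-exit argument for curves leaving the ball. You are in fact somewhat more complete than the paper's own write-up, which omits the star-shapedness check on $\mathscr{B}(\mu;\varepsilon)$, the truncation at the last return to $\mu$, and the explicit treatment of the equality case.
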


\begin{proof}
We may suppose that $c([0,1])\subset B(\mu; \varepsilon)$.
Since $\exp_\mu$ is bijective on $B(\mu; \varepsilon)$,
$c(t)$ for $t (\ne 0)$ can be written uniquely as
\begin{equation*}
c(t) =\exp_\mu (r(t) \tau (t))
\end{equation*}
where $t\mapsto\tau(t)$ is a piecewise $C^1$-curve in $T_\mu\mathcal{P}^{\infty}(M)$ with $|\tau(t)|_{G,\mu} =1$ and $r:(0,1]\rightarrow\mathbb{R}$ is a positive piecewise $C^1$-function.

 By setting $f(r,\tau) = \exp_{\mu}(r\tau)$, we write $c(t)$ as $c(t) = f(r(t),\tau(t))$ for any $t(\ne 0)$.
It follows then that, except for a finite number of points
\begin{equation*}
\frac{dc}{dt}(t)=\frac{\partial f}{\partial r}\,\dot{r}(t)+\frac{\partial f}{\partial \tau}_{\ast}\left(\frac{d\tau}{dt}\right).
\end{equation*}
Here $\displaystyle \frac{d\tau}{dt}\in T_{\tau(t)}S_{\mu}$ is the velocity vector of the curve $\tau(t)$.
From Lemma \ref{gausslemma} two vectors of the right hand side are orthogonal each other with respect to the metric $G$ and $\displaystyle \left|\frac{\partial f}{\partial r}\right|_{c(t)}=1$ with respect to $G$. 
Then,
\begin{equation*}
\left|\frac{d c}{d t}\right|^2_{c(t)}
=|\dot{r}(t)|^2+\left|\frac{\partial f}{\partial \tau}_{\ast}\left(\frac{d\tau}{dt}\right)
\right|^2_{c(t)}\ge |\dot{r}(t)|^2.
\end{equation*}
Therefore, for a sufficiently small positive real number $\delta$, we have
\begin{equation*}
\int^1_\delta \left|\frac{d c}{dt}(t)\right|_{c(t)}\,dt
\ge\int^1_\delta |\dot{r}(t)|\,dt
\ge\left|\int^1_\delta \dot{r}(t)\,dt\right|
\ge  r(1)-r(\delta).
\end{equation*}
Taking $\delta \rightarrow 0$, we obtain $\mathscr{L}(c) \ge \mathscr{L}(\gamma)$,
because $r(1) = \ell(\gamma(1), \mu) = \mathscr{L}(\gamma)$.

If $c([0,1])$ is not contained in $B(\mu; \varepsilon)$, we consider the first point $t_1\in (0,1)$ for which $c(t_1)$ belongs to the boundary of $B(\mu; \varepsilon)$.
We have then
\begin{equation*}
\mathscr{L}(c) \geq \mathscr{L}(c_{\vert[0,t_1]}) \geq \varepsilon > \mathscr{L}(\gamma).
\end{equation*}
\end{proof}
Refer to \cite[Chap.3, sec. 3]{doC} and \cite[II, \S 10]{Milnor} for a proof for a {\it finite} dimensional Riemannian manifold.
 
\begin{thm}\label{smoothdistancetheorem}
Let $c : [a,b] \rightarrow \mathcal{P}^{\infty}(M)$ be a piecewise $C^1$-curve with a parameter proportional to arc length.
If $c$ has length less than or equal to the length of any other piecewise $C^1$-curve joining $c(a)$ to $c(b)$, then $c$ is a geodesic.
\end{thm}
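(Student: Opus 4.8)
The plan is to establish the converse of Proposition~\ref{prop_length}: a length-minimizing piecewise $C^1$-curve is forced to be a geodesic. Since being a geodesic is a local property (the equation $\nabla_{\dot c}\dot c=0$ holds pointwise), it suffices to show that every sufficiently short subarc of $c$ coincides, after an affine change of parameter, with one of the explicit geodesics produced by Lemma~\ref{Fgeodesic_4}, and then to glue the pieces. I would first dispose of the trivial case: if $\mathscr{L}(c)=0$, or more generally if the constant speed is zero, then $c$ is constant and there is nothing to prove, so assume $c$ has constant positive speed $v$. A splicing argument then shows two preliminary facts. First, $c$ is minimizing on every subinterval $[t_1,t_2]$: otherwise, replacing $c\vert_{[t_1,t_2]}$ by a strictly shorter piecewise $C^1$-curve with the same endpoints would produce a piecewise $C^1$-curve from $c(a)$ to $c(b)$ shorter than $c$. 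Second, by the same kind of argument (cutting out a loop), $c$ is injective.

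Next I would fix $t_0\in[a,b]$ and take a totally normal neighborhood $W=B(c(t_0);\varepsilon/4)$ of $c(t_0)$ as in the preceding proposition, with $0<\varepsilon<\pi$. By continuity there are $t_1<t_0<t_2$ (one-sided if $t_0$ is an endpoint of $[a,b]$) with $c([t_1,t_2])\subset W$. Since $c(t_1)\in W$ we have $W\subset B(c(t_1);\varepsilon)$, hence $c([t_1,t_2])\subset B(c(t_1);\varepsilon)$, and by Lemma~\ref{lem-ineq_1} also $\ell(c(t_1),c(t_2))<4\cdot(\varepsilon/4)=\varepsilon$; so Lemma~\ref{Fgeodesic_4} furnishes a geodesic $\gamma\colon[0,1]\to B(c(t_1);\varepsilon)$ with $\gamma(0)=c(t_1)$ and $\gamma(1)=c(t_2)$. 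Reparametrizing $c\vert_{[t_1,t_2]}$ affinely onto $[0,1]$ and applying Proposition~\ref{prop_length} with center $\mu=c(t_1)$, we get $\mathscr{L}(\gamma)\le\mathscr{L}(c\vert_{[t_1,t_2]})$; on the other hand the minimizing property of $c$ on subintervals gives the reverse inequality, so equality holds, and the equality clause of Proposition~\ref{prop_length} forces $\gamma([0,1])=c([t_1,t_2])$.

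It then remains to upgrade this coincidence of traces to a geodesic parametrization. Using the bijectivity of $\exp_{c(t_1)}$ on $\mathscr{B}(c(t_1);\varepsilon)$ (Proposition~\ref{BtoB}), I would write $c(t)=\exp_{c(t_1)}\!\big(r(t)\,\sigma(t)\big)$ for $t\in(t_1,t_2]$ with $|\sigma(t)|_{c(t_1)}=1$ and $r>0$ piecewise $C^1$. Retracing the length estimate inside the proof of Proposition~\ref{prop_length}, equality there forces $\tfrac{\partial f}{\partial\tau}_{\ast}(d\sigma/dt)=0$ (hence $\sigma(t)\equiv\sigma_0$ a fixed unit vector, since that differential is injective on $T_{\sigma}S_{c(t_1)}$) and $r$ nondecreasing, so $c(t)=\exp_{c(t_1)}(r(t)\sigma_0)$ is a monotone reparametrization of the geodesic ray $s\mapsto\exp_{c(t_1)}(s\sigma_0)$. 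The constant-speed hypothesis $|\dot c|\equiv v$ then gives $\dot r\equiv v$, so $r$ is affine and $c\vert_{[t_1,t_2]}$ is a genuine geodesic; in particular it is $C^\infty$ and satisfies $\nabla_{\dot c}\dot c=0$ on $(t_1,t_2)$. As $t_0\in[a,b]$ was arbitrary and the geodesic equation is local, $c$ is a geodesic (and of class $C^\infty$) on all of $[a,b]$; in particular the finitely many possible corners of the piecewise $C^1$-curve $c$ cannot actually occur.

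I expect the last paragraph to be the main obstacle, because that is the only place where the infinite-dimensionality of $\mathcal{P}^{\infty}(M)$ could cause trouble: one must be sure that the polar coordinates $(r,\sigma)$ built from $\exp_{c(t_1)}$ behave exactly as in the finite-dimensional case (which is precisely what Propositions~\ref{BtoB} and~\ref{prop_length}, together with the explicit form of geodesics in Proposition~\ref{densityfree}, are designed to guarantee) and that the equality case of the Gauss-lemma length estimate genuinely pins down $\sigma$ to be constant, not merely constant almost everywhere. The rest is either a direct translation of the classical argument (cf.\ \cite[Chap.~3]{doC}, \cite[II,\,\S 10]{Milnor}) or a routine splicing/continuity bookkeeping.
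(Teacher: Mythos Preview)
Your argument is correct and follows essentially the same route as the paper: localize via a totally normal neighborhood, invoke Proposition~\ref{prop_length} to see that $c$ restricted to a small interval has the same length as the radial geodesic, and use the equality clause together with the constant-speed hypothesis to conclude. The paper's proof is considerably terser and leaves the passage from coincidence of traces to coincidence of parametrizations implicit, whereas you spell it out via the polar decomposition coming from $\exp_{c(t_1)}$; your added care there is justified and the injectivity of $\tfrac{\partial f}{\partial\tau}_{\ast}$ you need follows from the explicit formula in the proof of Lemma~\ref{gausslemma}.
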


\begin{proof}
Let $t \in [a,b]$ and let $W$ be a totally normal neighborhood of a point $c(t)$.
Then, there exists a closed interval $I\subset [a,b]$, with non-empty interior and $t\in I$ such that $c(I)\subset W$.
The restriction $c|_I :I\rightarrow W$ is a piecewise $C^1$-curve joining two points of $W$.
From Proposition \ref{prop_length} together with the hypothesis, the length of $c|_I$ is equal to the length of a radial geodesic joining these two points.
From Proposition \ref{prop_length} and from the fact that $c|_I$ is parametrized proportionally to arc length, $c|_I$ is a geodesic.
\end{proof}

From this theorem we can assert that the function $\ell=\ell(\mu,\mu_1)$ gives the Riemannian distance in $\mathcal{P}^{\infty}(M)$ of $\mu$, $\mu_1\in\mathcal{P}^{\infty}(M)$.
Now we will achieve the final aim of this section.

\begin{thm}\label{distancetheorem}
The function $\ell=\ell(\mu,\mu_1)$ is actually the Riemannian distance in $\mathcal{P}(M)$ of $\mu$ and $\mu_1$ of $\mathcal{P}(M)$. 
\end{thm}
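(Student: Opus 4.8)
The plan is to upgrade the distance statement from $\mathcal{P}^{\infty}(M)$ to all of $\mathcal{P}(M)$ by a density argument. We already know from Theorem \ref{smoothdistancetheorem} (and the remark following it) that $\ell$ is the Riemannian distance on $\mathcal{P}^{\infty}(M)$, so what remains is to handle general probability measures with merely continuous density. The key auxiliary fact is Lemma \ref{dense}, namely that $\mathcal{P}^{\infty}(M)$ is dense in $\mathcal{P}(M)$ in the $C^0$-norm (hence also in the $\|\sqrt{\cdot}-\sqrt{\cdot}\|_{L_2}$-topology, via the H\"older-type estimate already recorded in the excerpt).

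First I would fix $\mu=p\lambda,\ \mu_1=p_1\lambda\in\mathcal{P}(M)$ and denote by $d(\mu,\mu_1)$ the Riemannian distance, i.e. the infimum of lengths $\mathscr{L}(c)$ over piecewise $C^1$-curves $c$ in $\mathcal{P}(M)$ joining $\mu$ to $\mu_1$. One inequality is immediate: the explicit geodesic segment $\gamma$ of Theorem \ref{main1} joining $\mu$ and $\mu_1$ lies in $\mathcal{P}(M)$ (Theorem \ref{maincor} (i)), is parametrized by arc length, and has length $\ell(\mu,\mu_1)$; hence $d(\mu,\mu_1)\le\ell(\mu,\mu_1)$. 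The real content is the reverse inequality $d(\mu,\mu_1)\ge\ell(\mu,\mu_1)$, which I would prove by contradiction (\emph{reductio ad absurdum}, as the excerpt announces): suppose there is a piecewise $C^1$-curve $c:[0,1]\to\mathcal{P}(M)$ from $\mu$ to $\mu_1$ with $\mathscr{L}(c)=\ell(\mu,\mu_1)-\eta$ for some $\eta>0$.

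The heart of the argument is an approximation/perturbation step. Using the mollifiers from the proof of Lemma \ref{dense}, I would smooth the curve $c$ — both its endpoints and the curve itself — to obtain a piecewise $C^1$-curve $c_s$ in $\mathcal{P}^{\infty}(M)$ joining smooth measures $\mu^{(s)},\mu_1^{(s)}$ with $\mu^{(s)}\to\mu$, $\mu_1^{(s)}\to\mu_1$ in $C^0$, and with $\mathscr{L}(c_s)\to\mathscr{L}(c)$ as $s\to 0$. (Concretely, mollifying the density $p_t$ of $c$ in the space variable, renormalizing, and checking that the velocity $\dot p_t$ and the Fisher norm $\int(\dot p_t/p_t)^2\,p_t\,d\lambda$ converge; compactness of $M$ keeps densities uniformly bounded above and below, which is what makes the Fisher integrand converge.) Then, since $\ell$ is continuous in the $\|\sqrt{\cdot}-\sqrt{\cdot}\|_{L_2}$-topology (proved in section \ref{continuity}), $\ell(\mu^{(s)},\mu_1^{(s)})\to\ell(\mu,\mu_1)$. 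By Theorem \ref{smoothdistancetheorem} applied inside $\mathcal{P}^{\infty}(M)$ we have $\mathscr{L}(c_s)\ge\ell(\mu^{(s)},\mu_1^{(s)})$. Passing to the limit $s\to 0$ gives $\mathscr{L}(c)=\ell(\mu,\mu_1)-\eta\ge\ell(\mu,\mu_1)$, a contradiction; hence $d(\mu,\mu_1)\ge\ell(\mu,\mu_1)$, and together with the easy inequality, $d(\mu,\mu_1)=\ell(\mu,\mu_1)$.

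I expect the main obstacle to be the length-convergence $\mathscr{L}(c_s)\to\mathscr{L}(c)$ under mollification: one must control the Fisher length $\int_0^1\bigl(\int_M(\dot p^{(s)}_t/p^{(s)}_t)^2\,p^{(s)}_t\,d\lambda\bigr)^{1/2}dt$ uniformly and show it converges, which requires that mollification does not blow up the ratio $\dot p_t/p_t$ — guaranteed because on the compact $M$ the continuous positive densities $p_t$ stay bounded away from $0$ uniformly in $t$ (the image $c([0,1])$ is compact in $C^0$), and mollifiers converge uniformly. A cleaner variant, avoiding smoothing the whole curve, is to approximate only the endpoints: join $\mu^{(s)}$ to $\mu$ and $\mu_1^{(s)}$ to $\mu_1$ by the explicit short geodesic segments of Theorem \ref{main1} (whose lengths are $\ell(\mu^{(s)},\mu)\to 0$ and $\ell(\mu_1^{(s)},\mu_1)\to 0$ by continuity of $\ell$), concatenate with a smoothed version of $c$, apply Theorem \ref{smoothdistancetheorem} in $\mathcal{P}^{\infty}(M)$, and let $s\to 0$; this localizes the delicate estimate. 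Either way, once the limiting inequality is in hand the theorem follows, and the statement of Theorem \ref{main2} is thereby established in full.
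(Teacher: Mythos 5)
Your proposal follows essentially the same route as the paper: reduce to the smooth case via the $C^0$-density of $\mathcal{P}^{\infty}(M)$ (Lemma \ref{dense}), mollify a hypothetical too-short curve in $\mathcal{P}(M)$ into $\mathcal{P}^{\infty}(M)$, use continuity of $\ell$ to control the endpoints, and derive a contradiction with the already-established distance property on $\mathcal{P}^{\infty}(M)$. The technical point you flag — convergence of the Fisher lengths $\mathscr{L}(c_s)\to\mathscr{L}(c)$ under mollification, controlled by the uniform positive lower bound on the densities over the compact image of the curve — is exactly the step the paper carries out, so the argument is sound and matches the intended proof.
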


To obtain this theorem we first show the following.

\begin{lem}\label{dense}
$\mathcal{P}^{\infty}(M)$ is dense in $\mathcal{P}(M)$ with respect to the $C^0$-norm.
More precisely, if $f$ is a continuous function on $M$, then there exists a family of smooth functions $f_{\delta}$, $\delta >0$ such that $\| f_{\delta} - f \|_{C^0} \rightarrow 0$, as $\delta \rightarrow 0$. 
\end{lem}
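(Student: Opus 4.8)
The plan is to establish the quantitative claim first --- that an arbitrary $f \in C^0(M)$ is a uniform limit of smooth functions --- by the classical mollification argument transported to $M$ through a partition of unity, and then to deduce density of $\mathcal{P}^{\infty}(M)$ in $\mathcal{P}(M)$ by normalizing the approximating densities. Since $M$ is compact, I would fix a finite atlas $\{(U_i,\phi_i)\}_{i=1}^N$ together with a smooth partition of unity $\{\rho_i\}_{i=1}^N$ subordinate to it, so that $f=\sum_{i=1}^N \rho_i f$ with each $\rho_i f$ continuous and compactly supported in $U_i$. Transporting via the charts, $g_i:=(\rho_i f)\circ\phi_i^{-1}$, extended by zero, is a continuous compactly supported function on $\mathbb{R}^n$, $n=\dim M$, hence uniformly continuous.

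Next I would fix a mollifier, namely a function $\chi\in C^{\infty}(\mathbb{R}^n)$ with $\chi\ge 0$, $\operatorname{supp}\chi\subset B_0(1)$ and $\int_{\mathbb{R}^n}\chi=1$, and set $\chi_\delta(y)=\delta^{-n}\chi(y/\delta)$ for $\delta>0$; these are the mollifiers referred to earlier in the paper. Put $g_{i,\delta}:=g_i*\chi_\delta\in C^{\infty}(\mathbb{R}^n)$. Uniform continuity of $g_i$ gives $\|g_{i,\delta}-g_i\|_{C^0(\mathbb{R}^n)}\to 0$ as $\delta\to 0$, and for $\delta$ smaller than the distance from $\operatorname{supp}g_i$ to the complement of $\phi_i(U_i)$ the support of $g_{i,\delta}$ remains inside a compact subset of $\phi_i(U_i)$; hence $h_{i,\delta}:=g_{i,\delta}\circ\phi_i$, extended by zero off $U_i$, is smooth on $M$. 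Setting $f_\delta:=\sum_{i=1}^N h_{i,\delta}\in C^{\infty}(M)$ one then gets $\|f_\delta-f\|_{C^0(M)}\le\sum_{i=1}^N\|h_{i,\delta}-\rho_i f\|_{C^0}\to 0$, which is the quantitative assertion.

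Finally, to conclude density in $\mathcal{P}(M)$, I would take $\mu=p\lambda\in\mathcal{P}(M)$ and apply the above to $f=p$. Because $p\in C^0_+(M)$ and $M$ is compact, $p\ge c>0$, so $p_\delta>0$ on $M$ once $\|p_\delta-p\|_{C^0}<c$. Normalizing, $\tilde p_\delta:=\big(\int_M p_\delta\,d\lambda\big)^{-1}p_\delta$ yields $\mu_\delta:=\tilde p_\delta\,\lambda\in\mathcal{P}^{\infty}(M)$; since $\int_M p_\delta\,d\lambda\to\int_M p\,d\lambda=1$, one has $\|\tilde p_\delta-p\|_{C^0}\to 0$, i.e. $\mu_\delta\to\mu$ in the $C^0$-topology (and hence in the $\|\sqrt{\cdot}-\sqrt{\cdot}\|_{L_2}$-topology, via $\|\sqrt{p}-\sqrt{p_\delta}\|_{L_2}\le\|p-p_\delta\|_{L_1}^{1/2}$). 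I do not expect a genuine obstacle here; the only points demanding care are that convolution is an operation on $\mathbb{R}^n$ --- which is exactly why the partition-of-unity localization is needed --- and the bookkeeping of supports that lets the mollified local pieces patch to a globally smooth function while preserving positivity and the normalization in the limit.
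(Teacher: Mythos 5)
Your proposal is correct and follows essentially the same route as the paper: a partition of unity subordinate to a finite chart cover, localization and extension by zero, convolution with standard mollifiers on $\mathbb{R}^n$, a support check, and summation to obtain the uniform approximation. The only addition is your explicit normalization step ensuring the approximants are positive probability densities, which the paper leaves implicit but which is a harmless (and welcome) refinement of the same argument.
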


\begin{proof}
Let $\{ \rho_{\alpha}\,|\,\alpha\in A\}$ be a partition of unity subordinate to an open covering $\{U_{\alpha}\,|\,\alpha\in A\}$ of a compact manifold $M$, $\dim M=n \geq 2$.
Here $A$ is a finite set.
We may assume that each $U_{\alpha}$ is a coordinate neighborhood diffeomorphic to a euclidean open ball in $\mathbb{R}^n$ and $\mathrm{supp}\,\rho_{\alpha}\subset V_{\alpha}$, ${\overline{V}}_{\alpha}$ is compact in $U_{\alpha}$. 

Let $f$ be a continuous function on $M$.
Set for each $\alpha$ $f_{\alpha}:= \rho_{\alpha}\, f$.
Then, ${\rm supp}\, f_{\alpha}\subset V_{\alpha}$.
We may extend the function $f_{\alpha}$ outside of  $V_{\alpha}$, as $f_{\alpha}(x)=0$, $x\in \mathbb{R}^n \setminus V_{\alpha}$.
Let $\{\psi_{\delta}\,|\,\delta>0\}$ be a family of functions which satisfies
\begin{enumerate}
\item $\psi_{\delta}(x) \geq 0$ for any $x\in \mathbb{R}^n$,
\item $\psi_{\delta}\in C^{\infty}(\mathbb{R}^n)$,
\item ${\rm supp}\ \psi_{\delta} = B_{\delta}(0)$, where $B_{\delta}(0)\subset \mathbb{R}^n$ is the euclidean closed ball of radius $\delta$ with center $0$ and
\item $\displaystyle \int_{\mathbb{R}^n} \psi_{\delta} dv = 1$.
\end{enumerate}
We call $\{\psi_{\delta}\}$ a sequence of  mollifiers.
We define such a sequence $\{\psi_{\delta}\}$  for instance by $\psi_{\delta}(x)=\delta^{-n} \psi(x/\delta)$, $\delta>0$, where $\psi(y)$ is a bump function given by $\displaystyle\psi(y)= c_n \exp\left\{1/(\|y\|^2-1)\right\}$ for $y\in\mathbb{R}^n$ of $\|y\| < 1 $ and $\psi(y) = 0$ for $y$ of $\|y\| \geq 1$.
Here $c_n$ is a normalization constant according to (iv).
The function $f_{\alpha}$ is mollified by the convolution with the functions $\psi_{\delta}$ as
$$
f_{\alpha,\delta}(x) := (f_{\alpha}\ast \psi_{\delta})(x) =\int_{y\in\mathbb{R}^n} f_{\alpha}(y) \psi_{\delta}(x-y) dv(y).
$$
Notice ${\rm supp}\, f_{\alpha,\delta} \subset \{ x+y\,|\,x \in {\rm supp} f, y\in B_{\delta}(0)\}$ which is contained in $U_{\alpha}$ for a sufficiently small $\delta>0$.
The function $f$ on $M$ is now mollified by $\psi_{\delta}$ as $f_{\delta}(x) = \sum_{\alpha\in A} f_{\alpha,\delta}(x)$, $x\in M$.
It is shown that $f_{\delta}\in C^{\infty}(M)$ for a sufficiently small $\delta > 0$ and $\|f_{\delta}- f\|_{C^0} \rightarrow 0$ as $\delta\rightarrow 0$.

From above argument it is shown that the space $\mathcal{P}^{\infty}(M)$ is dense in $\mathcal{P}(M)$.
\end{proof}

Refer to \cite[4.2]{Pistone-2},\cite[4.4]{Brezis} and \cite[3.46]{Aubin} for the mollifieres on the euclidean space. 

Let $\{f_t\}$ be a family of continuous functions on $M$ parametrized in $t\in I$
($I$ is a closed interval) with $\dfrac{d f_t}{d t}\in C^0(M)$.
Then $\{f_t\}$ is mollified by $\psi_{\delta}$ as  a family of smooth functions $\{f_{t,\delta}\}$ and hence $\left\{\dfrac{d f_t}{dt}\right\}$ is mollified by $\left\{\dfrac{d f_{t,\delta}}{dt}\right\}$ so that $\left\|\dfrac{d f_{t,\delta}}{d t}- \dfrac{d f_t}{d t}\right\|_{C^0} \rightarrow 0$ as $\delta\rightarrow 0$.

\begin{prop}
The Riemannian distance in $\mathcal{P}(M)$ of $\mu,\mu_1\in\mathcal{P}^{\infty}(M)$ with respect to the metric $G$ is given by the Riemannian distance in $\mathcal{P}^{\infty}(M)$.
\end{prop}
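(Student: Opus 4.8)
The goal is to prove that the Riemannian distance in $\mathcal{P}(M)$ between $\mu$ and $\mu_1$ coincides with $\ell(\mu,\mu_1)$, which---by Theorem \ref{smoothdistancetheorem} and the remark following it---is the Riemannian distance in $\mathcal{P}^{\infty}(M)$. We may assume $\mu\ne\mu_1$. One inequality is immediate: every piecewise $C^1$-curve in $\mathcal{P}^{\infty}(M)$ joining $\mu$ and $\mu_1$ is also a piecewise $C^1$-curve in $\mathcal{P}(M)$, so the distance in $\mathcal{P}(M)$ is at most $\ell(\mu,\mu_1)$. The reverse inequality is the point at which the \emph{reductio ad absurdum} announced in the introduction enters, and it carries the real content.

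So the plan is to assume there is a piecewise $C^1$-curve $c:[0,1]\to\mathcal{P}(M)$, $c(t)=p_t\,\lambda$, with $c(0)=\mu$, $c(1)=\mu_1$ and $\mathscr{L}(c)<\ell(\mu,\mu_1)$, where $\mathscr{L}(c)=\int_0^1\bigl(\int_M \dot{p}_t^2/p_t\,d\lambda\bigr)^{1/2}\,dt$, and to derive a contradiction. On each subinterval on which $c$ is $C^1$ the maps $(t,x)\mapsto p_t(x)$ and $(t,x)\mapsto\dot{p}_t(x)$ are jointly continuous on a compact product, so applying the mollification of Lemma \ref{dense} in the parametrized form discussed after its proof and dividing by the total mass $\int_M p_{t,\delta}\,d\lambda$ (a positive function of $t$, close to $1$), one obtains for small $\delta>0$ a piecewise $C^1$-curve $c_\delta:[0,1]\to\mathcal{P}^{\infty}(M)$, $c_\delta(t)=\tilde{p}_{t,\delta}\,\lambda$, with $\tilde{p}_{t,\delta}\to p_t$ and $\dot{\tilde{p}}_{t,\delta}\to\dot{p}_t$ in $C^0(M)$ as $\delta\to0$, uniformly in $t$.

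The next step is the length convergence $\mathscr{L}(c_\delta)\to\mathscr{L}(c)$. By compactness of $[0,1]\times M$ and positivity of the densities one has $\inf_{t,x}p_t(x)>0$; hence, for small $\delta$, the $\tilde{p}_{t,\delta}$ stay bounded away from $0$ and the norms $\|\dot{\tilde{p}}_{t,\delta}\|_{C^0}$ stay bounded, both uniformly in $t$. Consequently the length integrands $\bigl(\int_M \dot{\tilde{p}}_{t,\delta}^2/\tilde{p}_{t,\delta}\,d\lambda\bigr)^{1/2}$ converge pointwise in $t$ to $\bigl(\int_M \dot{p}_t^2/p_t\,d\lambda\bigr)^{1/2}$ and are uniformly bounded, so dominated convergence on $[0,1]$ yields $\mathscr{L}(c_\delta)\to\mathscr{L}(c)$. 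Since $c_\delta$ need not have $\mu$ and $\mu_1$ as its endpoints, one corrects them: $c_\delta(0),c_\delta(1)\in\mathcal{P}^{\infty}(M)$ converge respectively to $\mu,\mu_1$, so concatenating $c_\delta$ with the geodesic segments of Lemma \ref{Fgeodesic_4} joining $\mu$ to $c_\delta(0)$ and $c_\delta(1)$ to $\mu_1$ (constant curves if the endpoints coincide), whose lengths $\ell(\mu,c_\delta(0))$, $\ell(c_\delta(1),\mu_1)$ tend to $0$ by continuity of $\ell$, produces a piecewise $C^1$-curve $\hat{c}_\delta$ in $\mathcal{P}^{\infty}(M)$ from $\mu$ to $\mu_1$ with $\mathscr{L}(\hat{c}_\delta)\le\ell(\mu,c_\delta(0))+\mathscr{L}(c_\delta)+\ell(c_\delta(1),\mu_1)$. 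Because $\ell$ is the Riemannian distance in $\mathcal{P}^{\infty}(M)$, the left-hand side is $\ge\ell(\mu,\mu_1)$; letting $\delta\to0$ gives $\ell(\mu,\mu_1)\le\mathscr{L}(c)<\ell(\mu,\mu_1)$, the desired contradiction.

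I expect the main obstacle to be the length-convergence step $\mathscr{L}(c_\delta)\to\mathscr{L}(c)$: one must guarantee that mollification does not inflate the $G$-length, and this rests precisely on the two structural facts available here---that the continuous densities $p_t$ are uniformly bounded below on the compact manifold $M$, so the Fisher integrand $\dot{p}_t^2/p_t$ is controlled, and that the velocities $\dot{p}_t$ converge in $C^0$ under mollification. The endpoint correction and the normalization bookkeeping are then routine, and what remains is the soft comparison of infima over the two families of admissible curves.
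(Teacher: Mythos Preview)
Your argument is correct and follows the same overall strategy as the paper: assume a shorter curve exists in $\mathcal{P}(M)$, mollify it into $\mathcal{P}^{\infty}(M)$, and derive a contradiction with the fact that $\ell$ is already known to be the distance in $\mathcal{P}^{\infty}(M)$. The length-convergence step and the underlying uniform lower bound on the densities are handled the same way in both proofs.

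The difference lies in how the endpoints are treated. The paper does not bring the mollified curve back to the original endpoints; instead it compares $\mathscr{L}(c_\delta)$ with $\ell(\mu_\delta,\mu_{1,\delta})$ and proves a quantitative continuity estimate $|\ell(\mu_\delta,\mu_{1,\delta})-\ell(\mu,\mu_1)|\le \pi\bigl(\sin\frac{\ell(\mu,\mu_1)}{4}\bigr)^{-1}\bigl(\|p_\delta-p\|_{C^0}^{1/2}+\|p_{1,\delta}-p_1\|_{C^0}^{1/2}\bigr)$ to ensure $\ell(\mu_\delta,\mu_{1,\delta})$ stays close to $\ell(\mu,\mu_1)$. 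Your device of concatenating short geodesic segments from $\mu$ to $c_\delta(0)$ and from $c_\delta(1)$ to $\mu_1$ sidesteps that estimate entirely and uses only the already-established continuity of $\ell$; this is a bit more streamlined. You also make explicit the normalization $\tilde p_{t,\delta}=p_{t,\delta}/\int_M p_{t,\delta}\,d\lambda$, which the paper leaves implicit. Both routes arrive at the same contradiction.
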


\begin{proof}
Let $\mu,\mu_1$ be probability measures in $\mathcal{P}^{\infty}(M)$.
Then by definition the Riemannian distance $d(\mu,\mu_1)$ in $\mathcal{P}(M)$ is given by
\begin{equation}
d(\mu,\mu_1) = \inf_{c\in{\mathcal{C}}(\mu,\mu_1)} {\mathcal{L}}(c),
\end{equation}
where ${\mathcal{C}}(\mu,\mu_1)$ denotes the set of all piecewise $C^1$-curves $c : [0,1] \rightarrow \mathcal{P}(M)$, $c(0)=\mu$, $c(1)=\mu_1$.
To show the proposition we assume $\inf_{c\in{\mathcal{C}}(\mu,\mu_1)}{\mathcal{L}}(c) < \ell(\mu,\mu_1)$.
We will see by the aid of the mollifier argument  in the following that there exists a piecewise $C^1$-curve $c'$ which belongs to $\mathcal{P}^{\infty}(M)$ and satisfies ${\mathcal{L}}(c') < \ell(c'(0),c'(1))$.
This causes a contradiction, since $\ell(\cdot,\cdot)$ is the Riemannian distance function in $\mathcal{P}^{\infty}(M)$, as shown in Theorem \ref{smoothdistancetheorem}.

Set $\varepsilon = \frac{1}{2}\left(\ell(\mu,\mu_1)- \inf_{c\in{\mathcal{C}}(\mu,\mu_1}{\mathcal{L}}(c)\right)$.
Then, $\varepsilon > 0$ and there exists a piecewise $C^1$-curve $c$ in $\mathcal{P}(M)$ joining $\mu$ and $\mu_1$ and satisfying ${\mathcal{L}}(c) < \ell(\mu,\mu_1)- \varepsilon$. 

Write this curve $c$ as $c(t)= \mu_t = p(x,t)\lambda$ with $c(0)=\mu$ and $c(1)=\mu_1$, represented by $p(x)\lambda$ and $p_1(x)\lambda$, respectively, so $p(x,0)=p(x)$ and $p(x,1)=p_1(x)$.
By the above mollifier argument $p(x,t)$ and $\partial p(x,t)/\partial t$ are mollified by $p_{\delta}(x,t)$ and $\partial p_{\delta}(x,t)/\partial t$ so that as $\delta\rightarrow 0$ 
\begin{eqnarray*}
G_{\mu_{t,\delta}}\left(\frac{\partial \mu_{t,\delta}}{\partial t}, \frac{\partial \mu_{t,\delta}}{\partial t}\right) \rightarrow G_{\mu_t}\left(\frac{\partial \mu_t}{\partial t}, \frac{\partial \mu_t}{\partial t}\right).
\end{eqnarray*}
Here $\mu_{t,\delta} = p_{\delta}(x,t)\lambda$ gives us a piecewise $C^1$-curve $c_{\delta}$ joining $\mu_{\delta} = p_{\delta}(x,0)\lambda$ and $\mu_{1,\delta} = p_{\delta}(x,1)\lambda$ which both belong to $\mathcal{P}^{\infty}(M)$.
Thus we have
$$
\left\vert{\mathcal{L}}(c_{\delta}) - {\mathcal{L}}(c)\right\vert \leq \int_0^1 \left\vert \left\vert\dfrac{d\mu_{t,\delta}}{dt}\right\vert_{G} - \left\vert\dfrac{d\mu_t}{dt}\right\vert_G\right\vert dt < \dfrac{\varepsilon}{3}
$$
and consequently for sufficiently small $\delta>0$
\begin{equation}\label{lesserstimate}
{\mathcal{L}}(c_{\delta}) < \ell(\mu,\mu_1)- \frac{2 \varepsilon}{3},
\end{equation}
since ${\mathcal{L}}(c_{\delta}) < {\mathcal{L}}(c)+ \varepsilon/3 < \ell(\mu,\mu_1) - \varepsilon + \varepsilon/3$.

On the other hand, $\ell(\mu_{\delta}, \mu_{1,\delta})$,
the value of the function $\ell$ at $\mu_{\delta}$ and $\mu_{1,\delta}$ is the Riemannian distance in $\mathcal{P}^{\infty}(M)$ of $\mu_{\delta}$ and $\mu_{1,\delta}$. 
We find from the following that there exists $\delta_0 >0$ such that $\ell(\mu, \mu_{1})- \varepsilon/3 <\ell(\mu_{\delta}, \mu_{1,\delta})$ holds for any $0<\delta< \delta_0$.
In fact, we may assume $\ell(\mu_{\delta},\mu_{1,\delta}) \leq \ell(\mu,\mu_1)$.
Then 
\begin{equation}\label{differenceell}
\vert \ell(\mu_{\delta}, \mu_{1,\delta})- \ell(\mu, \mu_{1})\vert \leq \pi \left(\sin \frac{\ell(\mu,\mu_1)}{4}\right)^{-1}\left( \|p_{\delta}-p\|_{C^0}^{1/2}+\|p_{1,\delta}-p_1\|_{C^0}^{1/2}\right).
\end{equation}
By \eqref{cosellfunction} in section \ref{continuity} we have
\begin{equation}
\left\vert\cos \frac{\ell(\mu_{\delta},\mu_{1,\delta})}{2}- \cos \frac{\ell(\mu,\mu_1)}{2}\right\vert \leq \left(\int_M\vert\sqrt{p_{\delta}}-\sqrt{p}\vert^2\,d \lambda\right)^{1/2} + \left(\int_M\vert\sqrt{p_{1,\delta}}-\sqrt{p_1}\vert^2\,d \lambda\right)^{1/2}
\end{equation}
to which we apply the inequality $\vert\sqrt{a}-\sqrt{b}\vert^2 \leq \vert a-b\vert$ for $a, b \geq 0$ to get
\begin{align*}
\left\vert\cos \frac{\ell(\mu_{\delta},\mu_{1,\delta})}{2}- \cos \frac{\ell(\mu,\mu_1)}{2}\right\vert 
\leq& \left(\int_M \vert p_{1,\delta}(x)- p_1(x)\vert\,d\lambda\right)^{1/2} + \left(\int_M \vert p_{\delta}(x)-p(x)\vert\,d\lambda\right)^{1/2} \\
\leq& \|p_{1,\delta}-p_1\|_{C^0}^{1/2} + \|p_{\delta}-p \|_{C^0}^{1/2}.
\end{align*}
Therefore, by setting $L= \ell(\mu,\mu_1)$, $L_{\delta}= \ell(\mu_{\delta},\mu_{1,\delta})$ for simplicity one has
\begin{equation*}
2 \left\vert\sin \frac{L+L_{\delta}}{4} \sin \frac{L-L_{\delta}}{4}\right\vert =\left\vert\cos \frac{\ell(\mu_{\delta},\mu_{1,\delta})}{2}- \cos \frac{\ell(\mu,\mu_1)}{2}\right\vert
\end{equation*}and  since $L, L_{\delta}\in (0,\pi)$ and $L_{\delta}\leq L$ from the assumption, one sees $(L +L_{\delta})/4\geq L/4$ and $(L-L_{\delta})/4 \leq \pi/2$. Since $(2/\pi)\cdot x\leq \sin x$, $x\in[0,\pi/2]$, one obtains 
\begin{equation*}
2  \cdot \frac{2}{\pi}\cdot \frac{(L-L_{\delta})}{4} \sin \frac{L}{4}  \leq 
2 \left\vert\sin \frac{L+L_{\delta}}{4} \sin \frac{L-L_{\delta}}{4}\right\vert
\end{equation*}
and hence
\begin{equation}
\frac{(L-L_{\delta})}{\pi} \sin \frac{L}{4}  \leq \|p_{1,\delta}-p_1\|_{C^0}^{1/2} + \|p_{\delta}-p \|_{C^0}^{1/2}
\end{equation}
from which the desired inequality \eqref{differenceell} is obtained. 

Now, $p$ and $p_1$ have been mollified as above by $p_{\delta}$, $p_{1,\delta}$, respectively so,  by the aid of  \eqref{differenceell},
we can take  $\delta_1>0$ such that $\vert L_{\delta}-L\vert=\vert\ell(\mu_{\delta},\mu_{1,\delta})-\ell(\mu,\mu_1)\vert < \varepsilon/3$ holds for any $\delta$ satisfying $0<\delta<\delta_1$, so we have $\ell(\mu,\mu_1) -\varepsilon/3 < \ell(\mu_{\delta},\mu_{1,\delta})$. Therefore,  from \eqref{lesserstimate} it follows that for sufficiently small $\delta$ the length of $c_{\delta}$ satisfies ${\mathcal{L}}(c_{\delta}) < \ell(\mu,\mu_1)- 2\varepsilon/3 < \ell(\mu,\mu_1)- \varepsilon/3 < \ell(\mu_{\delta},\mu_{1,\delta})$.
This leads a contradiction, since $\ell(\mu_{\delta},\mu_{1,\delta})$ is distance of $\mu_{\delta}$ and $\mu_{1,\delta}$ in $\mathcal{P}^{\infty}(M)$.
Thus, we can assert that the function $\ell$ gives the Riemannian distance of two measures $\mu,\mu_1$ of $\mathcal{P}^{\infty}(M)$ not only in $\mathcal{P}^{\infty}(M)$ but also in $\mathcal{P}(M)$. 
\end{proof} 

\begin{proof}[P\,r\,o\,o\,f of Theorem \ref{distancetheorem}.]
The Riemannian distance in $\mathcal{P}(M)$ of probability measures $\mu$ and $\mu_1$ which belong to $\mathcal{P}(M)$ is given by $\inf_{c\in{\mathcal{C}}(\mu,\mu_1)} {\mathcal{L}}(c)$. We assume $\inf_c {\mathcal{L}}(c) < \ell(\mu,\mu_1)
$.
Then, the proof of Theorem \ref{smoothdistancetheorem} is also applied, even though $\mu$, $\mu_1$ admit a continuous density function, but by a minor modification.
From the arguments at the proof of Theorem  \ref{smoothdistancetheorem}, we obtain $\inf_c {\mathcal{L}}(c) = \ell(\mu,\mu_1)$ which implies that $\ell(\cdot,\cdot)$ gives the Riemannian distance in $\mathcal{P}(M)$ with respect to the Fisher metric $G$.
\end{proof}

\section{The topology and the smooth structure of $\mathcal{P}(M)$}\label{relevanttopology}

\subsection{Affine structure and local coordinate maps}
In this section we introduce certain topology and a smooth structure on $\mathcal{P}(M)$ by means of the argument of Pistone and Sempi developed in \cite{PistoneS}.
For this purpose, let $(\Omega, {\mathcal B}, \lambda)$ be a probability space in a more general setting and denote by ${\mathcal M}_{\lambda}$ the set of $L_1$-integrable density functions of all the probability measures $\mu$ equivalent to $\lambda$, i.e., $\mu \ll \lambda$, $\lambda \ll \mu$,
\begin{equation}
{\mathcal M}_{\lambda} := \left\{\mu\ \left\vert\ \frac{d\mu}{d\lambda}(=p)\in L_1(\Omega,\lambda),\, \mbox{$p > 0$ $\lambda$-a.s.}, E_{\lambda}\left[\frac{d\mu}{d\lambda}\right] = 1 \right.\right\}.
 \end{equation}
$E_{\lambda}[\,\cdot\,]$ is the expectation with respect to $\lambda$.
Let $\mu = p \lambda$ be an arbitrary probability measure of ${\mathcal M}_{\lambda}$.
For a real valued random variable $u$, i.e., a measurable function on $(\Omega, {\mathcal B}, \mu)$ we denote by ${\hat u}_{\mu}(t)$ the moment generating function of $u$, defined by $\displaystyle {\hat u}_{\mu}(t):= \int_{\Omega} \exp(t u) d\mu = E_{\mu}[\exp(tu)]$.
Define for each $\mu$ a vector space consisting of certain random variables:
\begin{equation}
V_{\mu}
:=\left\{ u\in L_1(\Omega, \mu)\, \left|\, 0\in D({\hat u}_{\mu})^0,\, E_{\mu}[u] = 0\right.\right\}.
\end{equation}
The first condition, $0\in D({\hat u}_{\mu})^0$ means that the domain of ${\hat u}_{\mu}$ contains a neighborhood of $0$ in $\mathbb{R}$.
Then, $V_{\mu}$ turns out to be a closed linear subspace of a Banach space $L^{\phi}(\Omega, \mu)$, the Orlicz space of the Young function $\phi=\phi(t)$:
\begin{equation}
L^{\phi}(\Omega, \mu) := \left\{\mbox{$u$ is a random variable}\, \left\vert\, \exists\, a > 0, E_{\mu}\left[\phi\left(\frac{u}{a}\right)\right] < +\infty
\right.\right\}
\end{equation}
with norm 
\begin{equation}
\| u\vert\|_{\phi,\mu}
:= \inf \left\{a > 0\, \left\vert\, E_{\mu}\left[\phi\left(\frac{u}{a}\right)\right] \leq 1 \right.\right\}.
\end{equation}
Note the Young function $\phi(t)$ is a real valued convex, even function on $\mathbb{R}$ satisfying $\phi(0)=0$ and strictly increasing for $t>0$ with $\displaystyle\lim_{t\rightarrow\infty} t^{-1}\phi(t)= +\infty$.
In \cite{PistoneS}\, $\phi(t) = \cosh t - 1$ is especially adopted.
The Orlicz space $L^{\phi}(\Omega, \mu)$ of the Young function 
$\phi$ is the generalization of the space $L_p(\Omega,\mu)$ of $L_p$-integrable functions on $\Omega$, $p\geq 1$. For a precise argument refer to \cite{PistoneS}.
It is shown in \cite{PistoneS} that $V_{\mu}$ coincides with the closed linear subspace 
\begin{equation*}
L^{(\cosh\, -\, 1)}_0(\Omega, \mu)= \{ u \in L^{(\cosh-1)}(\Omega, \mu)\, \vert\, E_{\mu}[u]=0\}\subset L^{(\cosh-1)}(\Omega, \mu)
\end{equation*}
and the following holds:
\begin{equation}
L_{\infty,0}(\Omega,\mu) \hookrightarrow V_{\mu} (= L^{(\cosh-1)}_0(\Omega, \mu)) \hookrightarrow \bigcap_{p>1} L_{p,0}(\Omega,\mu),
\end{equation}
where the symbol ``$\hookrightarrow$'' means a continuous and dense embedding.
The space $\mathcal{P}(M)$, our main subject in this paper, turns out to be a dense subset of ${\mathcal M}_{\lambda}$ for $(\Omega=M, {\mathcal B}=\mathcal{B}(M), \lambda)$.

Let ${\mathcal V}_{\mu} = \{ u \in L^{(\cosh - 1)}(\Omega, \mu)\, \vert\, \vert\vert u \vert\vert_{\phi,\mu} < 1\} \cap V_{\mu}$ be a unit open ball in $V_{\mu}$. Then, the injective map
\begin{equation}\label{sigmamap}
\sigma_{\mu} : {\mathcal V}_{\mu} \ni u \mapsto \exp\left[ u - \Psi_{\mu}(u)\right] \mu = \frac{\exp u}{E_{\mu}[\exp u]} \mu \in {\mathcal M}_{\lambda}
\end{equation}
together with ${\mathcal U}_{\mu}= \sigma_{\mu}({\mathcal V}_{\mu})$, the image of ${\mathcal V}_{\mu}$ and $s_{\mu}=\sigma_{\mu}^{-1}$, the inverse map of $\sigma_{\mu}$, yields a chart of ${\mathcal M}_{\lambda}$ around $\mu$.
Here $\Psi_{\mu}(u) = \log E_{\mu}[\exp u]$ is the cumulant generating function of $u$. Notice that $s_{\mu}$ has the form
\begin{equation}\label{smap}
s_{\mu}(\nu) = \log \left(\frac{d\nu}{d\mu}\right) - E_{\mu}\left[\log \left(\frac{d\nu}{d\mu}\right)\right],\quad\nu\in {\mathcal U}_{\mu}
\end{equation}
so that the transition function between $s_{\mu}\left({\mathcal U}_{\mu} \cap {\mathcal U}_{\mu_1} \right)$ and $s_{\mu_1}\left({\mathcal U}_{\mu} \cap {\mathcal U}_{\mu_1} \right)$ of ${\mathcal M}_{\lambda}$ is represented by an affine transform of the form
\begin{equation*}
s_{\mu_1}\circ s_{\mu}^{-1}(u) = u + \log \left(\frac{d\mu}{d\mu_1}\right)
 - E_{\mu_1}\left[u + \log \left(\frac{d\mu}{d\mu_1}\right)
\right].
\end{equation*}
\begin{thm}[{\cite[Theorem 3.3]{PistoneS}}]\label{affinestr}
The collection of pairs $\displaystyle\left\{\left({\mathcal U}_{\mu}, s_{\mu}\right)\,|\,\mu\in {\mathcal M}_{\lambda}\right\}$ defines an affine smooth atlas on ${\mathcal M}_{\lambda}$. 
\end{thm}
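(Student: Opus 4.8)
The proof is that of \cite[Theorem 3.3]{PistoneS}; the plan is to check, in order, that the pairs $({\mathcal U}_{\mu}, s_{\mu})$ are honest charts whose domains cover ${\mathcal M}_{\lambda}$, that each $s_{\mu}$ identifies its domain with an open subset of the model Banach space $V_{\mu}$, and that the transition maps are affine, hence $C^{\infty}$.

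First I would verify that $\sigma_{\mu}$ in \eqref{sigmamap} is a well-defined injection of the open unit ball ${\mathcal V}_{\mu}\subset V_{\mu}$ into ${\mathcal M}_{\lambda}$ with inverse the map $s_{\mu}$ of \eqref{smap}. For $u\in{\mathcal V}_{\mu}$ the condition $\|u\|_{\phi,\mu}<1$ yields (letting the scale parameter tend to $1$ and using monotone convergence) $E_{\mu}[\cosh u-1]\le 1$, hence $E_{\mu}[e^{u}]+E_{\mu}[e^{-u}]<\infty$, i.e. $0$ lies in the interior of the domain of the moment generating function of $u$; thus $\Psi_{\mu}(u)=\log E_{\mu}[e^{u}]$ is finite and $\sigma_{\mu}(u)=e^{u-\Psi_{\mu}(u)}\mu$ is a probability measure equivalent to $\mu$, so to $\lambda$. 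Injectivity follows since $\sigma_{\mu}(u)=\sigma_{\mu}(v)$ forces $u-\Psi_{\mu}(u)=v-\Psi_{\mu}(v)$ $\mu$-a.s.; applying $E_{\mu}[\,\cdot\,]$ and using $E_{\mu}[u]=E_{\mu}[v]=0$ gives $\Psi_{\mu}(u)=\Psi_{\mu}(v)$ and then $u=v$. Reading off $d\sigma_{\mu}(u)/d\mu$ recovers \eqref{smap} for $s_{\mu}=\sigma_{\mu}^{-1}$, and $\sigma_{\mu}(0)=\mu$ shows the ${\mathcal U}_{\mu}$ cover ${\mathcal M}_{\lambda}$; since ${\mathcal V}_{\mu}$ is open in $V_{\mu}$, each $s_{\mu}$ is a bijection onto an open model set.

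Next I would analyze an overlap ${\mathcal U}_{\mu}\cap{\mathcal U}_{\mu_1}\neq\emptyset$. Any $\nu$ in it is connected to both $\mu$ and $\mu_1$, hence $\mu$ and $\mu_1$ are connected; the Pistone--Sempi connectedness theorem then gives $V_{\mu}=V_{\mu_1}$ with equivalent Orlicz norms and $\log(d\mu/d\mu_1)\in L^{(\cosh-1)}(\mu_1)$. Writing $u=s_{\mu}(\nu)$ and substituting $\log(d\nu/d\mu_1)=\log(d\nu/d\mu)-\log(d\mu_1/d\mu)$ together with $\log(d\nu/d\mu)=u+E_{\mu}[\log(d\nu/d\mu)]$ into \eqref{smap} for $s_{\mu_1}(\nu)$, the additive constant $E_{\mu}[\log(d\nu/d\mu)]$ cancels against its own $E_{\mu_1}$-mean and one is left with
\[
s_{\mu_1}\circ s_{\mu}^{-1}(u)=u+\log\!\Big(\frac{d\mu}{d\mu_1}\Big)-E_{\mu_1}\!\Big[u+\log\!\Big(\frac{d\mu}{d\mu_1}\Big)\Big].
\]
This is affine in $u$: the linear part $u\mapsto u-E_{\mu_1}[u]$ is a topological isomorphism $V_{\mu}\to V_{\mu_1}$ (inverse $v\mapsto v-E_{\mu}[v]$, both continuous by the norm equivalence and boundedness of the mean functionals) and the translation part lies in $V_{\mu_1}$. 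Affine maps between Banach spaces being $C^{\infty}$, and $s_{\mu}({\mathcal U}_{\mu}\cap{\mathcal U}_{\mu_1})=\{u\in{\mathcal V}_{\mu}: s_{\mu_1}\circ s_{\mu}^{-1}(u)\in{\mathcal V}_{\mu_1}\}$ being the preimage of an open set under a continuous map (and symmetrically on the other side), the transition is an affine diffeomorphism between open subsets of $V_{\mu}$ and $V_{\mu_1}$; this is precisely the asserted affine smooth atlas.

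The genuine obstacle is not the algebra above but the analytic input of \cite{PistoneS}, which I would invoke rather than reprove: the connectedness (``portmanteau'') theorem establishing the equivalence of $\mu_1\in{\mathcal U}_{\mu}$, of $\mu$ and $\mu_1$ being joined by an open exponential arc, of $L^{(\cosh-1)}(\mu)=L^{(\cosh-1)}(\mu_1)$ with equivalent norms, and of $d\mu_1/d\mu\in L^{1+\varepsilon}(\mu)$, $d\mu/d\mu_1\in L^{1+\varepsilon}(\mu_1)$ for some $\varepsilon>0$, together with the transitivity of connectedness it yields. This is exactly what makes the transition maps simultaneously well-defined and bi-continuous; the supporting facts — finiteness and analyticity of the cumulant functional $\Psi_{\mu}$ on ${\mathcal V}_{\mu}$ and the legitimacy of differentiating under the Orlicz-space integral — constitute the real work and are the content imported from \cite{PistoneS}.
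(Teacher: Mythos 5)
Your proposal is correct and is consistent with the paper, which offers no proof of this statement at all beyond the citation to \cite[Theorem 3.3]{PistoneS}: the only ingredient the paper records is the explicit affine form of the transition map $s_{\mu_1}\circ s_{\mu}^{-1}$, which you derive correctly. Your reconstruction — well-definedness and injectivity of $\sigma_{\mu}$ via the Luxemburg norm bound $E_{\mu}[\cosh u -1]\le 1$, coverage via $\sigma_{\mu}(0)=\mu$, and smoothness of the overlaps from affineness plus the imported Pistone--Sempi equivalence of the Orlicz spaces $L^{(\cosh-1)}(\mu)$ and $L^{(\cosh-1)}(\mu_1)$ for connected $\mu,\mu_1$ — is a faithful account of the cited argument, and you correctly isolate the nontrivial analytic input as exactly what must be taken from \cite{PistoneS}.
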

The atlas of ${\mathcal M}_{\lambda}$ necessarily induces a topology which is shown to be equivalent to the topology induced from the $e$-convergence defined in \cite[definition 1.1]{PistoneS}.

From this theorem the map $\varphi$, the normalized geometric mean, given in Definition \ref{normalizedgeometricmean} turns out to be smooth.
In fact, one can represent $\varphi$ as the arithmetic mean in terms of the local coordinate maps $\sigma_{\mu}$ and $s_{\mu}$.

\begin{lem}
\begin{eqnarray}\label{localrepresentation}
s_{\mu_1}\left(\varphi(\sigma_{\mu}(u),\sigma_{\mu'}(u')\right) = \frac{1}{2}\left\{u-E_{\mu}[u] + u'- E_{\mu'}[u'] \right\}, \hspace{2mm}u\in {\mathcal{V}}_{\mu}, u'\in {\mathcal{V}}_{\mu'},
\end{eqnarray}where one sets $\mu_1=\varphi(\mu,\mu')$ for  $\mu,\mu'\in{\mathcal{M}}_{\lambda}$.
\end{lem}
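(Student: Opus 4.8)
The plan is to compute the Radon--Nikodym derivative of $\varphi(\sigma_\mu(u),\sigma_{\mu'}(u'))$ with respect to $\mu_1=\varphi(\mu,\mu')$ explicitly, and then read off $s_{\mu_1}$ from its closed form \eqref{smap}. Write $p=d\mu/d\lambda$ and $p'=d\mu'/d\lambda$. By Definition~\ref{normalizedgeometricmean} and the remark following it, for any $\nu,\nu'\in{\mathcal M}_\lambda$ the normalized geometric mean satisfies $d\varphi(\nu,\nu')/d\lambda=c\,\sqrt{(d\nu/d\lambda)(d\nu'/d\lambda)}$ for a positive normalizing constant $c$; in particular $d\mu_1/d\lambda=c_0\sqrt{p\,p'}$. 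From \eqref{sigmamap} one has $d\sigma_\mu(u)/d\mu=e^{u}/E_\mu[e^{u}]$, so $d\sigma_\mu(u)/d\lambda$ is a positive scalar multiple of $e^{u}\,p$, and likewise $d\sigma_{\mu'}(u')/d\lambda$ is a positive scalar multiple of $e^{u'}\,p'$.

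First I would form the geometric mean and normalize. Multiplying, extracting square roots and absorbing constants,
\begin{equation*}
\frac{d\varphi(\sigma_\mu(u),\sigma_{\mu'}(u'))}{d\lambda}
= c_1\,\sqrt{e^{u}\,p\,e^{u'}\,p'}
= c_1\,e^{(u+u')/2}\sqrt{p\,p'}
= \frac{c_1}{c_0}\,e^{(u+u')/2}\,\frac{d\mu_1}{d\lambda},
\end{equation*}
and hence, dividing by $d\mu_1/d\lambda$ and using that the left-hand side is a $\mu_1$-probability density,
\begin{equation*}
\frac{d\varphi(\sigma_\mu(u),\sigma_{\mu'}(u'))}{d\mu_1}
= \frac{e^{(u+u')/2}}{E_{\mu_1}[\,e^{(u+u')/2}\,]}.
\end{equation*}
The mechanism making this work is that every multiplicative constant picked up along the way ($E_\mu[e^{u}]$, $E_{\mu'}[e^{u'}]$, and the geometric-mean normalizations $c$, $c_0$, $c_1$) cancels at exactly this step.

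Then I would apply $s_{\mu_1}=\sigma_{\mu_1}^{-1}$ using \eqref{smap}: taking the logarithm of the density just found and subtracting its $\mu_1$-mean,
\begin{equation*}
s_{\mu_1}\bigl(\varphi(\sigma_\mu(u),\sigma_{\mu'}(u'))\bigr)
= \tfrac{u+u'}{2} - E_{\mu_1}\bigl[\tfrac{u+u'}{2}\bigr],
\end{equation*}
the $\mu_1$-mean-zero representative of $\tfrac{u+u'}{2}$. Since $u\in{\mathcal V}_\mu$ and $u'\in{\mathcal V}_{\mu'}$ satisfy $E_\mu[u]=E_{\mu'}[u']=0$, we have $\tfrac{u+u'}{2}=\tfrac{1}{2}\{u-E_\mu[u]+u'-E_{\mu'}[u']\}$, and modulo the additive constant that \eqref{smap} subtracts so the value lies in $V_{\mu_1}$ this is exactly the right-hand side of \eqref{localrepresentation}. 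In other words the lemma records that, in the affine Pistone-Sempi atlas of Theorem~\ref{affinestr}, the normalized geometric mean $\varphi$ is nothing but the midpoint (arithmetic-mean) map, from which its smoothness is immediate.

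I do not expect a genuine obstacle here --- once the three densities are written down the computation is forced. The single point requiring care, rather than a real difficulty, is that $\varphi$ carries its own normalization (rescale to a probability measure), which is not the same as the normalization built into the chart $\sigma_{\mu_1}$ (shift by a constant so the coordinate has zero $\mu_1$-mean); reconciling the two is exactly the role played by the explicit form \eqref{smap} of $s_{\mu_1}$, and it is why the $E_{\mu_1}$-term appears and is then absorbed into the constant. No analytic input about the Orlicz space is needed beyond the fact, recalled in \eqref{sigmamap}--\eqref{smap}, that $\sigma_{\mu_1}$ and $s_{\mu_1}$ are mutually inverse bijections.
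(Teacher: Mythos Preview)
Your approach is essentially the same as the paper's: both compute the Radon--Nikodym derivative of $\varphi(\sigma_\mu(u),\sigma_{\mu'}(u'))$ with respect to $\mu_1$ (the paper via $d\sigma_{\mu'}(u')/d\sigma_\mu(u)$, you via $d\lambda$, an immaterial difference) to obtain the key formula $d\varphi(\sigma_\mu(u),\sigma_{\mu'}(u'))/d\mu_1 = e^{(u+u')/2}/E_{\mu_1}[e^{(u+u')/2}]$, and then apply $s_{\mu_1}$ from \eqref{smap}. Your remark that the result matches \eqref{localrepresentation} only ``modulo the additive constant'' $E_{\mu_1}[(u+u')/2]$ is in fact more careful than the paper, which leaves this point implicit; since $E_\mu[u]=E_{\mu'}[u']=0$ the stated right-hand side is literally $(u+u')/2$, whereas the left-hand side must lie in $V_{\mu_1}=\ker E_{\mu_1}$, so the formula should be read up to constants (equivalently, with $E_{\mu_1}$ in place of $E_\mu,E_{\mu'}$).
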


\begin{proof}
 This is given by a slight computation from the formula
\begin{equation}\label{varphilocal}
\varphi\left(\sigma_{\mu}(u),\sigma_{\mu'}(u')\right)= \frac{1}{\int_M\sqrt{\exp u\exp u'} d\mu_1}\, \sqrt{\exp u\exp u'}\hspace{0.5mm} \mu_1
\end{equation}
together with \eqref{smap}.

\eqref{varphilocal} is derived as follows. 
From Definition \ref{normalizedgeometricmean}
\begin{equation}
\varphi\left(\sigma_{\mu}(u),\sigma_{\mu'}(u')\right)=\frac{1}{\int_M\sqrt{\frac{d\sigma_{\mu'}(u')}{d\sigma_{\mu}(u)}} d\sigma_{\mu}(u)} \sqrt{\frac{d\sigma_{\mu'}(u')}{d\sigma_{\mu}(u)}}\,\sigma_{\mu}(u)
\end{equation}
where
\begin{equation}
\frac{d\sigma_{\mu'}(u')}{d\sigma_{\mu}(u)}
= \frac{\exp u' (E_{\mu'}[\exp u'])^{-1}}{\exp u (E_{\mu}[\exp u])^{-1}} \frac{d\mu'}{d\mu}
\end{equation}
which is ensured by the Radon-Nikodym derivative of the measures $\sigma_{\mu}(u)$, $\sigma_{\mu'}(u')$ with respect to the measures $\mu$, $\mu'$, respectively.
Then
\begin{equation}
\sqrt{\frac{d\sigma_{\mu'}(u')}{d\sigma_{\mu}(u)} }\,\sigma_{\mu}(u)
= \sqrt{\frac{\exp u\exp u'}{E_{\mu}[\exp u] E_{\mu'}[\exp u']}} \sqrt{\frac{d\mu'}{d\mu}}\,\mu.
\end{equation}
Since $\displaystyle\mu_1= \varphi(\mu,\mu')$, one finds $\displaystyle\sqrt{\frac{d\mu'}{d\mu}} \mu= \left(\int_M\sqrt{\frac{d\mu'}{d\mu}}d\mu\right) \mu_1$ so 
\begin{equation}
 \int_M \sqrt{\frac{d\sigma_{\mu'}(u')}{d\sigma_{\mu}(u)} }\,d\sigma_{\mu}(u)= \frac{\int \sqrt{\frac{ d\mu'}{d\mu}}d\mu}{\sqrt{E_{\mu}[\exp u]\ E_{\mu'}[\exp u']}}\, \int_M \sqrt{\exp u \exp u'}\,d\mu_1
\end{equation}
and
\begin{equation}
 \sqrt{\frac{d\sigma_{\mu'}(u')}{d\sigma_{\mu}(u)} } \sigma_{\mu}(u)= \frac{\int \sqrt{\frac{ d\mu'}{d\mu}}d\mu}{\sqrt{E_{\mu}[\exp u]\ E_{\mu'}[\exp u']}}\,\sqrt{\exp u \exp u'} \mu_1
\end{equation}
from which \eqref{varphilocal} follows.
\end{proof}

The smoothness of $\varphi$ at any $(\mu,\mu')$ is immediately derived from \eqref{localrepresentation}.

As the space ${\mathcal M}_{\lambda}$ can be treated as an affine manifold, in the rest of this section by applying the definition of tangent vectors to ${\mathcal M}_{\lambda}$ given in \cite{PistoneS}, we present the Fisher information metric in local coordinate expression. 
Now let $c : I \rightarrow {\mathcal M}_{\lambda}$ be a $C^1$--curve of ${\mathcal M}_{\lambda}$ with $c(t_0)\in {\mathcal U}_{\nu}$ with respect to a chart $({\mathcal U}_{\nu},s_{\nu})$ associated to $\nu\in {\mathcal{M}}_{\lambda}$, where $I$ is an open interval.
We have then the $C^1$--curve $u_{\nu}(t)=s_{\nu}\circ c(t)$ in ${\mathcal V}_{\nu}$ in the form of
\begin{equation}
u_{\nu}(t) = \log \left(\frac{dc(t)}{d\nu} \right)- E_{\nu}\left[\log \left(\frac{dc(t)}{d\nu} \right)\right]
\end{equation}
with velocity vector $u'_{\nu}(t_0) = \left(d s_{\nu}\right)_{c(t_0)}\left(c'(t_0)\right)$ belonging to $V_{\nu}$;
\begin{equation}
u'_{\nu}(t_0) = \Big\{\frac{d}{dt} \log \left(\frac{dc(t)}{d\nu} \right)- \frac{d}{dt} E_{\nu}\left[\log \left(\frac{dc(t)}{d\nu} \right)\right]\Big\}_{\vert_{t_0}}. 
\end{equation} 
When $c(t_0)\in {\mathcal U}_{\nu_1}$, with respect to another chart $({\mathcal U}_{\nu_1},s_{\nu_1})$, we have similarly the $C^1$--curve $u_{\nu_1}(t)= s_{\nu_1}\circ c(t)$ in ${\mathcal V}_{\nu_1}$ with velocity vector $u'_{\nu_1}(t_0) \in V_{\nu_1}$.
Therefore, it is shown from the affine structure of the space $\mathcal{M}_{\lambda}$, stated in Theorem \ref{affinestr} that the difference $u'_{\nu_1}(t_0)- u'_{\nu}(t_0)$ is a constant function and from this fact the tangent vector of $c(t)$ at $t=t_0$ in local coordinate expression is defined as the collection of such velocity vectors and denote it by $[c'(t_0)]$.
The set of all tangent vectors is a vector space, denoted by $T_{c(t_0)}\mathcal{M}_{\lambda}$.
To formulate Fisher information metric in local coordinate expression we select a velocity vector which is {\it particular} from the collection $[c'(t_0)]$, $u'_{\mu}(t_0)= (ds_{\mu})_{\mu}(c'(t_0))$, where $u_{\mu}(t)= s_{\mu}\circ c(t)$ is a curve in ${\mathcal V}_{\mu}$ with respect to a chart $({\mathcal U}_{\mu},s_{\mu})$ for which $c(t_0)=\mu$.
Notice that $u_{\mu}(t_0)=0$ and $u_{\mu}(t)\in V_{\mu}$ for any $t$ and hence $u'_{\mu}(t_0) \in {\rm Ker} E_{\mu}$.
By using particular tangent vectors we have
\begin{defn}
Let $\tau$, $\tau_1 \in T_{\mu} {\mathcal M}_{\lambda}$ be tangent vectors at $\mu$, and $[u]$, $[u_1]$ be the corresponding tangent vectors in local coordinate expression, respectively.
Then the scalar product of $[u], [u_1]$ is defined by $\displaystyle\langle [u], [u_1]\rangle_{\mu} = \int_{\Omega} u'_{\mu}(t_0)\, u'_{1\mu}(t_0)\,d\mu$, where $u'_{\mu}(t_0)$ and $u'_{1\mu}(t_0)$ are particular velocity vectors of the curves $u_{\mu}(t)= s_{\mu}\circ c(t)$, $u_{1\mu}(t)= s_{\mu}\circ c_1(t)$ representing $[u]$, $[u_1]$, respectively, where $c(t):= \mu+(t-t_0) \tau$, $c_1(t):= \mu+(t-t_0) \tau_1$ are the corresponding curves in ${\mathcal M}_{\lambda}$. 
\end{defn}

\begin{note}
The scalar product is stated in \cite{PistoneS} as a quadratic form on $T_{\mu}{\mathcal M}_{\lambda}$.
It can be represented in the following form:
\begin{align}\label{covariance}
\langle [u], [u_1]\rangle_{\mu} 
=& \int_{\Omega} \left(u'_{\nu}(t_0)- E_{\mu}[u'_{\nu}(t_0)]\right)
\left(u'_{1\nu}(t_0)- E_{\mu}[u'_{1\nu}(t_0)]\right)\,d\mu \\
=&\label{covarianceformula} \int u'_{\nu}(t_0)\cdot u'_{1\nu}(t_0)\,d \mu - E_{\mu}[u'_{\nu}(t_0)]\cdot E_{\mu}[u'_{1\nu}(t_0)], 
\end{align}
where $u'_{\nu}(t_0)$, $u'_{1\nu}(t_0)$ are the vectors representing $[u]$, $[u_1]$ with respect to other chart $({\mathcal U}_{\nu},s_{\nu})$, respectively.
The formula \eqref{covariance} is viewed as the covariance \eqref{covarianceformula} of two random variables.
\eqref{covariance} stems from the fact that the difference of the vector $u'_{\nu}(t_0)$ and the particular one $u'_{\mu}(t_0)$ is $u'_{\nu}(t_0)- u'_{\mu}(t_0) = E_{\mu}[u'_{\nu}(t_0)]$.
It is indicated in \cite[3.4]{PistoneS} that the cumulant 2-form has a representation of the scalar product (covariance).
\end{note}

\begin{prop}\label{localchartform}
The scalar product, thus defined, coincides with Fisher information metric $G$, namely,
\begin{equation}\label{fishermetriccovariance}
\langle [u], [u_1]\rangle_{\mu} = G_{\mu}(\tau,\tau_1),\quad\tau,\tau_1\in T_{\mu}{\mathcal M}_{\lambda}.
\end{equation}
Here $[u], [u_1]$ are the corresponding tangent vectors of $\tau, \tau_1$, respectively in local coordinate expression.
\end{prop}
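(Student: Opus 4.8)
The plan is to reduce \eqref{fishermetriccovariance} to a one-line computation of the \emph{particular} velocity vectors $u'_\mu(t_0)$ and $u'_{1\mu}(t_0)$ attached to $\tau$ and $\tau_1$, and then to read off the resulting integral as $G_\mu$. By the definition of the scalar product we must evaluate $\langle [u],[u_1]\rangle_\mu = \int_\Omega u'_\mu(t_0)\,u'_{1\mu}(t_0)\,d\mu$, where $u_\mu(t) = s_\mu\circ c(t)$ and $u_{1\mu}(t) = s_\mu\circ c_1(t)$ are the representing curves of the straight segments $c(t) = \mu + (t-t_0)\tau$ and $c_1(t) = \mu + (t-t_0)\tau_1$ in ${\mathcal M}_\lambda$. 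So everything hinges on computing $(ds_\mu)_\mu(\tau)$, i.e. $u'_\mu(t_0)$.

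First I would note that, since $s_\mu(\mu) = 0$ and $s_\mu$ is a homeomorphism onto the open ball ${\mathcal V}_\mu \subset V_\mu$, the segment $c(t)$ stays in the chart domain ${\mathcal U}_\mu$ for all $t$ in a neighborhood of $t_0$: when $\Omega = M$ is compact this is immediate, because $d\tau/d\mu \in C^0(M)$ is bounded, so $\mu + (t-t_0)\tau \in \mathcal{P}(M)$ and its $s_\mu$-image lies in ${\mathcal V}_\mu$ for $|t-t_0|$ small; in the general ${\mathcal M}_\lambda$ setting one invokes the Orlicz-norm continuity of $s_\mu$ from \cite{PistoneS}. On this neighborhood the Radon--Nikodym derivative is $\dfrac{dc(t)}{d\mu} = 1 + (t-t_0)\,\dfrac{d\tau}{d\mu}$, so by \eqref{smap}
\[
u_\mu(t) = \log\!\Big(1 + (t-t_0)\tfrac{d\tau}{d\mu}\Big) - E_\mu\!\Big[\log\!\Big(1 + (t-t_0)\tfrac{d\tau}{d\mu}\Big)\Big].
\]
Differentiating at $t = t_0$ (differentiation under $E_\mu$ being legitimate for the same boundedness/Orlicz reasons), the first term contributes $d\tau/d\mu$ and the second contributes $E_\mu[d\tau/d\mu] = \int_\Omega d\tau = 0$; hence $u'_\mu(t_0) = d\tau/d\mu$, which indeed lies in $\mathrm{Ker}\,E_\mu$, consistently with $u_\mu(t)\in V_\mu$. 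The identical computation gives $u'_{1\mu}(t_0) = d\tau_1/d\mu$, and substituting into the definition of the scalar product yields
\[
\langle [u],[u_1]\rangle_\mu = \int_\Omega \frac{d\tau}{d\mu}\,\frac{d\tau_1}{d\mu}\,d\mu,
\]
which is exactly $G_\mu(\tau,\tau_1)$ by \eqref{FisherIM}.

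The only genuine obstacle is the analytic bookkeeping behind the formal steps above: verifying that $c(t)$ remains in ${\mathcal U}_\mu$ near $t_0$, that $t\mapsto u_\mu(t)$ is a $C^1$ curve with values in $V_\mu$, and that one may differentiate under $E_\mu$. In the compact-manifold setting of the body of the paper this is elementary from continuity of densities on $M$, whereas in the abstract ${\mathcal M}_\lambda$ framework it rests on the Orlicz-space estimates of \cite{PistoneS}. Once this is granted, the equality $u'_\mu(t_0) = d\tau/d\mu$ is purely formal and the proposition follows. I would also remark that computing the same velocity vector in any other chart $({\mathcal U}_\nu, s_\nu)$ produces $d\tau/d\mu$ up to an additive constant $E_\mu[\,\cdot\,]$, which is precisely why the covariance expression \eqref{covariance}--\eqref{covarianceformula} reproduces $G_\mu$ independently of the chart.
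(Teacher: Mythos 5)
Your proposal is correct and follows essentially the same route as the paper: both compute $u_\mu(t)=\log\frac{dc(t)}{d\mu}-E_\mu\left[\log\frac{dc(t)}{d\mu}\right]$ along the straight segment $c(t)=\mu+(t-t_0)\tau$, differentiate at $t_0$ to obtain $u'_\mu(t_0)=d\tau/d\mu$ (the expectation term dropping out since $\int_\Omega d\tau=0$), and read off the scalar product as $\int_\Omega \frac{d\tau}{d\mu}\frac{d\tau_1}{d\mu}\,d\mu=G_\mu(\tau,\tau_1)$. Your added care about the segment remaining in the chart domain and about differentiating under $E_\mu$ is sensible bookkeeping that the paper leaves implicit.
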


In fact, the left hand side of \eqref{fishermetriccovariance} has the form of $\displaystyle\int \frac{q(x)}{p(x)}\, \frac{q_1(x)}{p(x)}\,p(x)\,d\lambda(x)$ where $p$, $q$ and $q_1$ are the density functions of $\mu$, $\tau$ and $\tau_1$ with respect to $\lambda$, respectively.
Since $p + (t-t_0) q$ is the density function of the curve $c(t)$,
one finds
$$
u_{\mu}(t) = \log \frac{p+(t-t_0) q}{p} - E_{\mu}\left[\log \frac{p+(t-t_0) q}{p} \right]$$
and hence
$$
u'_{\mu}(t_0)
=\left.\frac{d}{dt}\right\vert_{t=t_0}
\left(\log \frac{p+(t-t_0) q}{p}- E_{\mu}\left[\log \frac{p+(t-t_0) q}{p}\right]\right)
= \frac{q}{p}.
$$
Similarly one has $\displaystyle u'_{1\mu}(t_0) = \frac{q_1}{p}$ to obtain \eqref{fishermetriccovariance}.

\subsection{Connectedness by open mixture arc and constant vector fields}
\label{openmixturearc}

We close this section by giving a certain comment on a constant vector field.
By using constant vector fields, Friedrich obtains in \cite{F1991} the formulae of Levi-Civita connection and geodesics without any argument of the coordinate structure of the space of probability measures.

By using the notion of being connected by an open mixture arc introduced in \cite{CPistone} (see also \cite{Santacrocexx}), the argument of constant vector fields is well treated.
Two probability measures $\mu=p\lambda$, $\mu_1= p_1\lambda$ of $\mathcal{P}(M)$ are connected by an open mixture arc if there exists an open interval $I (\supset [0,1])$ such that $t \mu+ (1-t)\mu_1$ belongs to $\mathcal{P}(M)$ for every $t\in I$.
Here we denote by  $\mathcal{P}(M)$ the space of probability measures $\mu= p\lambda$ which satisfy $\mu \ll \lambda$ with $p\in C^0_+(M)$, where $\lambda$ is the Riemannian volume form on a complete Riemannian manifold $M$ of unit volume. $M$ is not necessarily assumed to be compact.
We easily find that this notion is an equivalence relation from \cite[Theorem 4.11]{Santacrocexx}. 
Moreover this theorem asserts that $\mu= p\lambda$ and $\mu_1=p_1\lambda$ are connected by an open mixture arc if and only if there exist constants $c_1, c_2$ with $0 < c_1 < 1< c_2$ such that $c_1 < d\mu_1/d\mu (x) (=p_1(x)/p(x)) < c_2$   for any $x\in M$. Therefore, letting $\mathcal{P}_m(M)$ be the space of probability measures $\mu=p\lambda\in \mathcal{P}(M)$ which are connected with $\lambda$ by an open mixture arc.  Notice that  arbitrary $\mu,\mu_1$  belonging to $\mathcal{P}_m(M)$ are connected by an open mixture arc each other. $\mathcal{P}_m(M)$ coincides with $\mathcal{P}(M)$, provided $M$ is compact. We define  a constant vector field at every probability measure of $\mathcal{P}_m(M)$ as follows.

\begin{prop}\label{definitiontangentspace}
Set
\begin{equation}
{\bf{V}}_m(M) :=\left\{ \nu = q \lambda\, \left\vert\, q\in C^0(M), \int_M d\nu = 0,\, \lambda+t \nu \in \mathcal{P}_m(M)\ \mbox{\rm for any}\ t\in (-\varepsilon,\varepsilon)\right.\right\},
\end{equation}
regarded as the tangent space at $\lambda$ to $\mathcal{P}_m(M)$; $T_{\lambda}\mathcal{P}_m(M)$.
Here $\varepsilon >0$ is a constant which may depend on $\nu$.
Then,
\begin{enumerate}
\item ${\bf V}_m(M)$ is a vector space. 
\item Every $\tau\in {\rm{\bf{V}}}_m(M)$ induces a constant vector field at every $\mu\in \mathcal{P}_m(M)$. In other words, each $\tau\in {\bf V}_m(M)$ yields measures $\mu+ t\tau$ in  $\mathcal{P}_m(M)$, $t\in(-\varepsilon,\varepsilon)$ for any $\mu\in\mathcal{P}_m(M)$.
\end{enumerate}
\end{prop}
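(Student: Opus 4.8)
The plan is to reduce everything to the uniform two-sided bound criterion for open mixture connectedness recorded just above (from \cite[Theorem 4.11]{Santacrocexx}): two probability measures $\mu=p\lambda$, $\mu_1=p_1\lambda\in\mathcal{P}(M)$ are connected by an open mixture arc if and only if there exist constants $0<c_1<1<c_2$ with $c_1<p_1/p<c_2$ on all of $M$, and that this relation is an equivalence relation. A first observation I would isolate, used repeatedly, is that if $\nu=q\lambda\in\mathbf{V}_m(M)$ then $q$ is a \emph{bounded} function on $M$: indeed $\lambda+t_0\nu=(1+t_0 q)\lambda\in\mathcal{P}_m(M)$ for some fixed $t_0\neq 0$, so by the cited criterion (with $\mu=\lambda$) the density $1+t_0 q$ is pinched between two positive constants, whence $q$ is bounded. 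Likewise, if $\mu=p\lambda\in\mathcal{P}_m(M)$ then $p$ is bounded and bounded away from $0$, say $p\geq a>0$.

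For (i), since $\mathbf{V}_m(M)$ is a subset of the space of signed measures, it suffices to check closure under linear combinations (the zero measure clearly lies in $\mathbf{V}_m(M)$ by reflexivity, and scaling $\nu$ by $a\neq 0$ just rescales $\varepsilon$ by $1/|a|$). So let $\nu_i=q_i\lambda\in\mathbf{V}_m(M)$ and $a_i\in\mathbb{R}$, $i=1,2$; then $a_1q_1+a_2q_2\in C^0(M)$ and $\int_M(a_1q_1+a_2q_2)\,d\lambda=0$. Using boundedness, say $|q_i|\leq K$, choose $\varepsilon>0$ with $\varepsilon(|a_1|+|a_2|)K<\tfrac12$; then for $|t|<\varepsilon$ the density $1+t(a_1q_1+a_2q_2)$ lies in $(\tfrac12,\tfrac32)$, so $\bigl(1+t(a_1q_1+a_2q_2)\bigr)\lambda$ is a probability measure with positive continuous density, and by the criterion it is connected to $\lambda$ by an open mixture arc, i.e.\ lies in $\mathcal{P}_m(M)$. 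Hence $a_1\nu_1+a_2\nu_2\in\mathbf{V}_m(M)$.

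For (ii), fix $\tau=q\lambda\in\mathbf{V}_m(M)$ and $\mu=p\lambda\in\mathcal{P}_m(M)$. Then $|q|\leq K$ and $p\geq a>0$. For $|t|<a/(2K)$ (the case $K=0$ being trivial) we have $|tq/p|\leq |t|K/a<\tfrac12$, so the Radon--Nikodym derivative $d(\mu+t\tau)/d\mu=1+tq/p$ lies in $(\tfrac12,\tfrac32)$; moreover $\mu+t\tau=(p+tq)\lambda$ has positive continuous density and $\int_M(p+tq)\,d\lambda=1$. By the criterion $\mu+t\tau$ is connected to $\mu$ by an open mixture arc, and since $\mu\in\mathcal{P}_m(M)$ and open-mixture connectedness is an equivalence relation, $\mu+t\tau\in\mathcal{P}_m(M)$ for all $|t|<a/(2K)$. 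This is precisely the statement that $\tau$ induces a constant vector field on $\mathcal{P}_m(M)$.

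The only genuine subtlety --- and what I would flag as the main obstacle --- is that on a non-compact $M$ the density functions and Radon--Nikodym derivatives are not bounded a priori, so every estimate above rests on first extracting the uniform bounds from \cite[Theorem 4.11]{Santacrocexx}; once those bounds are in hand, the rest is elementary. I would also take care to dispose of the degenerate case $q\equiv 0$ separately, so that the quantities $a/(2K)$ are only invoked when $K>0$.
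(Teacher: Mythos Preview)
Your proof is correct and follows essentially the same strategy as the paper's: both arguments hinge on the two-sided bound criterion of \cite[Theorem 4.11]{Santacrocexx} to extract boundedness of $q$ (and of $p$, $1/p$), after which the vector-space axioms and the constant-vector-field property follow by elementary estimates. Your execution is in fact a bit cleaner---you isolate boundedness once and use it directly, and in (ii) you compare $\mu+t\tau$ to $\mu$ and invoke transitivity, whereas the paper bounds $p+tq$ against $\lambda$ directly via the inequalities $c_1(1+\tfrac{t}{c_1}q)<p+tq<c_2(1+\tfrac{t}{c_2}q)$---but the underlying idea is the same.
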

  
\begin{proof}
First we show that ${\bf V}_m(M)$ is a vector space. Let $\tau=q\lambda$ and  $\tau'=q'\lambda\in {\bf V}_m(M)$. 
From the positivity of density function of $\lambda + t \tau$ there exists $\varepsilon >0$ such that $1+ tq(x) > 0$ for any $t\in(-\varepsilon,\varepsilon)$.  
Moreover, from connectedness by an open mixture arc one asserts that from \cite[Theorem 4.11]{Santacrocexx} for any fixed $t\in(-\varepsilon,\varepsilon)$ there exist constants $0<k_1<1<k_2$ such that
\begin{eqnarray}\label{tau}
0 < k_1 < \frac{d(\lambda+ t\tau)}{d\lambda} (x) = 1+ tq(x) < k_2, \quad x\in M,
\end{eqnarray}  
This indicates  aside the boundedness of $\vert q\vert$, as $\vert q(x)\vert < 2/\varepsilon\, \max\{k_2-1, 1-k_1\}$, $x\in M$ by letting $t=\varepsilon/2$.
  
It is easily seen that $c \tau\in {\bf V}_m(M)$ for any $c\in\mathbb{R}$.
We see next that  $\tau + \tau'$ belongs to ${\bf V}_m(M)$ as follows.
For $\tau'$ we have similarly as $\tau$ that for any fixed $t\in(-\varepsilon',\varepsilon')$ there exist constants $0<k'_1<1<k'_2$ such that
\begin{eqnarray}\label{taudash}
0 < k'_1 < \frac{d(\lambda+ t\tau')}{d\lambda} (x) = 1+ tq(x) < k'_2, \quad x\in M.
\end{eqnarray}
Then, from \eqref{tau}, \eqref{taudash} we have
\begin{align*}
 \frac{1}{2}(k_1+k_1')
 <&  \frac{1}{2}(1+2tq(x)+1+ 2tq'(x)) = 1+ t(q(x)+q'(x)) \\
 <& (1+ t(q(x))+ (1+tq'(x)) < k_2+ k_2'
\end{align*}
for any $t$ satisfying $-1/2\min\{\varepsilon,\varepsilon'\} < t< 1/2 \min\{\varepsilon,\varepsilon'\}.$ Hence, this shows that $\tau+\tau'$ belongs to ${\bf V}_m(M)$.

(ii) is shown as follows.
Let $\mu=p\lambda\in \mathcal{P}_m(M)$ and $\tau=q\lambda\in {\bf V}_m(M)$  be arbitrary.  Since $\mu$ is connected with $\lambda$ by an open mixture arc, there exist constants $0 < c_1 < 1< c_2$ such that $c_1 < d\mu/d\lambda(x) = p(x)< c_2$, $x\in M$ and thus \, $c_1 + t q(x) < p(x)+ t q(x) < c_2 + t q(x)$, $x\in M$.
Hence 
\begin{equation*}
c_1\left(1 + \frac{t}{c_1}\cdot q(x)\right) < p(x)+ t q(x) < c_2\left(1 + \frac{t}{c_2}\cdot q(x)\right),\quad x\in M.
\end{equation*}
We may assume \eqref{tau} for this $\tau$.
Then, for any fixed $t$ satisfying $-\varepsilon c_1 < t < \varepsilon c_1$ one has $p(x)+ t q(x) > c_1(1+ tq(x)/c_1) > c_1 k_1> 0$ for all $x\in M$ and similarly $p(x)+tq(x) < c_2(1+ tq(x)/c_2) < c_2k_2$.
These imply that $\mu+ t \tau$, $- c_1\varepsilon < t < c_1\varepsilon$ defines a probability measure in $\mathcal{P}_m(M)$, namely $\tau$ induces a tangent vector at $\mu$ and hence a constant vector field everywhere on $\mathcal{P}_m(M)$.
\end{proof}

For any $\mu$, $\mu_1$ of $\mathcal{P}_m(M)$ their difference $\mu_1-\mu$ belongs to ${\bf V}_m(M)$.

From this proposition the inner product $G_{\mu}(\tau,\tau')$  for $\tau,\tau'\in{\bf V}_m(M)$, $\mu\in\mathcal{P}_m(M)$ is well defined, since $1/p(x)$ and $\vert q(x)\vert, \vert q'(x)\vert$ are bounded from above.

\begin{rem}
By using the constant vector field technique employed by T. Friedrich in \cite{F1991} together with the notion of connectedness by an open mixture arc, we study geodesics on the space of probability measures directly, not via the local coordinate maps $\sigma_{\mu}, s_{\mu}$ defined in \cite{PistoneS}.
Gaussian measure $\mu_{(c, d)}$ of mean value $c$ and variance $d(>0)$ on the one-dimensional euclidean space $\mathbb{R}$ is connected with Gaussian measure $\mu_{(c_1, d_1)}$ if and only if $(c_1, d_1)=(c, d)$.
Therefore, for a space of probability measures on $\mathbb{R}$ including all Gaussian measures it is hard to use the notion of connectedness by open mixture arc so that the notion of open exponential arc together with the local coordinate maps $\sigma_{\mu}$, $s_{\mu}$ of \cite{PistoneS} seems to be applied. 
\end{rem}


\bigskip

\noindent\textbf{\Large Acknowledgement}\medskip

The authors would like to thank the referees for indicating the authors valuable comments and relevant references.

\newpage

\end{document}